\newtheorem{thm}{Theorem}
\newtheorem{cor}[thm]{Corollary}
\newtheorem{lem}[thm]{Lemma}
\newtheorem*{thm*}{Theorem}
\newtheorem{prop}[thm]{Proposition}
\theoremstyle{definition}
\newtheorem{dfn}[thm]{Definition}
\theoremstyle{remark}
\newtheorem{rmk}[thm]{Remark}
\newtheorem{example}[thm]{Example}
\numberwithin{equation}{section}
\numberwithin{thm}{section}
\newcommand{\NN}{\mathbb{N}}
\newcommand{\ZZ}{\mathbb{Z}}
\newcommand{\CC}{\mathbb{C}}
\newcommand{\FF}{\mathbb{F}}
\newcommand{\PP}{\mathbb{P}}
\def\Tt{\mathcal{T}}
\def\Kk{\mathcal{K}}
\def\Ii{\mathcal{I}}
\def\Ii{\mathcal{I}}
\newcommand*{\nb}{\nobreakdash}
\newcommand{\Cst}{\mathrm{C}^*}
\newcommand{\maxt}{\mathrm{max}}
\newcommand{\mint}{\mathrm{min}}
\newcommand{\clsp}{\overline{\operatorname{span}}}
\newcommand{\lsp}{\operatorname{span}}
\newcommand{\id}{\operatorname{id}}
\begin{document}

\author[Astrid an Huef]{Astrid an Huef}
\author[Brita Nucinkis]{Brita Nucinkis}
\author[Camila F. Sehnem]{Camila F. Sehnem}
\author[Dilian Yang]{Dilian Yang}

\address{School of Mathematics and Statistics, Victoria University of Wellington, P.O. Box 600, Wellington 6140, New Zealand.}
\email{astrid.anhuef@vuw.ac.nz}

\address{Department of Mathematics, Royal Holloway, University of London, Egham, TW20 0EX, UK.}
\email{brita.nucinkis@rhul.ac.uk}

\address{School of Mathematics and Statistics, Victoria University of Wellington, P.O. Box 600, Wellington 6140, New Zealand.}
\email{camila.sehnem@vuw.ac.nz}

\address{Department of Mathematics and Statistics, University of Windsor, 401 Sunset Avenue, Windsor, Ontario N9B 3P4, Canada.}
\email{dyang@uwindsor.ca}

\title[Nuclearity of semigroup $\mathbf{\Cst}$-algebras]{Nuclearity of semigroup $\mathbf{\Cst}$-algebras}

\keywords{Toeplitz algebra, semigroup, weak quasi-lattice, nuclearity, amenability, Baumslag--Solitar group}
\subjclass[2010]{46L05}

\begin{abstract} We study the semigroup $\Cst$\nb-algebra of a positive cone $P$ of a weakly quasi-lattice ordered group. That is, $P$ is a subsemigroup of a discrete group~$G$ with $P\cap P^{-1}=\{e\}$ and such that any two elements of $P$ with a common upper bound in~$P$ also have a least upper bound. We find sufficient conditions for the semigroup $\Cst$\nb-algebra of~$P$ to be nuclear. These conditions involve the idea of a generalised length function, called a ``controlled map'',  into an amenable group. Here we give a new definition of a controlled map and discuss examples from different sources. We apply our main result to establish nuclearity for semigroup $\Cst$\nb-algebras of a class of one-relator semigroups, motivated by a recent work of Li, Omland and Spielberg. This includes all the Baumslag--Solitar semigroups. We also analyse semidirect products of weakly quasi-lattice ordered groups and use our theorem in examples to prove nuclearity of the semigroup $\Cst$\nb-algebra. Moreover, we prove that the graph product of weak quasi-lattices is again a weak quasi-lattice, and show that the corresponding semigroup $\Cst$\nb-algebra is nuclear when the underlying groups are amenable.
\end{abstract}

\date{October 10, 2019; with minor revisions September 4, 2020}
\thanks{This research was initiated during  the project-oriented workshop ``Women in Operator Algebras'' funded and hosted by the Banff International Research Station. A.~an Huef was supported by  the  Marsden Fund of the Royal Society of New Zealand (grant number 18-VUW-056). Part of this work was completed while C. F. Sehnem was a post-doctoral fellow at Universidade Federal de Santa Catarina supported by CAPES/PrInt (Brazil), through 88887.370650/2019-00.
D.~Yang was partially supported by an NSERC Discovery Grant (grant number 808235). }

\maketitle

\section{Introduction} $\Cst$\nb-algebras associated to semigroups provide a rich class of examples of $\Cst$\nb-alge\-bras. There are interesting connections of these algebras with number theory \cite{CDL, LR2, Li-TAMS} that were sparked by Cuntz's study of 
the $\Cst$\nb-algebra of an $ax+b$\nb-semigroup over the natural numbers~\cite{Cuntz}. See also \cite{BruLi} for recent developments in this subject. Further interesting examples include $\Cst$\nb-algebras of positive cones of right-angled Artin groups \cite{CL1, CL2, ELR} and of Baumslag--Solitar groups  \cite{CaHR, SBS}, $\Cst$\nb-algebras of self-similar actions \cite{BRRW, LY}
 and least common multiple (LCM) semigroups \cite{BLS, BLS2, Starling}.

In \cite{Li-JFA}, Li introduced semigroup $\Cst$\nb-algebras that generalise the ones of Nica attached to quasi-lattice ordered groups. Li's construction takes into account the structure of constructible right ideals of the underlying semigroup. These are all the principal ideals together with the emptyset if~$P$ is a left-cancellative and right LCM semigroup. In this case, the full semigroup $\Cst$\nb-algebra $\Cst(P)$ is universal for a class of isometric representations of~$P$ satisfying a relation called Nica covariance. The reduced semigroup $\Cst$\nb-algebra $\Cst_r(P)$ is then generated by the left-regular representation of $P$ on $\ell^2(P)$.

In this paper, we study the $\Cst$\nb-algebras of semigroups that are positive cones of weakly quasi-lattice ordered groups. These sit between quasi-lattice ordered groups and right LCM semigroups. Following Nica, we carry the group in our notation, and write $\Cst(G,P)$ for the semigroup $\Cst$\nb-algebra associated to the positive cone~$P$ of the weak quasi-lattice order $(G,P)$. The group is crucial in our proofs and this notation helps to make our assumptions clear throughout the text, but we observe that $\Cst(G,P)$ depends only on~$P$. 

As in \cite{N}, we say that $(G,P)$ is amenable if the left-regular representation induces an isomorphism between the full and reduced semigroup $\Cst$\nb-algebras. If $G$ is amenable as a group, then $(G,P)$ is amenable. In fact, $\Cst(G,P)$ is nuclear in this case \cite[Corollary~6.45]{Li-book}. In general, it is not known whether amenability of the (weak) quasi-lattice order $(G,P)$ implies nuclearity of its $\Cst$\nb-algebra. In addition, nuclearity of $\Cst(G,P)$ implies that of its boundary quotient semigroup $\Cst$\nb-algebra. Since this latter is quite often purely infinite and simple \cite{CL2,LOS}, nuclearity results for semigroup $\Cst$\nb-algebras combined with \cite{Tu1999} may give new examples of $\Cst$\nb-algebras that are classifiable by $K$\nb-theory. Our main purpose in this article is to analyse nuclearity of $\Cst(G,P)$ and to contribute to the existing literature with new examples.

A sufficient condition for amenability of quasi-lattice orders was established by Laca and Raeburn in \cite[Proposition~6.6]{LR}. This condition involves the existence of a generalised length function, now called a ``controlled map'', from~$G$ into an amenable group. The controlled maps from \cite{LR} have proved very useful. They were used in \cite{CL1} to show that any graph product of a family of quasi-lattice orders in which the
underlying groups are amenable is again an amenable quasi-lattice order. The existence of a controlled map into an amenable group implies that the partial action of~$G$ on any closed invariant subset of the Nica spectrum is amenable \cite[Theorem~4.7]{CL2} and that $\Cst(G,P)$ is nuclear \cite[Corollary~8.3]{Li}. Spielberg used ideas of a similar flavour to give sufficient conditions for nuclearity of the $\Cst$\nb-algebra associated to a finitely aligned category of paths (see \cite[Definition~9.5 and Theorem~9.8]{SCP}).

A controlled map as originally introduced by Laca and Raeburn in \cite{LR} is an order-preserving group homomorphism~$\phi$ between quasi-lattice ordered groups that preserves the least upper bound of any two elements of~$P$ when it exists, and such that $\ker\phi\cap P$ is trivial. In \cite{aHRT}, an Huef, Raeburn and Tolich introduced a version of a controlled map whose restriction to the positive cone allows a nontrivial kernel, and used it to prove an amenability result along the lines of \cite{LR}. A key example of a controlled map in the sense of~\cite{aHRT} comes from the height map on a Baumslag--Solitar group. Recall that, for $c,d\geq 1$, the Baumslag--Solitar group $\mathrm{BS}(c,d)$ is defined by 
\[\mathrm{BS}(c,d)=\langle a, b: ab^c=b^{d}a\rangle.\] 
 Spielberg proved that $(\mathrm{BS}(c,d),\mathrm{BS}(c,d)^+)$ is quasi-lattice ordered, where $\mathrm{BS}(c,d)^+$ is the unital subsemigroup of $\mathrm{BS}(c,d)$ generated by~$a$ and~$b$ \cite{SBS}. The ``height map'' from $\mathrm{BS}(c,d)$ to $\ZZ$ is the group homomorphism that sends~$a$ to~$1$ and~$b$ to~$0$. Its kernel when restricted to $\mathrm{BS}(c,d)^+$ is nontrivial. It is the unital semigroup generated by~$b$. Nevertheless, the height map is controlled in the sense of \cite{aHRT}. Because the kernel of the height map gives an amenable quasi-lattice order, it follows from \cite{aHRT} that $\Cst(\mathrm{BS}(c,d),\mathrm{BS}(c,d)^+)$ is amenable.

Here we present a more general definition of controlled maps. The main advantage of this new definition is that it also allows us to treat examples of weak quasi-lattices which have an infinite descending chain $$p_1\ge p_2\ge \cdots\ge p_n\ge \cdots.$$ Our motivating example is the weak quasi-lattice order coming from the Baumslag--Solitar semigroup $$\mathrm{BS}(c,-d)^+=\langle a,b: b^dab^c =a\rangle^+$$ and the corresponding Baumslag--Solitar group $\mathrm{BS}(c,-d)$~\cite{SBS}, where $c,d\geq 1$. The height map from $\mathrm{BS}(c,-d)$ to $\ZZ$ is not controlled in the sense of an Huef, Raeburn and Tolich because it is constant on the infinite descending chain $$a>\cdots>b^{nd}a >b^{(n+1)d}a>\cdots.$$ This sequence is not bounded below in $\mathrm{BS}(c,-d)^+$ by an element of height~$1$. However, the height map in this example is controlled in our new sense (see Definition~\ref{defn-less-controlled}). Our main theorem is:

\begin{thm}\label{T:nuclear}
Let $(G,P)$ and $(K,Q)$ be weakly quasi-lattice ordered groups. Suppose that $\mu\colon(G,P)\to (K,Q)$ is a controlled map in the sense of Definition~\textup{\ref{defn-less-controlled}}, that $K$ is amenable and that $\Cst(\ker\mu,\ker\mu\cap P)$ is nuclear. Then $\Cst(G,P)$ is nuclear. In particular, $(G,P)$ is amenable.
\end{thm}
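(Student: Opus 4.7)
The plan is to use the homomorphism $\mu$ to equip $\Cst(G,P)$ with a coaction of~$K$, and then to leverage amenability of~$K$ together with nuclearity of $\Cst(\ker\mu,\ker\mu\cap P)$ to propagate nuclearity to the whole algebra. First I would exhibit, via the universal property of $\Cst(G,P)$ applied to the representation $p\mapsto v_p\otimes u_{\mu(p)}$ (where $u_k$ are the canonical unitaries in $\Cst(K)$ and one checks Nica covariance is preserved), a coaction $\delta\colon\Cst(G,P)\to\Cst(G,P)\otimes\Cst(K)$. This gives $\Cst(G,P)$ the structure of a $K$-graded $\Cst$-algebra, and the canonical state $u_k\mapsto\delta_{k,e}$ of $\Cst(K)$ yields a conditional expectation $E$ onto the unit fibre
\[A_e = \clsp\{v_pv_q^*: p,q\in P,\ \mu(p)=\mu(q)\}.\]
Because $K$ is amenable, the associated Fell bundle is amenable, so $\Cst(G,P)$ is its cross-sectional $\Cst$\nb-algebra; it is a standard fact that such an algebra is nuclear as soon as the unit fibre is nuclear.

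The next step is to identify $A_e$ as an inductive limit of nuclear building blocks. For each $k$ in the image of $\mu|_P$, I would choose a parameterisation of how elements of $\mu^{-1}(k)\cap P$ sit over the weak quasi-lattice structure, and assemble the products $v_pv_q^*$ with $\mu(p)=\mu(q)=k$ into matrix corners over $\Cst(\ker\mu,\ker\mu\cap P)$: indeed, if $\mu(p)=\mu(q)$ then, after splitting off a common "$\mu$-height" part, the remainder should lie in $\ker\mu\cap P$. The axioms of Definition~\ref{defn-less-controlled} are precisely what is needed to ensure that the resulting corners multiply compatibly and that enlarging $k$ in $Q$ produces an injective, directed inductive system whose limit is all of $A_e$. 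Nuclearity of $A_e$ then follows because nuclearity is preserved under hereditary subalgebras, matrix amplifications, and directed limits.

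Combining the two steps yields nuclearity of $\Cst(G,P)$, and the "in particular" follows because nuclear algebras carry a faithful conditional expectation to any subalgebra containing the diagonal, forcing the canonical surjection $\Cst(G,P)\to\Cst_r(G,P)$ to be an isomorphism and hence $(G,P)$ to be amenable in the sense of Nica. The main obstacle will be step two. In the presence of the infinite descending chains highlighted by the $\mathrm{BS}(c,-d)^+$ motivating example, the fibres $\mu^{-1}(k)\cap P$ can fail to admit a minimum, so one cannot simply index matrix corners by their "least representative". The revised definition of controlled map in Definition~\ref{defn-less-controlled} is designed to circumvent this, but turning its axioms into a genuinely directed inductive system with well-defined, injective transition maps that respect Nica covariance and interact correctly with the weak quasi-lattice structure of $(G,P)$ is where I expect the bulk of the technical work to lie.
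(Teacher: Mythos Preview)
Your overall strategy matches the paper's exactly: build the coaction~$\delta_\mu$, reduce nuclearity of $\Cst(G,P)$ to nuclearity of the fixed-point algebra $A_e=\clsp\{v_pv_q^*:\mu(p)=\mu(q)\}$ via amenability of~$K$ (the paper invokes Quigg rather than Fell-bundle language, but the content is the same), and then exhibit $A_e$ as a limit of nuclear pieces built from matrices over $B_e\cong\Cst(\ker\mu,\ker\mu\cap P)$. You have also correctly located the technical core and the role of the new controlled-map axioms.

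There is, however, one structural point you have not quite got right. You write that ``enlarging $k$ in $Q$ produces an injective, directed inductive system whose limit is all of $A_e$'', which suggests that the subalgebras $B_k=\clsp\{v_pv_q^*:\mu(p)=\mu(q)=k\}$ themselves form a nested directed system over~$Q$. They do not: for $k\neq k'$ there is no inclusion $B_k\hookrightarrow B_{k'}$, and products $B_k\cdot B_{k'}$ land in $B_{k\vee k'}$ rather than in either factor. The paper resolves this by indexing not over~$Q$ but over the finite subsets $I\subseteq Q$ that are closed under~$\vee$, setting $C_I=\sum_{k\in I}B_k$, and observing that if $m\in I$ is minimal then $C_{I\setminus\{m\}}$ is a (two-sided) \emph{ideal} in $C_I$ with $C_I=C_{I\setminus\{m\}}+B_m$. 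Nuclearity of $C_I$ then follows by induction on $|I|$ from the short exact sequence
\[
0\longrightarrow C_{I\setminus\{m\}}\longrightarrow C_I\longrightarrow B_m/(B_m\cap C_{I\setminus\{m\}})\longrightarrow 0,
\]
using that extensions of nuclear by nuclear are nuclear. This extension/ideal step is the missing ingredient in your sketch; the permanence properties you list (hereditary subalgebras, matrix amplifications, direct limits) are not by themselves enough to pass from the individual $B_k$'s to all of~$A_e$.
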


The proof involves a detailed analysis of the structure of a fixed-point algebra associated to a coaction of~$K$ on $\Cst(G,P)$, obtained from $\mu$. This fixed-point algebra is in general much larger than $\Cst(\ker\mu, \ker\mu\cap P)$, but we prove that it is also nuclear.  Then we apply \cite[Corollary~2.17]{Quigg-discrete}: if a $\Cst$\nb-algebra $B$ carries a coaction of a discrete amenable group for which the fixed-point algebra is nuclear, then~$B$ is nuclear, too.

Recently, Li, Omland and Spielberg studied semigroup $\Cst$\nb-algebras and their boundary quotients for right LCM one-relator semigroups. Their main result concerning nuclearity is  \cite[Corollary~3.10]{LOS}. Under certain assumptions on the defining relator, they provided a graph model for the boundary quotient semigroup $\Cst$\nb-algebra, and concluded that both the semigroup $\Cst$\nb-algebra and its boundary quotient are nuclear.  Motivated by~\cite{LOS}, we also look at one-relator semigroups. We obtain nuclearity of the semigroup $\Cst$\nb-algebras associated to a class of weakly quasi-lattice ordered groups coming from HNN extensions, where the controlled maps involved are also height maps. This contains all the Baumslag--Solitar semigroups as particular cases. Our nuclearity results for one-relator semigroups have no overlap with those in \cite{LOS}, and may be used in combination with \cite[Corollary~3.5]{LOS} and \cite{Tu1999} to produce examples of $\Cst$\nb-algebras that are classifiable by $K$\nb-theory.

Weak quasi-lattice orders are in general not well-behaved under semidirect products. In other words, the semidirect product of two weakly quasi-lattice ordered groups may not be weakly quasi-lattice ordered. We illustrate this with Example~\ref{ex:nonexample}. However, we show that particular examples of semidirect products are naturally weakly quasi-lattice ordered. These mostly involve actions on free groups $\FF$. We then use controlled maps and nuclearity of $\Cst(\FF,\FF^+)$ to obtain nuclearity for the corresponding semigroup $\Cst$\nb-algebras.

We also include graph products of weak quasi-lattices among our examples. We first show that the graph product of weakly quasi-lattice ordered groups is again weakly quasi-lattice ordered, with a natural positive cone. To do so, we prove a result along the same lines of \cite[Proposition~13]{CL1}. Our proof is a little different though: that the graph product of quasi-lattice orders is again quasi-lattice ordered was implicitly used in the proof of \cite[Proposition~13]{CL1} (see \cite[Theorem~10]{CL1}). When the underlying groups are amenable, we apply our result to obtain nuclearity of the semigroup $\Cst$\nb-algebra associated to the graph product of weak quasi-lattices.

After we submitted this article for publication,  we learned that Fountain and Kambites prove that a graph product of left LCM monoids is again a left LCM  monoid \cite[Theorem~2.6]{FK}.  Our Theorem~5.25 proves this result for the particular case of weak quasi-lattice orders,  and also gives an iterative procedure for computing the least upper bound for elements in a graph product. This procedure is crucial for finding a controlled map, and so we have retained Theorem~5.25 in order to prove our nuclearity result in Corollary~5.27. 
We thank Nadia Larsen for bringing \cite{FK} to our attention.

\subsection*{Outline} We start in \S\ref{sec-background} with some background material. In \S\ref{sec-control} we discuss the old and new definitions of controlled maps, and illustrate the need for greater generality with examples. We also answer in the negative a question, by Marcelo Laca, whether a positive cone of a weak quasi-lattice order can be embedded in group such that this new pair is quasi-lattice ordered. This question was motivated by Scarparo's example (Example~\ref{ex-scarparo}). In \S\ref{sec-nuclearity} we introduce the coaction built from a controlled map, analyse the structure of the fixed-point algebra, and then prove our main theorem (Theorem~\ref{thm-nuclearity2}). In \S\ref{sec:examples} we present our classes of examples. In \S\ref{sec-amenability}, we prove an amenability result which would be of interest when $(\ker\mu,\ker\mu\cap P)$ is amenable but $\Cst(\ker\mu,\ker\mu\cap P)$ is not known to be nuclear. We provide in the appendix a characterisation of nuclearity for $\Cst(G,P)$ through faithfulness of conditional expectations.

\section{Background}\label{sec-background}

Let $G$ be a discrete group with a  unital subsemigroup $P$ such that $P\cap P^{-1}=\{e\}$. There is a partial order on $G$ defined by $x\leq y$ if and only if $x^{-1}y\in P$.
It is easy to see that $\le$ is left-invariant.
In \cite{N}, Nica defined the pair $(G,P)$ to be \emph{quasi-lattice ordered} if the following is satisfied:
\begin{enumerate}
\item[(QL)]  any $n\geq 1$ and $x_1, x_2, \dots, x_n\in G$ which have common upper bounds in $P$ also have a least common upper bound in $P$.
\end{enumerate}
We denote the least common upper bound of $x_1,  \dots, x_n$ by  $x_1\vee\dots\vee x_n$ if it exists. In this case, we sometimes write $x_1\vee  \dots\vee x_n<\infty$. We then write $x_1\vee \dots\vee x_n=\infty$  if $x_1,  \ldots,x_n$ have no upper bound in $P$. 
As already observed by Nica, $(G,P)$ is quasi-lattice ordered if and only if
\begin{enumerate}
\item[(QL1)] any $x\in PP^{-1}$ has a least upper bound in $P$; and 
\item[(QL2)] any $x, y\in P$ with a common upper bound have a least common upper bound. 
\end{enumerate}
In fact, (QL) and (QL1) are equivalent.  See \cite[Lemma~7]{CL1} for this and further equivalent formulations of (QL). Only (QL2) is needed to define the $\Cst$\nb-algebras studied by Nica in \cite{N}.

In this paper, we will consider pairs $(G,P)$ as above satisfying only (QL2). We will use the terminology of Exel in \cite[Definition 32.1(v)]{Exel's book}:
\begin{dfn}
Let $G$ be a discrete group with a  unital subsemigroup $P$ such that $P\cap P^{-1}=\{e\}$ and with the induced left-invariant order on  $G$. If  $(G,P)$ satisfies (QL2), then we say that $(G,P)$ is a \emph{weakly quasi-lattice ordered group} or that $(G,P)$ is a \emph{weak quasi-lattice}.  
\end{dfn}

To make clear the relationship to right LCM semigroups, we note that we can rephrase (QL2) as follows:
\begin{enumerate}
\item[(QL2')]  for any $x, y\in P$ with $xP\cap yP\neq \emptyset$ there exists a unique $z\in P$ such that $xP\cap yP=zP$.
\end{enumerate}
In right LCM semigroups, the $z$ may not be unique. 

Consider $\ell^2(P)$ with orthonormal basis $\{e_x:x\in P\}$. For each $x\in P$, there is an isometry $T_x$ on $\ell^2(P)$ such that $T_xe_y=e_{xy}$ for $y\in P$. Because $P$ is left-cancellative,   $T: P\to B(\ell^2(P))$  is an isometric representation of $P$; it is called the \emph{Toeplitz representation} (and sometimes the left-regular representation).  Nica observed that
\begin{equation}\label{Nica}
T_xT_x^*T_yT_y^* =
\begin{cases}
T_{x \vee y}T_{x \vee y}^*&\text{if $x\vee y<\infty$}\\
0&\text{if $x\vee y=\infty$.}
\end{cases}
\end{equation}
 The \emph{Toeplitz algebra} is the $\Cst$\nb-algebra $\Tt(G,P)$ generated by the Toeplitz representation, that is, 
\[
\Tt(G,P)\coloneqq \Cst(\{T_x: x\in P\})\subseteq B(\ell^2(P)).
\]
An isometric representation of $P$ satisfying \eqref{Nica} is called \emph{Nica covariant}. 
There is a $\Cst$\nb-algebra, denoted $
\Cst(G,P)$, that is universal for Nica-covariant representations. It follows from Nica covariance that if $w$ is the universal Nica covariant representation in $\Cst(G,P)$, then  
\[
\Cst(G,P)=\clsp\{w_x w_y^*:x,y\in P\}.
\]
Notice that both $\Cst(G,P)$  and $\Tt(G,P)$ are defined using only (QL2). 

The Toeplitz representation  induces a surjection \[\pi_T:\Cst(G,P)\to \Tt(G,P).\]  Following Nica, we say that $(G,P)$ is \emph{amenable} if $\pi_T$
is an isomorphism.
The usual approach to prove the amenability of $(G,P)$ involves a conditional expectation. 

Let $B$ be a $\Cst$\nb-subalgebra of a $\Cst$\nb-algebra $C$. 
Then $E\colon C\to B$  is a \emph{conditional expectation} if $E$ is linear, positive and contractive such that  $E(b)=b$, $E(bc)=bE(c)$ and $E(cb)=E(c)b$ for all  $b\in B$ and $c\in C$.  Equivalently, $E\colon C\to B$ is a linear contractive map with~$E(b)=b$ for all~$b\in B$ by \cite{Tomiyama} (see also \cite[Theorem II.6.10.2]{Bla}). A conditional expectation is called \emph{faithful} if $E(c^* c)=0$ implies $c=0$.

Nica proves that there is a conditional expectation 
\begin{equation*}
E\colon \Cst(G,P)\to \clsp\{w_x w_x^*:x\in P\} \text{ such that } E(w_xw_y^*)=\begin{cases}w_xw_x^*&\text{if $x=y$}\\0&\text{else.}
\end{cases}
\end{equation*}
By \cite[Proposition on page~34]{N}, a quasi-lattice $(G, P)$ is amenable if and only if $E$ is faithful. Again,  Nica only needed  (QL2)  to define the amenability of $(G,P)$ and to characterise it with the conditional expectation $E$.
 Moreover, by Proposition~\ref{prop-iff-diagonal}, nuclearity of $\Cst(G,P)$ can also  be characterised in terms of faithfulness of the canonical conditional expectation \[E_{A,\max}\colon A\otimes_{\max} \Cst(G,P)\to A\otimes_{\max}  \clsp\{w_x w_x^*:x\in P\}\] for all unital $\Cst$\nb-algebras~$A$.

\section{Controlled maps -- then and now}\label{sec-control}

The first definition of a controlled map $\mu\colon(G,P)\to (K,Q)$ between quasi-lattice ordered groups appeared in  \cite[Proposition~4.2]{LR}; the existence of such a map into an amenable group $K$ implies that $(G,P)$ is amenable in the sense of Nica~\cite{N}. Because this first definition asked that $\ker\mu\cap P=\{e\}$, it does not apply to the quasi-lattices associated to the Baumslag-Solitar groups which were shown to be amenable in~\cite{CaHR}. The more general notion of controlled maps of   \cite[Definition~3.1]{aHRT} allows for a nontrivial kernel, and applies to the ``height map'' of most, but not all, quasi-lattice ordered groups associated to the  Baumslag-Solitar groups (see Examples~\ref{example-BS}  and~\ref{example-ilija} below). In Definition~\ref{defn-less-controlled} below we give the most general definition of controlled maps to date: in particular it allows the infinite descending chains appearing  in Example~\ref{example-ilija}. 

Here we start with Definition~\ref{def-controlled},  a slight modification of  \cite[Definition~3.1]{aHRT} which allows weak quasi-lattices (\cite[Definition~3.1]{aHRT} could have been stated in this generality).

\begin{dfn}\label{def-controlled}
Let $(G,P)$ and $(K,Q)$ be weak quasi-lattices.
Suppose that $\mu\colon G\to K$ is an order-preserving group homomorphism. 
Then $\mu$ is \emph{controlled}, in the sense of an Huef, Raeburn and Tolich,
 if it has the following properties:
    \begin{enumerate}
      \item\label{cm-1-new} For $x,y\in P$ with $x\vee y<\infty$, we have $\mu(x)\vee \mu(y)=\mu(x\vee y)$.
      \item\label{cm-2-new} Let $q\in Q$. Set
$\Sigma_q=\{\sigma\in \mu^{-1}(q)\cap P: \text{$\sigma$ is minimal}\}$.
\begin{enumerate}
\item\label{cm-3-new} If $x\in \mu^{-1}(q)\cap P$, then there exists $\sigma\in \Sigma_q$ such that $\sigma\leq x$.
\item\label{cm-4-new}   If  $\sigma,\tau\in\Sigma_q$, then $\sigma\vee\tau<\infty\Rightarrow\sigma=\tau$.
\end{enumerate}
\end{enumerate}
\end{dfn}

\begin{example}\label{ex-free-group}  Let $\FF$ be the free group on a  countable set of generators and let $\FF^+$ be the  unital free semigroup, that is $\FF^+$ is the smallest subsemigroup containing  the identity $e$ and the generators. Then $(\FF,\FF^+)$ is a quasi-lattice \cite[\S2.2]{N}. Let $\mu\colon\FF\to \ZZ$ be the homomorphism that sends each generator to $1$. To see that  $\mu\colon(\FF,\FF^+)\to (\ZZ,\NN)$ is a controlled map in the sense of Definition~\ref{def-controlled}, we note that if $x\in \FF^+$, then $\mu(x)$ is precisely the length of~$x$. Suppose that $x,y\in \FF^+$ with $x\vee y<\infty$. Then either $x=yp$ or $y=xq$ for some $p,q\in  \FF^+$. Say $x=yp$. Then $\mu(x)\vee\mu(y)=\mu(x)=\mu(x\vee y)$, giving (\ref{cm-1-new}). For (\ref{cm-2-new}) we observe that for each $n\in\NN$ we have $\Sigma_n=\mu^{-1}(n)\cap \FF^+$, and that no two distinct elements of $\Sigma_n$ are comparable. Thus $\mu$ is controlled in the sense of Definition~\ref{def-controlled}.  Here $\ker\mu\cap \FF^+=\{e\}$, and hence $\mu$ is a controlled map in the sense of Laca and Raeburn \cite[Proposition~4.2]{LR} as well.
\end{example}

Using Example~\ref{ex-free-group} it is easy to construct an example of a weak quasi-lattice with a controlled map. This example of a weak quasi-lattice which is not a quasi-lattice was obtained by Scarparo in \cite{Scarparo} and is discussed on page~249 of  \cite{Exel's book}. 

\begin{example} \label{ex-scarparo} Consider the free group $\FF$  on $2$ generators $a$ and $b$ and let $\PP=\{e, bw: w\in\FF^+\}$. Then $(\FF,\PP)$ is a weak quasi-lattice. Again, let $\mu\colon\FF\to \ZZ$ be the homomorphism that sends each generator to~$1$. Then $\mu$ is controlled  in the sense of Definition~\ref{def-controlled} with $\Sigma_n=\mu^{-1}(n)\cap \PP$.  Again $\ker\mu\cap \PP=\{e\}$.
\end{example}

Exel observed  on page~249 of  \cite{Exel's book} that $\PP=\{e, bw: w\in\FF^+\}$ is isomorphic to the positive cone  in the free group $\FF_\infty$ on infinitely many generators, and hence $(\FF_\infty, \PP)$ is a quasi-lattice ordered group. Prompted by a question by Marcelo Laca, we show in Proposition~\ref{prop-dilian} below that the positive cone $P$ of the Baumslag-Solitar group discussed in Example~\ref{ex-BS-with-chains}  cannot be embedded in a group $G$  such that $(G,P)$ is a quasi-lattice ordered group.

\begin{example}\label{example-BS} Let $c,d\geq 1$ and 
consider the Baumslag-Solitar group \[G=\langle a, b: ab^c=b^{d}a\rangle.\] Let $P$ be the  unital subsemigroup generated by $a$ and $b$. Then $(G,P)$ is quasi-lattice ordered by \cite[Theorem~2.11]{SBS}.
Let $\mu\colon G\to \ZZ$ be the ``height map'', that is, the homomorphism such that $\mu(a)=1$ and $\mu(b)=0$. 
We will show that $\mu$ is a controlled map in the sense of Definition~\ref{def-controlled}.  

To see that $\mu$ is order preserving, suppose that $x\leq y\in G$. Then $x^{-1}y\in P$ and 
$\mu(y)-\mu(x)=\mu(x^{-1}y)\in \mu(P)=\NN
$. Thus $\mu(x)\leq \mu(y)$ in $\ZZ$.

Let $x,y\in P$ with $x\vee y<\infty$. Say $\mu(y)\geq \mu(x)$. If $\mu(y)>\mu(x)$, then by \cite[Lemma~3.4]{CaHR} there exists $t\in\NN$ such that $x\vee y=yb^t$. So $\mu(x\vee y)=\mu(y)=\mu(x)\vee\mu(y)$.
If $\mu(y) =\mu(x)$, then by \cite[Lemma~3.4]{CaHR} there exists $t\in\NN$ such that either $x\vee y=yb^t$ or $x\vee y=xb^t$. This also implies $\mu(x\vee y)=\mu(x)\vee\mu(y)$.

For (\ref{cm-2-new}) we use the normal form for elements of $P$ to see that
\begin{align*}\mu^{-1}(q)\cap P&=\{b^{s_0}ab^{s_1}\cdots b^{s_{q-1}}ab^{s_{{q}}}: 0\leq s_0,\dots, s_{q-1}<d, s_{{q}}\in \NN\}\quad\text{and}\\
\Sigma_q&=\{ b^{s_0}ab^{s_1}\cdots b^{s_{q-1}}a : 0\leq s_0,\dots, s_{q-1}<d \}.
\end{align*}
Looking at the two sets, (\ref{cm-3-new}) clearly holds.  Let $x,y\in \Sigma_q$ with $x\vee y<\infty$. Then, by  \cite[Lemma~3.4]{CaHR},  either $x\vee y=yb^t$ or $x\vee y=xb^t$ for some $t$, and since $x,y\in \Sigma_q$  we get that  $x=y$.  This gives (\ref{cm-4-new}). 
\end{example}

The next example, provided by Ilija Tolich,   shows how  (\ref{cm-3-new}) in Definition~\ref{def-controlled} can fail.

\begin{example} \label{example-ilija}
Let $d\geq 1$ and consider the Baumslag-Solitar group \[G=\langle a, b: ab=b^{-d}a\rangle.\]
This fits into case (BS3) in \cite{SBS} with $c=1$.  Let $P$ be the subsemigroup generated by $a$ and $b$.  By  \cite[Lemma~2.12]{SBS}, $PP^{-1}=P\cup P^{-1}$, and it follows easily that $(G,P)$ is a quasi-lattice\footnote{At the end of the proof of \cite[Lemma~2.12]{SBS}, it is stated that $(G,P)$ is ``totally ordered, hence quasi-lattice ordered'', but $(G,P)$ is not totally ordered. For example, $b^ma\leq b^na$ if and only if $d$ divides $m-n$ and $n\leq m$. In particular, if $d$ does not divide $m-n$, then $b^na$ and $b^ma$ are not comparable.}.  

Let $\mu\colon G\to \ZZ$ be the height map as in the previous example. Then $\mu^{-1}(1)\cap P$ consists of elements of the form $b^iab^j$ for some $i,j\in\NN$, and using the relation $ab=b^{-d}a$ we can write that as $b^ma$ for some $m\in\ZZ$.
We have $b^ma\leq b^na$ if and only if $d$ divides $m-n$ and $n\leq m$. So for $0\leq k<d$ we  have the  infinite chains 
\begin{align*}
&\ldots\leq b^{3d+k}a\leq b^{2d+k}a\leq b^{d+k}a\leq b^ka\leq b^{-1d+k}a\leq b^{-2d+k}a\leq \ldots
\end{align*}
partioning $\mu^{-1}(1)\cap P$. Thus $\Sigma_1=\emptyset$, and then (\ref{cm-3-new}) in Definition~\ref{def-controlled} implies that $\mu$ is not a controlled map.  
\end{example}

\begin{dfn}\label{defn-less-controlled}
Let $(G,P)$ and $(K,Q)$ be weakly quasi-lattice ordered groups. Suppose that $\mu\colon G\to K$ is an order-preserving group homomorphism. We say that $\mu$ is \emph{controlled} if it has the following properties:
\begin{enumerate}
\item\label{defn-less-controlled-a}  For $x,y \in P$ with $x\vee y<\infty$, we have $\mu(x)\vee \mu(y)=\mu(x\vee y)$.
\item\label{defn-less-controlled-b}   For all $q\in Q$,  there exist a set $\Lambda_q$ and a family of sets $\{S_{\lambda}: \lambda \in \Lambda_q\}$ such that:
\begin{enumerate}
\item\label{defn-less-controlled-b1} $\mu^{-1}(q)\cap P$ is the disjoint union $\bigsqcup_{\lambda\in\Lambda_q}S_\lambda$; and 
\item\label{defn-less-controlled-b2} for each $\lambda\in \Lambda_q$, there is a decreasing sequence $(s_n^\lambda)_{n\in\NN}$ in $S_\lambda$  such that \[S_\lambda=\bigcup_{\substack{n\in\NN}}\{x\in  \mu^{-1}(q)\cap P \colon x\geq s_n^\lambda\}.\]
\end{enumerate}
\end{enumerate}
\end{dfn}

\begin{lem}\label{lem-vee-empty} Suppose that $\mu\colon G\to K$  is a controlled map in the sense of Definition~\textup{\ref{defn-less-controlled}}. Let $q\in Q$ and $\lambda_1\neq \lambda_2\in \Lambda_q$. Then $p_1\vee p_2=\infty$ for all $p_1\in S_{\lambda_1}$ and $p_2\in  S_{\lambda_2}$.
\end{lem}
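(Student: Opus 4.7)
The plan is to argue by contradiction: assume there exist $p_1 \in S_{\lambda_1}$ and $p_2 \in S_{\lambda_2}$ with $\lambda_1 \neq \lambda_2$ such that $r := p_1 \vee p_2 < \infty$, and derive that $\lambda_1 = \lambda_2$.

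First I would pin down where $r$ lives. Property~\eqref{defn-less-controlled-a} of Definition~\ref{defn-less-controlled} gives $\mu(r) = \mu(p_1) \vee \mu(p_2) = q \vee q = q$, so $r \in \mu^{-1}(q) \cap P$. Then by the disjoint partition in \eqref{defn-less-controlled-b1} there is a unique $\lambda \in \Lambda_q$ with $r \in S_\lambda$.

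Next I would show $\lambda = \lambda_1$. Since $p_1 \in S_{\lambda_1}$, clause~\eqref{defn-less-controlled-b2} gives some $n_1 \in \NN$ with $p_1 \geq s_{n_1}^{\lambda_1}$. Because $\leq$ is transitive, $r \geq p_1 \geq s_{n_1}^{\lambda_1}$, and $r \in \mu^{-1}(q) \cap P$, so the characterisation in \eqref{defn-less-controlled-b2} places $r$ in $S_{\lambda_1}$. Disjointness of the partition then forces $\lambda = \lambda_1$. The identical argument applied to $p_2$ and $\lambda_2$ yields $\lambda = \lambda_2$, contradicting $\lambda_1 \neq \lambda_2$.

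There is no serious obstacle here: the lemma is essentially a direct unpacking of Definition~\ref{defn-less-controlled}. The only substantive ingredient is the interplay between \eqref{defn-less-controlled-a}, which guarantees that suprema inside $P$ lying above two elements of $\mu^{-1}(q)\cap P$ stay in $\mu^{-1}(q)\cap P$, and the fact that each $S_\lambda$ is upward-closed within $\mu^{-1}(q)\cap P$ by virtue of being the union of the principal up-sets of a decreasing sequence. Together these two features make the partition $\{S_\lambda : \lambda \in \Lambda_q\}$ separate the elements of $\mu^{-1}(q)\cap P$ as far as joinability in $P$ is concerned.
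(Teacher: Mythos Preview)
Your proof is correct and follows essentially the same route as the paper's: argue by contradiction, use item~\eqref{defn-less-controlled-a} to place the join $r=p_1\vee p_2$ in $\mu^{-1}(q)\cap P$, then use item~\eqref{defn-less-controlled-b2} and transitivity to show $r\in S_{\lambda_1}\cap S_{\lambda_2}$, contradicting disjointness. The only cosmetic difference is that you name the unique $\lambda$ with $r\in S_\lambda$ before showing $\lambda=\lambda_1=\lambda_2$, whereas the paper directly shows $r\in S_{\lambda_1}$ and $r\in S_{\lambda_2}$.
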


\begin{proof}
Looking for a contradiction, we suppose that  there exist $x\in S_{\lambda_1}$ and $y\in S_{\lambda_2}$ such that $x\vee y<\infty$. Then $\mu(x\vee y)=\mu(x)\vee \mu(y)=q$ gives $x\vee y\in\mu^{-1}(q)\cap P$. Also, there exist $m,n\in \NN$ such that $x\geq s_m^{\lambda_1}$ and $y\geq s_n^{\lambda_2}$.
But now $x\vee y\geq x\geq s_m^{\lambda_1}$ implies that $x\vee y\in S_{\lambda_1}$. Similarly, $x\vee y\in S_{\lambda_2}$. This contradicts that $\mu^{-1}(q)\cap P$ is the disjoint union $\bigsqcup_{\lambda\in\Lambda_q}S_\lambda$.
\end{proof}

\begin{lem}
 Let $(G,P)$ and $(K,Q)$ be weakly quasi-lattice ordered groups.  Suppose that $\mu\colon G\to K$ is an order-preserving group homomorphism. If $\mu$ is a controlled map in the sense of Definition~\textup{\ref{def-controlled}}, then it is controlled in the sense of Definition~\textup{\ref{defn-less-controlled}}.
\end{lem}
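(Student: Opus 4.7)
The plan is to take $\Lambda_q \coloneqq \Sigma_q$ and, for each $\sigma \in \Sigma_q$, set
\[
S_\sigma \coloneqq \{x \in \mu^{-1}(q)\cap P : x\geq \sigma\},
\]
together with the constant sequence $s_n^\sigma \coloneqq \sigma$ for all $n\in\NN$. Condition~\eqref{defn-less-controlled-a} is literally condition~\eqref{cm-1-new}, so nothing needs to be shown there.

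For~\eqref{defn-less-controlled-b1}, covering follows from~\eqref{cm-3-new}: every $x\in \mu^{-1}(q)\cap P$ satisfies $x\geq\sigma$ for some $\sigma\in \Sigma_q$, so $x\in S_\sigma$. For disjointness, suppose $x\in S_{\sigma_1}\cap S_{\sigma_2}$ with $\sigma_1,\sigma_2\in\Sigma_q$. Then $x$ is a common upper bound of $\sigma_1$ and $\sigma_2$ in $P$, so $\sigma_1\vee\sigma_2<\infty$, and~\eqref{cm-4-new} forces $\sigma_1=\sigma_2$. Thus $\mu^{-1}(q)\cap P = \bigsqcup_{\sigma\in\Sigma_q} S_\sigma$.

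For~\eqref{defn-less-controlled-b2}, note that the constant sequence $s_n^\sigma=\sigma$ lies in $S_\sigma$ (as $\sigma\geq\sigma$), is trivially decreasing, and
\[
\bigcup_{n\in\NN}\{x\in\mu^{-1}(q)\cap P : x\geq s_n^\sigma\} = \{x\in\mu^{-1}(q)\cap P : x\geq \sigma\} = S_\sigma,
\]
which is what was required. There is no real obstacle here; the only subtle point is recognising that the disjointness of the decomposition is precisely what~\eqref{cm-4-new} encodes.
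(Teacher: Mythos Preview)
Your proof is correct and follows essentially the same approach as the paper: both take $\Lambda_q=\Sigma_q$, define $S_\sigma=\{x\in\mu^{-1}(q)\cap P:x\geq\sigma\}$, use \eqref{cm-3-new} for covering and \eqref{cm-4-new} for disjointness, and take the constant sequence $s_n^\sigma=\sigma$.
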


\begin{proof}
Let $\mu$ be a controlled map in the sense of Definition~\ref{def-controlled}. We need to verify item (\ref{defn-less-controlled-b}) of Definition~\ref{defn-less-controlled}.
Fix $q\in Q$. Set $\Lambda_q=\Sigma_q.$
For each $\lambda\in\Lambda_q$, set
\[
S_\lambda=\{x\in \mu^{-1}(q)\cap P\colon x\geq \lambda\}.
\]
Suppose that $S_{\lambda_1}\cap S_{\lambda_2}\neq \emptyset$ and let $x\in S_{\lambda_1}\cap S_{\lambda_2}$. Then $x\geq \lambda_i$ and hence $\lambda_1\vee \lambda_2<\infty$. Since $\lambda_1, \lambda_2\in \Sigma_q$, this implies $\lambda_1=\lambda_2$ by item (\ref{cm-4-new}) of Definition~\ref{def-controlled}. Together with  (\ref{cm-3-new}) of Definition~\ref{def-controlled} we get that
\[\mu^{-1}(q)\cap P=\bigsqcup_{\lambda\in\Lambda_q} S_\lambda\]
which is item (\ref{defn-less-controlled-b1}) of Definition~\ref{defn-less-controlled}.
For item (\ref{defn-less-controlled-b2}) of Definition~\ref{defn-less-controlled} we just take the sequence $(s_n^\lambda)_{n\in\NN}$ to be the constant sequence $(\lambda)_{n\in\NN}$.
\end{proof}

\begin{example}\label{ex-BS-with-chains}
Let $c,d\geq 1$ and consider the Baumslag--Solitar group 
\[
G=\langle a,b\colon ab^c=b^{-d}a\rangle.
\] 
Let $P$ be the (unital) subsemigroup of~$G$ generated by~$a$ and~$b$. Then $(G,P)$ is a weakly quasi-lattice ordered group
by  \cite[Proposition~2.10]{SBS}  and it is a quasi-lattice ordered group if and only if~$c=1$  by \cite[Lemma~2.12]{SBS}.  

Let $\mu\colon G\to \ZZ$ be the height map as in Examples~\ref{example-BS} and \ref{example-ilija}. Since $\mu(P)\subseteq \NN$, it follows that $\mu$ is order-preserving. We will show that $\mu$ is a controlled map from $(G,P)$ to $(\ZZ,\NN)$ in the sense of Definition~\ref{defn-less-controlled}.

To do this, we will use  the fact that every $x\in P$ with height $\mu(x)=q$ has a unique normal form
\begin{equation}\label{eq:formL}
x=b^{s_0}ab^{s_1}\cdots b^{s_{q-1}}ab^{s_q}
\end{equation}
where $0\leq s_0,\dots, s_{q-1}<d$, $s_q\in\ZZ$ and $\mu(x)=0\Longrightarrow s_q\geq 0$ (see \cite[Proposition~2.1 and Remark~2.4]{SBS}). 

 Let $x,y\in P$ with~$x\vee y<\infty$. We claim that $x$ and $y$ are comparable.  To see this, we consider $x, y$ in normal form \eqref{eq:formL}, say
\[
x=b^{s_0}ab^{s_1}\cdots b^{s_{q-1}}ab^{s_q} \text{\ and\ } y=b^{t_0}ab^{t_1}\cdots b^{t_{r-1}}ab^{t_r}
\]
Without loss of generality, suppose that $\mu(x)\leq \mu(y)$ (otherwise switch them). Then $s_i=t_i$ by \cite[Proposition~2.10]{SBS} for $i\leq q-1$, and hence
\[
x^{-1}y=b^{t_q-s_q}ab^{t_{q+1}}a\cdots b^{t_{r-1}}ab^{t_r}.
\]
If $t_q-s_q\geq 0$, then clearly $x^{-1}y\in P$ and $x\leq y$. Suppose that $t_q-s_q< 0$. Choose $n\in\NN$ such that $dn\geq s_q-t_q$.
Then using the relation $a=b^dab^c$ repeatedly we get that
\[
x^{-1}y=b^{nd+t_q-s_q}ab^{nc}b^{t_{q+1}}a\cdots b^{t_{r-1}}ab^{t_r}.
\]
Since $b^{nd+t_q-s_q}ab^{nc}\in P$ and $b^{t_{q+1}}a\cdots b^{t_{r-1}}ab^{t_r}\in P$, so is $x^{-1}y$, and $x\leq y$ as claimed. 
Now $\mu(x\vee y)=\mu(y)=\mu(x)\vee \mu(y)$ and~$\mu$ satisfies (\ref{defn-less-controlled-a}) of Definition~\ref{defn-less-controlled}.

\medskip

For (\ref{defn-less-controlled-b}), we start by observing that $\mu^{-1}(0)\cap P=\{b^{n} \colon n\in\NN\}$. Then set $\Lambda_0\coloneqq\{e\}$, $S_e\coloneqq\mu^{-1}(0)\cap P$ and $s_n^e:=e$ for all~$n$.

Let $q\in\NN\setminus\{0\}$.  Using the normal form and the observation in \cite[Lemma~2.6]{SBS} that  $x\in P$ and $\mu(x)>0$ implies $xb^m\in P$ for all $m\in\ZZ$, we deduce that
\[\mu^{-1}(q)\cap P=\{b^{s_0}ab^{s_1}\cdots b^{s_{q-1}}ab^{m}\colon 0\leq s_0,\dots, s_{q-1}<d, m\in\ZZ\}.
\]
Set  $\Lambda_q\coloneqq [0,d)^q\cap\NN^q$. For each $\lambda=(s_0,s_1,\ldots,s_{q-1})\in\Lambda_q$, set 
\[
S_\lambda\coloneqq\{b^{s_0}ab^{s_1}\cdots b^{s_{q-1}}ab^{m}\colon m\in\ZZ\}.
\]
Then $S_{\lambda} \cap S_{\lambda'}=\emptyset$ if $\lambda\neq\lambda'$ by uniqueness of the normal form~\eqref{eq:formL}. This gives (\ref{defn-less-controlled-b1}). 

For (\ref{defn-less-controlled-b2}), let  $\lambda=(s_0,s_1,\ldots,s_{q-1})\in\Lambda_q$ and $n\in \NN$, and set 
\[
s_n^\lambda\coloneqq b^{s_0}ab^{s_1}\cdots b^{s_{q-1}}ab^{-n}.
\] 
If $m\leq n$, then  $(s_n^\lambda)^{-1}s_m^\lambda= b^{n-m}\in P$, and   $(s_n^\lambda)_{n\in\NN}$ is a decreasing sequence in $S_\lambda$. 
It remains to verify that
 \begin{equation}\label{eq-set-equality}
 S_\lambda=\bigcup_{\substack{n\in\NN}}\{x\in \mu^{-1}(q)\cap P\colon x\geq s_n^\lambda\}.
 \end{equation}
 Consider  $x=b^{s_0}ab^{s_1}\cdots b^{s_{q-1}}ab^{m}\in S_\lambda$ where $m\in\ZZ$. Choose $n\in\NN$ such that $n+m\geq 0$. Then $(s_n^\lambda)^{-1}x=b^{n+m}\in P$. On the other hand, let $x\in \mu^{-1}(q)\cap P$ and suppose that $x\geq s_n^\lambda$ for some $n\in\NN$. Then $(s_n^\lambda)^{-1}x\in P$. Since $\mu((s_n^\lambda)^{-1}x)=0$ we get $(s_n^\lambda)^{-1}x=b^m$ for some $m\in\NN$. Thus $x=s_n^\lambda b^m =b^{s_0}ab^{s_1}\cdots b^{s_{q-1}}ab^{m+n}\in S_\lambda$.
 This proves \eqref{eq-set-equality}. Thus $\mu$ is controlled in the sense of Definition~\ref{defn-less-controlled}. 
\end{example}

Prompted by the weak quasi-lattice of Example~\ref{ex-scarparo}, Marcelo Laca asked the following question: let $(G,P)$ be a weakly quasi-lattice ordered group. Can $P$ be always embedded in a group $H$ such that $(H,P)$ is quasi-lattice ordered?  Here we show by example that the answer to this question is ``no''.

\begin{prop}\label{prop-dilian}
Let  $c>1$ and $d\geq 1$ such that  $c$  does not divide $d$. Consider the Baumslag--Solitar group 
\[
G=\langle a,b\colon ab^c=b^{-d}a\rangle.
\] 
Let $P$ be the unital subsemigroup of~$G$ generated by~$a$ and~$b$. Then there is no embedding of $P$ in a group $H$ such that $(H,P)$ is quasi-lattice ordered. 
\end{prop}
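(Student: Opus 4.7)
The plan is to argue by contradiction: assume $\iota\colon P\hookrightarrow H$ embeds $P$ into a group $H$ making $(H,P)$ quasi-lattice ordered, and exhibit a nontrivial element of $N\cap P$, where $N=\ker\psi$ for the group homomorphism $\psi\colon G\to H$ extending $\iota$. Such a $\psi$ exists because the defining semigroup relation $b^dab^c=a$ holds in $P$ (hence in $H$ via $\iota$), and injectivity of $\iota$ on $P$ gives $N\cap P=\{e\}$.

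First I would build a natural descending chain. For each $k\ge 0$ let $p_k\in P$ be the element whose image in $G$ is $ab^{-k}$; explicitly $p_k=b^{md}ab^{mc-k}$ for $m$ large enough. The computation $p_{k+1}^{-1}p_k=b\in P$ together with the fact that $b$ has infinite order in $G$ shows that $a=p_0>p_1>p_2>\cdots$ is a strictly decreasing chain in the intrinsic semigroup order on $P$. I would then identify the lower bounds of $(p_k)_{k\ge 0}$ in $P$ as precisely $\{b^n\colon n\ge 0\}$: any lower bound $z$ has $\mu(z)\in\{0,1\}$; the height-$0$ case gives $z=b^n$ (and such $b^n$ is indeed $\le p_k$ for every $k$), while for $z=b^{s_0}ab^{s_1}$ of height $1$, computing $z^{-1}p_k=b^{-s_1}(a^{-1}b^{-s_0}a)b^{-k}$ and using the Britton criterion $a^{-1}b^ma\in\langle b\rangle\iff d\mid m$ forces $s_0=0$, after which $z^{-1}p_k=b^{-s_1-k}\in P$ for every $k$ is impossible.

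Next, set $h=\psi(aba^{-1})=\iota(ab)\iota(a)^{-1}\in\iota(P)\iota(P)^{-1}$; the relation in $G$ gives $h^c=\iota(b)^{-d}$. The computation $h^{-1}\iota(p_k)=\psi(ab^{-1}a^{-1}\cdot ab^{-k})=\iota(p_{k+1})\in\iota(P)$ shows each $\iota(p_k)$ is an upper bound of $h$. By (QL1), $h$ has a least upper bound $z^*\in\iota(P)$, and since $\iota$ is an order-embedding and $z^*$ lies below every $\iota(p_k)$, the previous paragraph forces $z^*=\iota(b^n)$ for some $n\ge 0$. Choose $s\in P$ with $h\iota(s)=\iota(b)^n$, equivalently $\psi(aba^{-1}sb^{-n})=e$.

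To extract the contradiction I would raise the identity $h=\iota(b)^n\iota(s)^{-1}$ in $H$ to the $c$-th power. Applying $\psi$ to both sides and using $h^c=\iota(b)^{-d}$ yields $(b^ns^{-1})^cb^d\in N$, and hence its inverse $b^{-d}(sb^{-n})^c\in N$. If $s\in\langle b\rangle\cap P$, say $s=b^k$, this inverse simplifies to $b^{c(n-k)+d}$; the hypothesis $c\nmid d$ forces the exponent to be nonzero, so a nontrivial power of $b$ (or its inverse) lies in $N\cap P$, contradicting $N\cap P=\{e\}$. The main obstacle is the remaining case $\mu(s)\ge 1$. There I would conjugate $b^{-d}(sb^{-n})^c$ by $b^{-n}$ (which preserves $N$) to obtain
\[
w\;=\;b^{-n-d}(sb^{-n})^{c-1}s\;=\;(b^{-n-d}s)(b^{-n}s)^{c-1}\;\in\;N,
\]
and verify that each factor $b^{-n-d}s$ and $b^{-n}s$ can be rewritten as a positive word in $a,b$ by repeatedly applying the relation $b^{-d}a=ab^c$ to push the negative $b$-power across the leading $a$ of $s$, so $w\in P$. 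Since $\mu(w)=c\mu(s)\ge c\ge 1$, $w$ is nontrivial, producing the required element of $(N\cap P)\setminus\{e\}$ and completing the contradiction. The delicate step is the factor-by-factor verification that the conjugate really lies in $P$; this is where the explicit HNN relation does the work and where the hypothesis $c\nmid d$ (which makes the height-$0$ case impossible) is genuinely needed.
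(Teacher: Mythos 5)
Your proof is correct, and while it shares the paper's skeleton, the endgame is genuinely different. Both arguments hinge on the same phenomenon: an element of $PP^{-1}$ (yours is $h=\psi(aba^{-1})$, the paper's is $a(ab^d)^{-1}=h^{-d}$) bounded above by the infinite descending chain $ab^{-k}=b^{md}ab^{mc-k}$, so that any least upper bound would have to be a lower bound of the whole chain, hence (by the height-$0$/height-$1$ dichotomy and the normal form) a power of $b$. Where you diverge is in how the power-of-$b$ candidate is killed. The paper stays inside $H$, forces the complement $p$ in $b^k=hp$ to have height $0$ by a height count, and reads off the impossible identity $ab^{-d}=b^{k-l}a$ from uniqueness of normal forms (this is where $c\nmid d$ enters, somewhat tersely). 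You instead make the extension $\psi\colon G\to H$ and its kernel $N$ explicit, exploit the clean identity $h^c=\iota(b)^{-d}$, and convert the hypothesis $h\iota(s)=\iota(b)^n$ into a nontrivial element of $N\cap P$: directly as $b^{c(n-k)+d}$ when $\mu(s)=0$ (using $c\nmid d$), and via the conjugation $w=(b^{-n-d}s)(b^{-n}s)^{c-1}$ when $\mu(s)\geq 1$, where the observation that $b^{-m}s\in P$ whenever $m\geq 0$ and $\mu(s)\geq 1$ (push $b^{-m}$ past the first $a$ using $b^{-d}a=ab^c$) does the work. This buys you two things: you never need to argue that $\mu(s)=0$ is forced (the paper's corresponding step), and the contradiction $N\cap P\neq\{e\}$ is structurally transparent rather than resting on an informal "impossible by normal forms" inside $H$. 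The cost is a slightly longer case analysis; both proofs are complete and use the hypothesis $c\nmid d$ in exactly one place, the height-zero case.
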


\begin{proof} Suppose that there were a group $H$ and an embedding of $P$ in $H$. 
We identify $P$ with its copy in  $H$. To show that $(H,P)$ is not quasi-lattice ordered, we will show that 
 \[h=a(ab^{d})^{-1}\in PP^{-1}\]  does not have a least upper bound in $P$.

For all $n\in\NN$ we have
\[h^{-1}b^{nd}a=ab^{d}(a^{-1}b^{nd}a)=ab^{d}b^{-nc}=ab^{-nc}b^{d}=b^{nd}ab^{d}\in P.
\]
Thus  $h\leq b^{nd}a$ for all  $n\in\NN$, and  we have an infinite descending chain  \[(h\leq) \cdots \le b^{(n+1)d}a\leq b^{nd}a\le \cdots.\]
We will show that no element $x\in P$ satisfies 
\[
h\le x\le b^{nd}a\text{ for all }n\in\NN.
\] 
Assume, looking for a contradiction, that there exists $x\in P$ such that  $h \le x\le b^{nd}a$ for all  $n\in\NN$. 
Since $e\le x\le b^{nd}a$ and the height map $\mu\colon G\to\ZZ$ preserves the order, we have either $\mu(x)=0$ or $1$.  We will consider these two cases separately. 

First, suppose that $\mu(x)=0$. Then $x=b^k$ for some $k\in\NN$ by the normal form for elements of $P$. Since $h\le x$, we have $x=h p$ for some $p\in P$. In particular, $\mu(p)\ge 0$. We claim that  $\mu(p)=0$. Assume, looking for a contradiction,  that $\mu(p)>0$. Then \[h^{-1}=p x^{-1}=p b^{-k},\]   and the normal form for elements of  $G$ with positive height implies that $h^{-1}\in P$. 
Now
\[
h^{-1}=pb^{-k}\Longleftrightarrow ab^d=pb^{-k}a,
\]
and it makes sense to apply $\mu$ to the elements $ab^d$, $pb^{-k}a$, $pb^{-k}$ and $a$ of $P$.
But then
$1=\mu( ab^d)=\mu(pb^{-k}a) =\mu(pb^{-k})+\mu(a)>1$, a contradiction. It follows that $\mu(p)=0$, as claimed.
 Again, using the normal form, we have  $p=b^l$ for some $l\in\NN$.  Now 
$ab^d=b^lb^{-k}a=b^{l-k}a$, which implies that $ab^{-d}=b^{k-l}a$.  But this is impossible since $c\nmid d$.

Second, suppose that $\mu(x)=1$.  Since $x\le b^{nd}a$ for each $n\in\NN$, there exists $p_n\in P$ such that  we have $b^{nd}a= x p_n$. Then  $\mu(p_n)=0$. 
By the normal form, one has $x=b^{s_0}ab^{s_1}$ with $0\le s_0<d$ and $s_1\in \ZZ$ and $p_n=b^{\ell_n}$ for some $\ell_n\in \NN$. Now
\[
b^{nd}a= x p_n\Longleftrightarrow b^{nd} a = b^{s_0}ab^{s_1} b^{\ell_n}\Longleftrightarrow ab^{-nc}=b^{s_0}ab^{s_1+l_n}.
\]
This implies that $s_0=0$ and $-nc=s_1+l_n$ for all~$n\in \NN$ by uniqueness of normal forms. But now $\ell_n=-n c-s_1\to-\infty$ as $n\to\infty$, contradicting that $\ell_n\in\NN$. 

We conclude that no $x\in P$ satisfies 
$
h\le x\le b^{-nd}a
$
for all $n\in\NN$. In particular, $h$ has no least upper bound in~$P$. Thus $(H, P)$ is not quasi-lattice ordered. 
\end{proof}

\begin{lem}
Let $(G,P)$ and $(K,Q)$ be weakly quasi-lattice ordered groups. Suppose that $\mu\colon G\to K$ is an order-preserving group homomorphism. 
\begin{enumerate}
\item\label{lem-kernel-a} If $(G,P)$ is a quasi-lattice ordered group, then so is $(\ker\mu, \ker\mu\cap P)$.
\item Suppose that for $x,y\in P$ with $x\vee y<\infty$, we have $\mu(x)\vee \mu(y)=\mu(x\vee y)$. Then $(\ker\mu, \ker\mu\cap P)$ is a weakly quasi-lattice ordered group.
\end{enumerate}
\end{lem}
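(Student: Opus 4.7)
The plan is to first verify that $H \coloneqq \ker\mu$ together with $P_H \coloneqq \ker\mu\cap P$ is a valid setup for the order construction: $P_H$ is a unital subsemigroup of~$H$ and $P_H\cap P_H^{-1}\subseteq P\cap P^{-1}=\{e\}$. Moreover, for $u,v\in H$ the relation $u\le_H v$ (defined via $P_H$) coincides with $u\le_G v$ (defined via $P$), because $u^{-1}v\in P$ together with $u,v\in\ker\mu$ forces $u^{-1}v\in\ker\mu\cap P=P_H$. This identification will transfer upper bound arguments in $(G,P)$ directly to $(H,P_H)$.

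For part~(b), given $x,y\in P_H$ with a common upper bound $w\in P_H$, the element~$w$ is also a common upper bound in~$P$, so (QL2) in $(G,P)$ yields $z\coloneqq x\vee y\in P$. The join-preservation hypothesis then gives $\mu(z)=\mu(x)\vee\mu(y)=e\vee e=e$, so $z\in P_H$. The identification above makes $z$ an upper bound of $x,y$ in $P_H$, and any other $P_H$-upper bound~$u$ is a $P$-upper bound, so $z\le_G u$ and therefore $z\le_H u$. Hence $z$ is the least upper bound of $x,y$ in~$P_H$.

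For part~(a), I would instead establish (QL1) for $(H,P_H)$, since the excerpt records that (QL1) and (QL) are equivalent. Let $h\in P_H P_H^{-1}$ and write $h=ab^{-1}$ with $a,b\in P_H$. By (QL1) in $(G,P)$ there is a least upper bound~$z$ of~$h$ in~$P$, and the crux is to show $z\in\ker\mu$ \emph{without} assuming join preservation. Note that $a$ itself is an upper bound of~$h$ in~$P$, because $h^{-1}a=ba^{-1}a=b\in P$. Minimality of~$z$ gives $z\le_G a$, and $\mu$ is order-preserving, so $\mu(z)\le\mu(a)=e$ in~$K$. Since $z\in P$ forces $\mu(z)\in\mu(P)\subseteq Q$, we obtain $\mu(z)\in Q\cap Q^{-1}=\{e\}$, hence $z\in P_H$. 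That $z$ is the least upper bound of~$h$ in~$P_H$ then follows exactly as in~(b).

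The only real content is the squeezing step $\mu(z)=e$ via $Q\cap Q^{-1}=\{e\}$; this positivity observation is what lets (a) go through using just the order-preserving property of~$\mu$, whereas~(b) needs the extra join-preservation hypothesis because we no longer have the full strength of (QL1) to produce a convenient comparison element like~$a$. Beyond this, the argument is bookkeeping about the coincidence of~$\le_G$ and~$\le_H$ on~$H$.
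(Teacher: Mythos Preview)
Your proof is correct. The paper itself does not give a direct argument here but simply points to \cite[Lemma~3.3]{aHRT}, noting that the proof there already establishes both items even though only~(a) is stated. So your write-up is more self-contained than the paper's treatment.

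One small remark on your closing paragraph: your squeezing trick for part~(a) in fact also works for part~(b), so the join-preservation hypothesis is not actually needed there either. Given $x,y\in P_H$ with a common upper bound $w\in P_H$, the weak quasi-lattice property of $(G,P)$ gives $z=x\vee_G y\le_G w$; then $\mu(z)\le\mu(w)=e$ and $\mu(z)\in Q$ force $\mu(z)=e$ exactly as in your argument for~(a). So the ``convenient comparison element'' you mention is simply the given upper bound~$w$. Your proof via join preservation is of course still valid; it just uses more than is necessary.
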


\begin{proof}  This is what was actually proved in \cite[Lemma~3.3]{aHRT} though its statement consists only of item (\ref{lem-kernel-a}).
\end{proof}

\section{Nuclearity} \label{sec-nuclearity}

Let $(G,P)$ be a weakly quasi-lattice ordered group. Suppose that there are a discrete group $K$ and a group homomorphism $\mu\colon G\to K$. We write $u$ for the universal unitary representation generating $\Cst(K)$.  By Lemma~3.4 of \cite{aHRT} there is a nondegenerate\footnote{Full coactions of discrete groups may not be nondegenerate -- see the Erratum \cite{KQ-erratum}.  While nondegeneracy of $\delta_\mu$ is not mentioned in the statement of \cite[Lemma~3.4]{aHRT}, it is in fact proved there.} injective coaction $\delta_\mu\colon \Cst(G,P)\to \Cst(G,P)\otimes_\mint \Cst(K)$ such that
\begin{equation}
\label{coaction-from-mu}
\delta_\mu(w_p)=w_p\otimes u_{\mu(p)}\quad\text{for $p\in P$}.
\end{equation}

Let $\tau$ be the trace on $\Cst(K)$ such that $\tau(u_k)$ is $1$ if $k=e$ and $0$ otherwise. 
By \cite[Lemma~3.5]{aHRT}, 
\begin{equation}
\label{expectation-from-mu}
\Psi_\mu=(\id\otimes\tau)\circ\delta_\mu
\end{equation}
is a conditional expectation on $\Cst(G,P)$ such that 
\[\Psi_\mu(w_pw_q^*)=
\begin{cases}w_pw_q^* &\text{if $\mu(p)=\mu(q)$}\\0&\text{else},
\end{cases}
\]
and the range of $\Psi_\mu$  is \[\clsp\{w_pw_q^*:p,q\in P,\mu(p)=\mu(q)\}.\] Moreover, if $K$ is amenable, then $\Psi_\mu$ is faithful.

\begin{prop}\label{prop-iff-fpa-mu-cutdown} Let $(G,P)$  be a weak quasi-lattice. Suppose that there exist an amenable group $K$ and a homomorphism $\mu\colon G\to K$. Then $\Cst(G,P)$  is nuclear
if and only if $\clsp\{w_pw_q^*: p,q\in P\text{\ and\ }\mu(p)=\mu(q)\}$ is nuclear.
\end{prop}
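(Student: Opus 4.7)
The plan is to identify the closed span $B\coloneqq\clsp\{w_pw_q^*: p,q\in P, \mu(p)=\mu(q)\}$ with the fixed-point algebra of the coaction $\delta_\mu$, and then use the conditional expectation $\Psi_\mu$ together with a theorem of Quigg to handle each direction of the equivalence.

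For the forward direction, I would argue that since $\Psi_\mu=(\id\otimes\tau)\circ\delta_\mu$ is a conditional expectation of $\Cst(G,P)$ onto $B$ (by the formula for $\Psi_\mu$ recalled just before the proposition), nuclearity passes from $\Cst(G,P)$ to $B$. This is the standard fact that the range of a conditional expectation from a nuclear $\Cst$-algebra is nuclear (an easy consequence of the CPAP characterisation, or of Choi--Effros).

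For the converse, I would use Quigg's result \cite[Corollary~2.17]{Quigg-discrete}: if $K$ is a discrete amenable group and $\delta$ is a nondegenerate coaction of $K$ on a $\Cst$-algebra~$A$ with nuclear fixed-point algebra, then $A$ itself is nuclear. To apply this, I need that $B$ coincides with the fixed-point algebra $\Cst(G,P)^{\delta_\mu}$. The inclusion $B\subseteq \Cst(G,P)^{\delta_\mu}$ is immediate from
\[
\delta_\mu(w_pw_q^*)=w_pw_q^*\otimes u_{\mu(p)}u_{\mu(q)}^{-1},
\]
which equals $w_pw_q^*\otimes 1$ whenever $\mu(p)=\mu(q)$. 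For the reverse inclusion I use that $\Psi_\mu$ is faithful when $K$ is amenable (as recalled before the proposition): if $\delta_\mu(a)=a\otimes 1$, then applying $\id\otimes\tau$ gives $\Psi_\mu(a)=a$, so $a\in B$. Thus $\Cst(G,P)^{\delta_\mu}=B$, and Quigg's theorem yields nuclearity of $\Cst(G,P)$.

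Neither direction presents a real obstacle; the only point requiring a moment's care is the identification $B=\Cst(G,P)^{\delta_\mu}$, which is where amenability of $K$ (via faithfulness of $\Psi_\mu$) is used.
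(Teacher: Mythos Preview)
Your proof is correct and essentially identical to the paper's: both reduce to \cite[Corollary~2.17]{Quigg-discrete} after identifying $B$ with the fixed-point algebra $\Cst(G,P)^{\delta_\mu}$ via the expectation $\Psi_\mu$. One small correction to your commentary: the identification $B=\Cst(G,P)^{\delta_\mu}$ does not actually use faithfulness of $\Psi_\mu$---the step $\delta_\mu(a)=a\otimes 1\Rightarrow\Psi_\mu(a)=a\Rightarrow a\in B$ needs only that $B$ is the range of $\Psi_\mu$; amenability of $K$ enters solely through Quigg's theorem (and, as the paper notes, only for the direction ``$B$ nuclear $\Rightarrow$ $\Cst(G,P)$ nuclear'').
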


\begin{proof} Let $\delta_\mu$ be the coaction defined by \eqref{coaction-from-mu}.  The proposition follows from \cite[Corollory~2.17]{Quigg-discrete} after observing that  $\clsp\{w_pw_q^*:p,q\in P,\mu(p)=\mu(q)\}$ is the fixed-point algebra $\Cst(G,P)^{\delta_\mu}$ of the cosystem $(\Cst(G,P), K,\delta_\mu)$. (The amenability of $K$ is only needed for the direction $\Cst(G,P)^{\delta_\mu}$ nuclear implies $\Cst(G,P)$ nuclear.) 

For the observation, we recall from \cite[Lemma~1.3]{Quigg-discrete} that the fixed-point algebra $\Cst(G,P)^{\delta_\mu}$ of  $(\Cst(G,P), K,\delta_\mu)$ is 
\[
\Cst(G,P)^{\delta_\mu}=\{a\in \Cst(G,P):\delta_\mu(a)=a\otimes u_e\}.
\]
Let $p,q\in P$ such that $\mu(p)=\mu(q)$. Then $\delta_\mu(w_pw_q^*)=w_pw_q^*\otimes u_{\mu(p)} u_{\mu(q)}^*=w_pw_q^*\otimes u_e$, and it follows that $\clsp\{w_pw_q^*:p,q\in P,\mu(p)=\mu(q)\}\subseteq \Cst(G,P)^{\delta_\mu}$. 

For the other inclusion, let $a\in \Cst(G,P)^{\delta_\mu}$. Let $\Psi_\mu$ be the conditional expectation at \eqref{expectation-from-mu}. Then
\[\Psi_\mu(a)=(\id\otimes\tau)\circ \delta_\mu(a)=\id\otimes\tau(a\otimes u_e)=a,\]
and so $a$ is in the range $\clsp\{w_pw_q^*:p,q\in P,\mu(p)=\mu(q)\}$ of $\Psi_\mu$. Thus \[\clsp\{w_pw_q^*:p,q\in P,\mu(p)=\mu(q)\}=\Cst(G,P)^{\delta_\mu}\] as needed.
\end{proof}

\begin{lem}\label{lem-subalgebra}  Let $(G,P)$  be a weak quasi-lattice. Suppose that there exist a group $K$ and a homomorphism $\mu\colon G\to K$ such that $(\ker\mu, \ker\mu\cap P)$ is weakly quasi-lattice ordered. 
\begin{enumerate}
\item\label{lem-subalgebra-a}   If   $(\ker\mu, \ker\mu\cap P)$  is amenable, then $\Cst(\ker\mu, \ker\mu \cap P)$ is isomorphic to the $\Cst$\nb-subalgebra \[
B_e\coloneqq\clsp\{w_\alpha w_\beta^*:\alpha,\beta\in P, \mu(\alpha)=e=\mu(\beta)\}
\]
of $\Cst(G,P)$; the isomorphism is induced by the restriction of the universal representation $w:P\to \Cst(G,P)$ to $\ker\mu\cap P$. 
\item\label{lem-subalgebra-b}  If $B_e$ is nuclear, then $\Cst(\ker\mu, \ker\mu \cap P)$ is nuclear. In particular, $B_e$ and $\Cst(\ker\mu, \ker\mu \cap P)$ are isomorphic and $(\ker\mu, \ker\mu\cap P)$  is amenable. 
\end{enumerate}
\end{lem}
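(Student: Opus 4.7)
The plan is to construct a canonical surjective $*$-homomorphism $\pi\colon\Cst(H,H^+)\to B_e$, where $H=\ker\mu$ and $H^+=\ker\mu\cap P$, and then prove injectivity under each set of hypotheses.

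First I would verify that the restriction $w|_{H^+}$ of the universal Nica-covariant representation of~$P$ is itself a Nica-covariant isometric representation of~$H^+$ inside $\Cst(G,P)$. The essential check is that for $\alpha,\beta\in H^+$ the join $\alpha\vee\beta$ computed in $(H,H^+)$ agrees with the join computed in $(G,P)$, and that the first exists exactly when the second does; this matching is enabled by $(H,H^+)$ being weakly quasi-lattice ordered together with $H^+=H\cap P$ (in practice supplied by $\mu$ being order- and join-preserving, as in the preceding lemma). The universal property of $\Cst(H,H^+)$ then yields a $*$-homomorphism $\pi\colon\Cst(H,H^+)\to\Cst(G,P)$ with $\pi(v_\alpha)=w_\alpha$, where $v$ denotes the universal Nica-covariant representation of $H^+$; its image is exactly $B_e$, which is the closed span of $\{w_\alpha w_\beta^*\colon \alpha,\beta\in H^+\}$.

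For part~(a), injectivity of~$\pi$ follows by intertwining the canonical Nica conditional expectations. Write $E_H\colon\Cst(H,H^+)\to D_H\coloneqq\clsp\{v_\alpha v_\alpha^*\colon\alpha\in H^+\}$ for the Nica expectation, faithful by amenability of $(H,H^+)$. The Nica expectation~$E$ on $\Cst(G,P)$ restricts to a conditional expectation $F\colon B_e\to\clsp\{w_\alpha w_\alpha^*\colon\alpha\in H^+\}$, and a direct check on generators gives $F\circ\pi=\pi\circ E_H$. Moreover $\pi|_{D_H}$ is injective, since the Toeplitz representation $\pi_T^H\colon\Cst(H,H^+)\to\Tt(H,H^+)$ factors as $\rho\circ\pi$ with $\rho$ the compression to the invariant subspace $\ell^2(H^+)\subseteq\ell^2(P)$, and $\pi_T^H$ is well known to be injective on the diagonal. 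If $\pi(x)=0$ then $\pi(E_H(x^*x))=F(\pi(x^*x))=0$, so $E_H(x^*x)=0$ by injectivity on~$D_H$, and therefore $x=0$ by faithfulness of~$E_H$.

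For part~(b), I would invoke the appendix's characterisation (Proposition~\ref{prop-iff-diagonal}) that $\Cst(H,H^+)$ is nuclear if and only if the canonical diagonal expectation $E_H\otimes\id_A$ is faithful on $\Cst(H,H^+)\otimes_{\maxt}A$ for every unital $\Cst$\nb-algebra~$A$. Nuclearity of $B_e$ collapses $B_e\otimes_{\maxt}A$ into $B_e\otimes_{\mint}A$; the intertwining $F\circ\pi=\pi\circ E_H$ tensored with~$A$ together with injectivity of $(\pi\otimes\id_A)|_{D_H\otimes A}$ then allow the injectivity argument of part~(a) to be run on the tensor product, yielding faithfulness of $E_H\otimes\id_A$. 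Hence $\Cst(H,H^+)$ is nuclear; taking $A=\CC$ recovers faithfulness of~$E_H$, so $(H,H^+)$ is amenable, and part~(a) supplies the isomorphism $\Cst(H,H^+)\cong B_e$. The main obstacle is this nuclearity-transfer step: since $B_e$ is only a $\Cst$\nb-subalgebra of $\Cst(G,P)$, Proposition~\ref{prop-iff-diagonal} does not apply to it directly, so the faithfulness of~$F$ on $B_e\otimes_{\mint}A$ must be extracted carefully from nuclearity of~$B_e$ via the $\Cst(G,P)$-level expectations. A secondary subtlety, already present in Step~1, is the matching of joins between $(H,H^+)$ and $(G,P)$, which ultimately depends on sufficient regularity of~$\mu$.
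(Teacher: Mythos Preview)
Your approach to part~(a) is correct; the paper is slightly more direct (it notes that the Toeplitz representation $\pi_S$ of $\ker\mu\cap P$ factors as $\rho\circ\pi$ in your notation, and amenability makes $\pi_S$ injective, so $\pi$ is injective immediately), but your conditional-expectation argument works just as well.

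Part~(b), however, has a genuine gap that you flag but do not resolve. The argument of~(a) runs in the direction ``$\pi(x)=0$ together with faithfulness of $E_H$ gives $x=0$''; what you need in~(b) is the opposite, namely ``$(E_H\otimes\id_A)(x^*x)=0$ gives $x=0$''. From the tensored intertwining and nuclearity of $B_e$ you can at best reach $(F\otimes\id_A)\bigl((\pi\otimes\id_A)(x^*x)\bigr)=0$, but concluding $x=0$ would require faithfulness of $F\otimes\id_A$ on $B_e\otimes_{\mint}A$ \emph{and} injectivity of $\pi\otimes\id_A$ on the max tensor product, and nuclearity of $B_e$ supplies neither. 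The $\Cst(G,P)$-level expectations you hope to fall back on are likewise not known to be faithful under the hypotheses of the lemma. The paper avoids this entirely by a different route: since your compression $\rho\colon B_e\to\Tt(\ker\mu,\ker\mu\cap P)$ is surjective, nuclearity of $B_e$ passes to the quotient $\Tt(\ker\mu,\ker\mu\cap P)$, and then an external result of Li \cite[Theorem~6.44]{Li-book} (applicable because weak quasi-lattices satisfy the independence condition) lifts nuclearity from the Toeplitz algebra to $\Cst(\ker\mu,\ker\mu\cap P)$ and simultaneously yields amenability; part~(a) then supplies the isomorphism. This passage through the Toeplitz algebra and Li's theorem is the missing idea.
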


\begin{proof} 
The restriction $w|$ of $w:P\to \Cst(G,P)$ to $\ker\mu\cap P$ is a Nica covariant representation of $\ker\mu\cap P$, and hence induces a representation $\pi_{w|}: \Cst(\ker\mu, \ker\mu \cap P)\to B_e$ which is surjective.  Let $T$ and $S$  be the Toeplitz representations of $P$ and $\ker\mu\cap P$ on $\ell^2(P)$ and $\ell^2(\ker\mu\cap P)$, respectively. 
 We view $\ell^2(\ker\mu\cap P)$ as a closed subspace of $\ell^2(P)$. Since $\ell^2(\ker\mu\cap P)$ is invariant under $B_e$, we obtain a representation $\pi^\mu_T$ of $B_e$ on $\ell^2(\ker\mu\cap P)$ by restriction. Then $\pi^\mu_T \circ \pi_{w|}=  \pi_S$.

For (\ref{lem-subalgebra-a}), suppose that  $(\ker\mu, \ker\mu\cap P)$ is amenable. Then $\pi_S$ is injective and then  $\pi_{w|}$ is the required isomorphism.

For  (\ref{lem-subalgebra-b}), suppose that $B_e$ is nuclear.  Since $\pi^\mu_T$ is surjective, $\Tt(\ker\mu, \ker\mu \cap P)$ is a quotient of $B_e$ and hence is nuclear as well.  We claim that $\ker\mu\cap P$ satisfies the independence condition of \cite[Definition~6.30]{Li-book}. It then follows from \cite[Theorem~6.44]{Li-book} that  $\Cst(\ker\mu, \ker\mu \cap P)$ is nuclear and that $(\ker\mu, \ker\mu \cap P)$ is amenable. Then item~(\ref{lem-subalgebra-a}) completes the proof of ~(\ref{lem-subalgebra-b}). 

To see that $\ker\mu\cap P$ satisfies the independence condition and that \cite[Theorem~6.44]{Li-book} applies we need to do some reconciling.  In \cite{Li-book}, the full $\Cst$\nb-algebra $\Cst(Q)$ of a left-cancellative semigroup $Q$ is defined as a full inverse semigroup $\Cst$\nb-algebra (see also~\cite{Norling}). It is pointed out at the end of \S6.6 of \cite{Li-book} that this is the same as the $\Cst$\nb-algebra $\Cst_S(Q)$ introduced in  \cite{Li-JFA}.   This $\Cst_S(Q)$ is a quotient of a $\Cst$\nb-algebra  $\Cst(Q)$ defined in \cite[Definition~3.2]{Li-JFA}.  If $(K,Q)$ is a weak quasi-lattice, then the latter $\Cst(Q)$ is the $\Cst$\nb-algebra $\Cst(K,Q)$ universal for Nica-covariant representations  that we are studying in this paper (see \cite[p.~4313]{Li-JFA}). Further, on page~4327 of \cite{Li-JFA}, Li shows that if $(K,Q)$ is a weak quasi-lattice, then $\Cst(Q)$ and $C_S^*(Q)$ are isomorphic.  If $(K,Q)$ is a weak quasi-lattice, then $Q$ satisfies the independence condition (see Definition~6.20 and Lemmas~6.31 and~6.32 of \cite{Li-book}).  Thus  \cite[Theorem~6.4]{Li-book} applies as claimed. 
\end{proof}

It follows from  Lemma~\ref{lem-subalgebra}(\ref{lem-subalgebra-b}) that we can restate Theorem~\ref{T:nuclear} as:

\begin{thm}\label{thm-nuclearity2}
Let $(G,P)$ and $(K,Q)$ be weakly quasi-lattice ordered groups.
Suppose that $\mu\colon(G,P)\to (K,Q)$ is a controlled map in the sense of Definition~\textup{\ref{defn-less-controlled}}, that $K$ is amenable and that  the $\Cst$\nb-subalgebra
\[
B_e=\clsp\{w_\alpha w_\beta^*:\alpha,\beta\in P, \mu(\alpha)=e=\mu(\beta)\}
\]
of $\Cst(G,P)$ is nuclear. Then $\Cst(G,P)$ is nuclear. In particular, $(G,P)$ is amenable.
\end{thm}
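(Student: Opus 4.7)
The plan is to apply Proposition~\ref{prop-iff-fpa-mu-cutdown} (which uses the amenability of $K$) to reduce the theorem to showing that the $\mu$-fixed-point algebra
\[
B := \clsp\{w_p w_r^* : p,r \in P,\ \mu(p)=\mu(r)\}
\]
is nuclear. Once nuclearity of $\Cst(G,P)$ is in hand, amenability of $(G,P)$ will follow from faithfulness of the canonical conditional expectation (Proposition~\ref{prop-iff-diagonal}).

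To show $B$ is nuclear I will use the $Q$-grading induced by $\mu$: set $B_q := \clsp\{w_p w_r^* : \mu(p)=\mu(r)=q\}$ for each $q \in \mu(P)$, and check via Nica covariance that $B_q B_{q'} \subseteq B_{q \vee q'}$ when $q \vee q' < \infty$ in $Q$, and equals $0$ otherwise. The core of the argument is nuclearity of each $B_q$. Fix $q$ and use the decomposition $\mu^{-1}(q)\cap P = \bigsqcup_{\lambda \in \Lambda_q} S_\lambda$ from Definition~\ref{defn-less-controlled}. For finite $F \subseteq \Lambda_q$ and $n \in \NN$, let $A^{F,n}$ denote the $\Cst$-subalgebra of $B_q$ generated by $\{w_{s_n^\lambda}\, b\, w_{s_n^{\lambda'}}^* : \lambda,\lambda' \in F,\ b \in B_e\}$. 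Lemma~\ref{lem-vee-empty} gives $s_n^\lambda \vee s_n^{\lambda'} = \infty$ for $\lambda \neq \lambda'$, so Nica covariance forces $w_{s_n^\lambda}^* w_{s_n^{\lambda'}} = 0$; the assignment $e_{\lambda,\lambda'} \otimes b \mapsto w_{s_n^\lambda}\, b\, w_{s_n^{\lambda'}}^*$ then extends to a $*$-homomorphism from $M_{|F|}(B_e)$ onto $A^{F,n}$, so $A^{F,n}$ is nuclear. Factorising $s_n^\lambda = s_{n+1}^\lambda\, t_\lambda$ with $t_\lambda \in \ker\mu \cap P$ (using (\ref{defn-less-controlled-b2}) of Definition~\ref{defn-less-controlled}) gives $A^{F,n} \subseteq A^{F,n+1}$, and $A^{F,n} \subseteq A^{F',n}$ for $F \subseteq F'$ is clear. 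Any generator $w_p w_r^* \in B_q$ with $p \in S_{\lambda_1}$, $r \in S_{\lambda_2}$ will lie in $A^{\{\lambda_1,\lambda_2\},n}$ once $n$ is large enough that $p \geq s_n^{\lambda_1}$ and $r \geq s_n^{\lambda_2}$; hence $B_q$ is the inductive limit of the nuclear $A^{F,n}$'s, so nuclear.

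The final step is to assemble the $B_q$'s. For a finite subset $\mathcal{Q} \subseteq \mu(P)$ closed under the joins that exist in $Q$, set $B_\mathcal{Q} := \clsp\bigcup_{q\in\mathcal{Q}} B_q$; by the product rule this is a $\Cst$-subalgebra. I plan to show $B_\mathcal{Q}$ is nuclear by induction on $|\mathcal{Q}|$: choose $m$ maximal in $\mathcal{Q}$; for any $q \in \mathcal{Q}$ with $q \not\leq m$, the $\vee$-closure and maximality force $q \vee m$ not to exist in $Q$, so $B_q B_m = 0$, while $B_q B_m \subseteq B_m$ when $q \leq m$. Thus $B_m$ is a closed two-sided ideal in $B_\mathcal{Q}$; the quotient carries the analogous graded structure on $\mathcal{Q}\setminus\{m\}$ (with products that previously landed in $B_m$ now zero), and the inductive hypothesis together with the stability of nuclearity under extensions gives nuclearity of $B_\mathcal{Q}$. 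Since $B$ is the closure of the directed union of the $B_\mathcal{Q}$'s over all finite $\vee$-closed $\mathcal{Q}$, $B$ is nuclear.

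The main obstacle I foresee is this assembly step: one must verify that the subspaces $B_q$ sit linearly independently inside $B$ and that their finite sums $B_\mathcal{Q}$ are closed, so that the inductive quotients are well-defined and the induction hypothesis genuinely applies to them. The controlled-map data, through Lemma~\ref{lem-vee-empty} and the decreasing sequences $(s_n^\lambda)$, is exactly what furnishes both the matrix-unit structure in each $A^{F,n}$ and the orthogonality that makes the final assembly possible.
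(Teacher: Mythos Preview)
Your overall strategy matches the paper's: reduce to the fixed-point algebra via Proposition~\ref{prop-iff-fpa-mu-cutdown}, show each $B_q$ is nuclear via the matrix-unit subalgebras $A^{F,n}$ (these are exactly the paper's $B_{k,n,F}$ from Proposition~\ref{prop-structure-fpa}), and then assemble over finite $\vee$-closed sets $\mathcal{Q}\subseteq Q$ by induction on $|\mathcal{Q}|$. The first two stages are correct and essentially identical to the paper's.

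The gap is in the assembly step, specifically your choice of a \emph{maximal} element $m\in\mathcal{Q}$. You correctly verify that $B_m$ is then an ideal in $B_\mathcal{Q}$, but you need the quotient $B_\mathcal{Q}/B_m$ to be nuclear in order to invoke stability under extensions, and you try to feed the quotient back into the induction hypothesis. The problem is that $\mathcal{Q}\setminus\{m\}$ need not be $\vee$-closed when $m$ is maximal: if $q,q'\in\mathcal{Q}$ with $q\vee q'=m$, removing $m$ destroys closure. (Concretely, in $\NN^2$ take $\mathcal{Q}=\{(0,0),(1,0),(0,1),(1,1)\}$ and remove the maximal element $(1,1)$.) Hence the quotient is not of the form $B_{\mathcal{Q}'}$ for any $\vee$-closed $\mathcal{Q}'$, and your induction hypothesis, as formulated, does not apply to it. The ``analogous graded structure'' remark does not repair this without recasting the induction over a much more general class of algebras than the $B_{\mathcal{Q}}$'s.

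The paper removes a \emph{minimal} element $m$ instead. Then $J=\mathcal{Q}\setminus\{m\}$ \emph{is} $\vee$-closed (if $q\vee q'=m$ with $q\in J$ then $q\leq m$, contradicting minimality of $m$), so $C_J$ is a genuine $\Cst$-subalgebra to which the induction hypothesis applies directly. Moreover $C_J$, not $B_m$, is the ideal in $C_I$: for $k\in J$ one has $B_kB_m\subseteq B_{k\vee m}$ with $k\vee m\in J$ since $k\vee m\neq m$. Then $C_I=B_m+C_J$ by \cite[Theorem~3.1.7]{Murphy}, and Lemma~\ref{lem-sum} (nuclear subalgebra plus nuclear ideal is nuclear) finishes the step. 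This also dissolves the obstacle you flag at the end: one never needs linear independence of the $B_q$'s or a separate argument for closedness of $\sum_{k\in\mathcal{Q}}B_k$, because a $\Cst$-subalgebra plus an ideal is automatically closed.
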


Theorem~\ref{thm-nuclearity2} extends \cite[Corollary~8.3]{Li} in two ways: our notion of controlled maps is weaker and we only assume that $(G,P)$ and $(K,Q)$ are weak quasi-lattices. Before we prove the theorem, we observe that the hypothesis that the $\Cst$\nb-subalgebra $B_e$ is nuclear is often easily seen to hold. It is equivalent to nuclearity of $\Cst(\ker\mu, \ker\mu \cap P)$ by Lemma~\ref{lem-subalgebra} since this has $B_e$ as a quotient.

\begin{cor}\label{cor-kernel} Let $(G,P)$ and $(K,Q)$ be weakly quasi-lattice ordered groups.
Suppose that $\mu\colon(G,P)\to (K,Q)$ is a controlled map in the sense of Definition~\textup{\ref{defn-less-controlled}}. 
If the group generated by $\ker\mu\cap P$ is amenable, then $\Cst(\ker\mu, \ker\mu \cap P)$ is nuclear.
\end{cor}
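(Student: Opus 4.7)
The plan is short. The first step is to invoke the lemma immediately preceding this corollary: since $\mu$ is controlled in the sense of Definition~\ref{defn-less-controlled}, condition~(\ref{defn-less-controlled-a}) is exactly the hypothesis of that lemma, so $(\ker\mu,\ker\mu\cap P)$ is itself a weakly quasi-lattice ordered group. In particular, $\Cst(\ker\mu,\ker\mu\cap P)$ is defined as a Nica-covariance universal $\Cst$-algebra in the same sense as $\Cst(G,P)$.

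Next, I would let $H$ denote the subgroup of $\ker\mu$ generated by $\ker\mu\cap P$. This is precisely the group that is amenable by hypothesis. I would then verify that $(H,\ker\mu\cap P)$ is also a weak quasi-lattice: the relation $(\ker\mu\cap P)\cap(\ker\mu\cap P)^{-1}=\{e\}$ is a property of the semigroup itself and persists in any enveloping group, while (QL2) speaks only about pairs of elements of $\ker\mu\cap P$ and their upper bounds in $\ker\mu\cap P\subseteq H$, so it transfers verbatim from $(\ker\mu,\ker\mu\cap P)$ to $(H,\ker\mu\cap P)$. Using the observation already made in the introduction that $\Cst(G,P)$ depends only on the semigroup $P$, I conclude
\[
\Cst(\ker\mu,\ker\mu\cap P)\;\cong\;\Cst(H,\ker\mu\cap P).
\]

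The final step is then to apply \cite[Corollary~6.45]{Li-book}, quoted in the introduction: since $H$ is amenable as a group, $\Cst(H,\ker\mu\cap P)$ is nuclear, and hence so is $\Cst(\ker\mu,\ker\mu\cap P)$. There is really no serious obstacle in this argument; it is a bookkeeping reduction to the amenable-group case already handled in \cite{Li-book}. The one point that deserves to be made explicit is the passage from $\ker\mu$ to its (possibly much smaller) subgroup $H$, which is legitimate precisely because the universal semigroup $\Cst$-algebra depends only on the positive cone and not on the ambient group used to enclose it.
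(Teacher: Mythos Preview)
Your argument is correct, but it takes a different route from the paper's own proof. The paper proceeds by applying its main Theorem~\ref{thm-nuclearity2} to the pair $(G',P')$ where $G'$ is the subgroup generated by $P'=\ker\mu\cap P$: the identity map on $(G',P')$ is trivially a controlled map into the amenable group $G'$, its kernel intersected with $P'$ is $\{e\}$, so the associated $B_e$ is $\CC$, and Theorem~\ref{thm-nuclearity2} then yields nuclearity of $\Cst(G',P')=\Cst(\ker\mu,\ker\mu\cap P)$. You instead bypass the paper's main theorem entirely and appeal directly to \cite[Corollary~6.45]{Li-book}. Your route is more elementary in that it does not depend on the machinery of \S\ref{sec-nuclearity}; the paper's route, on the other hand, is a pleasant internal application that shows the corollary as an immediate consequence of the main result.

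One small correction: the lemma you invoke to conclude that $(\ker\mu,\ker\mu\cap P)$ is a weak quasi-lattice is not ``immediately preceding this corollary''; it sits at the end of \S\ref{sec-control}. What immediately precedes the corollary is Theorem~\ref{thm-nuclearity2} itself. This does not affect the mathematics, only the cross-reference.
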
 

\begin{proof} 
Let $G'$ be the subgroup of $G$ generated by $P'\coloneqq\ker\mu\cap P$. Then the identity map on $(G', P')$ is a controlled map.  Here $\CC=\Cst(\ker\id\cap P')\cong B_e(G', P')$ is nuclear. If  $G'$ is amenable, Theorem~\ref{thm-nuclearity2}  implies that $\Cst(\ker\mu, \ker\mu \cap P)$ is nuclear. 
\end{proof}

In view of the new definition of controlled maps, to prove Theorem~\ref{thm-nuclearity2} we  need to re-analyse the structure of the fixed-point algebra 
\[\Cst(G,P)^{\delta_\mu}=\clsp\{w_pw_q^*: p,q\in P\text{\ and\ }\mu(p)=\mu(q)\}.\]  For this, we start with the analogues of \cite[Lemmas~3.6 and~3.9]{aHRT}.

 \begin{prop}\label{prop-structure-fpa}
 Let $(G,P)$ and $(K,Q)$ be weakly quasi-lattice ordered groups.
Suppose that $\mu\colon(G,P)\to (K,Q)$ is a controlled map in the sense of Definition~\textup{\ref{defn-less-controlled}}. 
\begin{enumerate} 
\item
 Let $k\in Q$, let $F$ be a finite subset of $\Lambda_k$ and let $n\in\NN$. Set
\begin{gather*}
B_{k}\coloneqq\clsp\{w_p w_q^*: p,q\in P, \mu(p)=k=\mu(q)\},\\
B_{k,n}\coloneqq\clsp\{w_{s^\lambda_n} dw^*_{s^{\rho}_n}\colon \lambda,\rho\in\Lambda_k \text{\ and\ }d\in B_e\},\\
B_{k,n, F}\coloneqq\lsp\{w_{s^\lambda_n}dw^*_{s^{\rho}_n}\colon \lambda,\rho\in F \text{\ and\ }d\in B_e\}.
\end{gather*}
Then 
\begin{enumerate} \item\label{prop-structure-fpa-ai} $B_k$, $B_{k,n}$ and $B_{k,n, F}$ are $\Cst$\nb-subalgebras of $\Cst(G,P)^{\delta_\mu}$; 
\item\label{prop-structure-fpa-aii} $B_{k,n, F}$ is isomorphic to $M_{F}(\CC)\otimes B_e$;
\item\label{prop-structure-fpa-aiii} $B_{k,n}=\varinjlim B_{k,n,F}$  is isomorphic to $\Kk(\ell^2(\Lambda_k))\otimes B_e$;
\item\label{prop-structure-fpa-aiv} 
$B_{k,n}\subseteq B_{k,n+1}$ and $B_k=\varinjlim B_{k,n}$. 
\end{enumerate}
\item
 Let $\Ii$ be the set of all finite sets $I\subseteq Q$ that are closed under $\vee$ in the sense that $s,t\in I$ and $s\vee t<\infty$ implies that $s\vee t\in I$.
Let $I\in\Ii$ and set \[C_I\coloneqq \overline\lsp\{w_pw^*_q:\mu(p)=\mu(q)\in I\}.\] 
 Then 
 \begin{enumerate}
 \item\label{prop-structure-fpa-bi} $C_I$ is a $\Cst$\nb-subalgebra of $\Cst(G,P)^{\delta_\mu}$;
 \item\label{prop-structure-fpa-bii}  $C_I=\sum_{k\in I} B_k$\footnote{That $C_I=\sum_{k\in I} B_k$  was also asserted in  \cite[Lemma~3.9]{aHRT}, but the proof given there is not correct; the mistake is in the  statement that ``the finite span of closed subalgebras is closed''.};   and 
 \item\label{prop-structure-fpa-biii} $\Cst(G,P)^{\delta_\mu}=\varinjlim_{I\in\Ii}C_I$.
 \end{enumerate}
\end{enumerate}
\end{prop}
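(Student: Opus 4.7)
The plan is to verify (1) and (2) in sequence, with (1) being primarily a structural calculation using Nica covariance together with property~(a) of Definition~\ref{defn-less-controlled}, and (2)(bii) being the main subtle point (requiring a careful induction to avoid the incorrect ``finite sum of closed subalgebras is closed'' reasoning flagged in the footnote).

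For (1), I would first note that Nica covariance gives the product formula $(w_p w_q^*)(w_r w_s^*) = w_{pq^{-1}(q\vee r)}\,w_{sr^{-1}(q\vee r)}^*$ when $q\vee r<\infty$, and $=0$ otherwise; property~(a) forces $\mu(q\vee r) = \mu(q)\vee\mu(r)$, so when $\mu(p)=\mu(q)=k=\mu(r)=\mu(s)$ both remaining indices again have $\mu$-value $k$. This makes $B_k$ a $\Cst$\nobreakdash-subalgebra. For $B_{k,n,F}$ and $B_{k,n}$ the key input is Lemma~\ref{lem-vee-empty}: distinct $\lambda,\rho\in\Lambda_k$ force $s_n^\lambda \vee s_n^\rho=\infty$, so $w_{s_n^\rho}^* w_{s_n^{\lambda'}} = \delta_{\rho\lambda'}\cdot 1$, and the generators $w_{s_n^\lambda} d w_{s_n^\rho}^*$ multiply as matrix units with entries in $B_e$; this gives $B_{k,n,F}\cong M_F(\CC)\otimes B_e$ and, in the union over $F$, $B_{k,n}\cong\Kk(\ell^2(\Lambda_k))\otimes B_e$. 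For $B_{k,n}\subseteq B_{k,n+1}$ I would use that $(s_n^\lambda)$ is decreasing, so $s_n^\lambda = s_{n+1}^\lambda t_{\lambda,n}$ with $t_{\lambda,n}\in\ker\mu\cap P$, giving $w_{s_n^\lambda} d w_{s_n^\rho}^* = w_{s_{n+1}^\lambda}(w_{t_{\lambda,n}} d w_{t_{\rho,n}}^*) w_{s_{n+1}^\rho}^*$. Finally, for $B_k=\varinjlim B_{k,n}$ I would take any spanning $w_p w_q^*$, use Definition~\ref{defn-less-controlled}(b) to find $\lambda,\rho\in\Lambda_k$ with $p\in S_\lambda,\,q\in S_\rho$ and $n$ with $p\geq s_n^\lambda,\,q\geq s_n^\rho$, then factor $p=s_n^\lambda a$, $q=s_n^\rho b$ with $a,b\in\ker\mu\cap P$, so that $w_p w_q^* = w_{s_n^\lambda}(w_a w_b^*) w_{s_n^\rho}^*\in B_{k,n}$.

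For (2)(bi) the same product formula together with $\vee$-closedness of $I$ shows $B_k\cdot B_{k'}\subseteq B_{k\vee k'}\subseteq C_I$ when $k\vee k'<\infty$, and $=0$ otherwise, so $C_I$ is closed under multiplication. Part (biii) then follows: the family $\Ii$ is directed by inclusion (the $\vee$-closure of $I\cup J$ remains finite and $\vee$-closed), $I\subseteq J$ manifestly gives $C_I\subseteq C_J$, and every spanning element $w_p w_q^*$ of $\Cst(G,P)^{\delta_\mu}$ lies in $C_{\{\mu(p)\}}$.

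The main obstacle is (bii), since one cannot appeal to ``the sum of closed subalgebras is closed''. I would induct on $|I|$: the base case $|I|=1$ is immediate. For $|I|>1$, pick $k\in I$ that is \emph{minimal} for the partial order on $K$ and set $I':=I\setminus\{k\}$. The set $I'$ remains $\vee$-closed: if $s,t\in I'$ satisfied $s\vee t=k$, then $s,t\leq k$, and minimality of $k$ in $I$ would force $s=k$ or $t=k$, contradicting $s,t\in I'$. By the inductive hypothesis $C_{I'}=\sum_{l\in I'}B_l$ is closed. For $l\in I'$, $B_k B_l\subseteq B_{k\vee l}$ when $k\vee l<\infty$ (and $=0$ otherwise); the same minimality argument shows $k\vee l\in I'$ in the finite case, so $B_k B_l\subseteq C_{I'}$, and similarly $B_l B_k\subseteq C_{I'}$. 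Hence $C_{I'}$ is a two-sided ideal in the $*$-algebra $B_k+C_{I'}$ and, by continuity of multiplication, also in its norm-closure $\overline{B_k+C_{I'}}$. Applying inside $\overline{B_k+C_{I'}}$ the standard fact that a $\Cst$\nobreakdash-subalgebra plus a closed ideal is again a $\Cst$\nobreakdash-subalgebra then yields that $B_k+C_{I'}$ is closed, and hence equal to $C_I$, completing the induction.
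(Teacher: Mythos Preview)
Your proposal is correct and follows essentially the same approach as the paper's proof: the same Nica-covariance product formula for part~(1), the same use of Lemma~\ref{lem-vee-empty} to obtain matrix units, the same decreasing-sequence factorisation for $B_{k,n}\subseteq B_{k,n+1}$ and $B_k=\varinjlim B_{k,n}$, and the same induction on $|I|$ for (bii) via removing a minimal element and invoking the ``$\Cst$\nobreakdash-subalgebra plus closed ideal is closed'' fact (the paper cites \cite[Theorem~3.1.7]{Murphy}). The only cosmetic difference is that the paper works directly with the ambient algebra $C_I$ (showing $C_J\triangleleft C_I$ and that $C_I$ is generated by $C_J$ and $B_m$), whereas you first take the closure $\overline{B_k+C_{I'}}$ and then identify it with $C_I$; these are the same argument.
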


\begin{proof} To see that  $B_k$ is a $\Cst$\nb-subalgebra, take spanning elements $w_pw^*_q$ and $w_rw_s^*\in B_k$. Then, using Nica covariance, we deduce that 
\[
w_pw_q^*w_rw_s^*=\begin{cases}
w_{pq^{-1}(q\vee r)}w_{sr^{-1}(q\vee r)}^*&\text{if $q\vee r<\infty$}
\\
0&\text{else.}
\end{cases}
\] 
Suppose that $q\vee r<\infty$.  Then $w_pw_q^*w_rw_s^*\neq 0$.  Since $\mu$ is a $\vee$-preserving homomorphism we have
\[
\mu(pq^{-1}(q\vee r))=\mu(p)\mu(q)^{-1}\big(\mu(q)\vee\mu(r)\big)=kk^{-1}k=k
\] 
and $\mu(sr^{-1}(q\vee r))=k$ similarly. Thus $w_pw_q^*w_rw_s^*\in B_k$ and it follows that $B_k$ is a $\Cst$\nb-subalgebra of  $\Cst(G,P)^{\delta_\mu}$. Similarly, using Nica covariance and Lemma~\ref{lem-vee-empty}, we have 
\begin{align*}
(w_{s^\lambda_n} w_\alpha &w_\beta^*w^*_{s^{\rho}_n})(w_{s^{\lambda'}_n} w_\sigma w_\tau^*w^*_{s^{\rho'}_n})\\
&=\begin{cases}
w_{s^\lambda_n} w_{\alpha \beta^{-1}(\beta\vee\sigma)}w^*_{\tau\sigma^{-1}(\beta\vee \sigma)}w^*_{s^{\rho'}_n}
&\text{ if $\rho=\lambda'$ and $\beta\vee\sigma<\infty$}\\
0&\text{else.}
\end{cases}
\end{align*}
Since $\mu(\alpha \beta^{-1}(\beta\vee\sigma))=e=\mu(\tau\sigma^{-1}(\beta\vee \sigma))$ it follows that $B_{k,n}$ is a $\Cst$\nb-subalgebra of  $\Cst(G,P)^{\delta_\mu}$.

Next we show that $B_{k,n, F}$ is isomorphic to  $M_{F}(\CC)\otimes B_e$. This will complete the proof of (\ref{prop-structure-fpa-ai}) as well as giving (\ref{prop-structure-fpa-aii}). 
Let $\lambda, \lambda', \rho,\rho'\in F$. Then
\[
w_{s_n^\lambda}w^*_{s_n^{\rho}}w_{s_n^{\lambda'}}w^*_{s_n^{\rho'}}=\begin{cases}
w_{s_n^\lambda}w^*_{s_n^{\rho'}} &\text{if $\rho=\lambda'$}\\
0&\text{else}
\end{cases}
\]
using Lemma~\ref{lem-vee-empty}.
Thus $\{w_{s_n^\lambda}w^*_{s_n^{\rho}}\colon \lambda,  \rho\in F\}$ is a set of matrix units in $B_{k,n}$. 
This gives an injective homomorphism $\theta\colon M_{F}(\CC)\to \overline{B_{k,n,F}}$ that maps the matrix units $\{E_{\lambda,\rho}\colon \lambda,\rho\in F\}$  in $M_{F}(\CC)$ to $\{w_{s_n^\lambda}w^*_{s_n^{\rho}}\colon \lambda,\rho\in F\}$ in $B_{k,n,F}$. It is  easy to check that the formula   
\[
\psi(d)=\sum_{\lambda\in F}w_{s_n^\lambda}dw^*_{s_n^\lambda}
\] 
gives an injective  homomorphism $\psi:B_e\to \overline{B_{k,n,F}}$ such that 
\[\theta(E_{\lambda,\rho})\psi(d)=w_{s_n^\lambda}d w_{s_n^{\rho}}^*=\psi(d)\theta(E_{\lambda,\rho}).\]
It follows that the ranges of $\theta$ and $\psi$ commute. Now the universal property of the maximal tensor product gives a homomorphism $\theta\otimes \psi$ of $M_{F}(\CC)\otimes B_e$ into $\overline{B_{k,n,F}}$.

By \cite[Theorem B.18]{tfb} 
\[
M_{F}(\CC)\otimes B_e
=\lsp\{E_{\lambda,\rho}\otimes d: \lambda,\rho\in F \text{\ and\ } d\in B_e\},
\]
 with no closure. So the range of $\theta\otimes \psi$ is spanned by $\theta(E_{\lambda,\rho})\psi(d)=w_{s_n^{\lambda}}dw^*_{s_n^{\rho}}$, and hence the range  is  $B_{k,n,F}$. (In particular,  $B_{k,n,F}$ is  closed.)  Thus  $B_{k,n, F}$  is isomorphic to $M_{F}(\CC)\otimes B_e$.
 
 \medskip

Now direct the finite subsets of $\Lambda_k$ by inclusion. Consider a spanning element $w_{s^\lambda_n} d w^*_{s^\rho_n}\in B_{k,n}$. Take $F=\{\lambda,\rho\}$. Then $w_{s^\lambda_n} d w^*_{s^{\rho}_n}\in B_{k,n, F}$. Thus $B_{k,n}=\varinjlim_F B_{k,n, F}$. It follows that $B_{k,n}$  is isomorphic to $\Kk(\ell^2(\Lambda_k))\otimes B_e$, giving  (\ref{prop-structure-fpa-aiii}).

\medskip
To see that $B_{k,n}\subseteq B_{k,n+1}$, consider a  spanning element $w_{s^\lambda_n} w_\alpha w_\beta^*w^*_{s^\rho_n}\in B_{k,n}$. We have $s_{n+1}^\lambda\leq s_n^\lambda$, that is, there exists $\sigma\in P$ such that $s_n^\lambda=s_{n+1}^\lambda \sigma$. Notice that $\mu(\sigma)=e$ because $\mu(s_n^\lambda)=k=\mu(s_{n+1}^\lambda)$. Similarly, there exists $\tau\in\ker\mu\cap P$ such that $s_n^\rho=s_{n+1}^\rho \tau$. Now 
$w_{s^\lambda_n} w_\alpha w_\beta^*w^*_{s^\rho_n}=w_{s^\lambda_{n+1}} w_{\sigma\alpha} w_{\tau\beta}^*w^*_{s^\rho_{n+1}}\in B_{k, n+1}$. Thus $B_{k,n}\subseteq B_{k,n+1}$ 

Now take a spanning element $w_pw_q^*\in B_k$. By item (\ref{defn-less-controlled-b2}) of Definition~\ref{defn-less-controlled} there exist $\lambda, \rho\in \Lambda_k$ and $m,n\in\NN$ such that $s_m^\lambda\leq p$ and $s^\rho_n\leq q$. Say $m\leq n$. Then  $s_n^\lambda\leq  s_m^\lambda\leq p$. So there exist $\alpha,\beta\in\ker\mu\cap P$ such that $p=s_n^\lambda\alpha$ and $q=s_n^\rho\beta$.
Now $w_pw_q^*=w_{s_n^\lambda}w_\alpha w_\beta^* w_{s_n^\rho}^*\in B_{k,n}$. Thus $B_k=\varinjlim B_{k,n}$, giving (\ref{prop-structure-fpa-aiv}).

\medskip

To see that $C_I$ is a $\Cst$\nb-subalgebra, let $p,q,r,s\in P$ such that $\mu(p)=\mu(q)\in I$ and $\mu(r)=\mu(s)\in I$. Then
\[
  w_pw^*_qw_rw^*_s=
  \begin{cases}w_{pq^{-1}(q\vee r)}w^*_{sr^{-1}(q\vee r)}&\text{if $q\vee r<\infty$}\\
  0&\text{else.}
  \end{cases}
\]
Suppose that $w_pw^*_qw_rw^*_s\neq 0$. Then $q\vee r<\infty$. Since $\mu$ is a controlled map  we have 
    \[
    \mu(pq^{-1}(q\vee r))=\mu(p)\mu(q)^{-1}\mu(q\vee r)=\mu(q)\vee \mu(r)=\mu(sr^{-1}(q\vee r)).
    \]
Since $I$ is closed under $\vee$ we have  $\mu(q)\vee \mu(r)\in I$, and  hence \[w_pw^*_qw_rw^*_s=w_{pq^{-1}(q\vee r)}w^*_{sr^{-1}(q\vee r)}\in C_I.\] It follows that  $C_I$ is a $\Cst$\nb-subalgebra. This gives (\ref{prop-structure-fpa-bi}).
\medskip

Let $I\in \Ii$. We prove item (\ref{prop-structure-fpa-bii}) by induction on $|I|$. If $|I|=1$, then $I=\{k\}$ for some $k\in Q$ and $C_I=B_k$.  Now let $n\geq 1$ and assume: if $J\in \Ii$ and $|J|=n$, then $C_J=\sum_{k\in J} B_k$. 

Suppose that $|I|=n+1$.  Let $m$ be a minimal element of $I$ and let $J=I\setminus\{m\}$.  Then $J\in \Ii$ and $|J|=n$. By the induction hypothesis,
\[
C_J=\sum_{k\in J} B_k.
\]
Let $k\in J$.
We claim that $B_kB_m\subseteq C_J$. To see the claim, let $p,q\in \mu^{-1}(k)\cap P$ and $r,s\in \mu^{-1}(m)\cap P$. Then the product of the spanning elements is
\[
w_pw_q^*w_rw_s=\begin{cases}w_{pq^{-1}(q\vee r)}w^*_{sr^{-1}(q\vee r)}&\text{if $q\vee r<\infty$}\\0&\text{otherwise.}\end{cases}
\]
If $q\vee r=\infty$, then $w_pw_q^*w_rw_s=0\in B_k$. So suppose that  $q\vee r<\infty$. Then $k\vee m=\mu(q)\vee\mu(r)<\infty$. Since $m$ is minimal in $I$ and $k\neq m$, we have  $m<k\vee m\in I$. Thus $k\vee m \in J$. Now
\[
\mu\big( pq^{-1}(q\vee r) \big)=\mu(p)\mu(q)^{-1}\big(\mu(q)\vee \mu(r)\big)=\mu(q)\vee \mu(r)= k\vee m\in J.
\]
Similarly, $\mu(sr^{-1}(q\vee r))\in J$. Thus  $w_{pq^{-1}(q\vee r)}w^*_{sr^{-1}(q\vee r)} \in C_J$, and the claim follows.

The claim implies that 
\[C_JB_m=\Big(\sum_{k\in J} B_k\Big) B_m=\sum_{k\in J} B_kB_m\subseteq C_J.
\]
Similarly, $B_m C_J\subseteq C_J$. It follows that $C_J$  is a closed two-sided ideal in the $\Cst$\nb-algebra~$C_I$. Clearly $C_I$ is generated by $C_J$ and the $\Cst$\nb-subalgebra $B_m$ of $C_I$. By \cite[Theorem~3.1.7]{Murphy} we get that $C_I=C_J+B_m$, which proves item (\ref{prop-structure-fpa-bii}).  
\medskip

Finally, for item (\ref{prop-structure-fpa-biii}), we note that $\Ii$ is partially ordered by inclusion and is directed: if $I_1, I_2\in\Ii$, then they are both contained in $I_1\cup I_2\cup (I_1\vee I_2)\in \Ii$. Take a spanning element $w_pw_q^*\in \Cst(G,P)^{\delta_\mu}$. Then $\mu(p)=\mu(q)$, and $w_pw_q^*\in C_I$ where $I=\{\mu(p)\}$. Thus $\Cst(G,P)^{\delta_\mu}=\varinjlim_{I\in\Ii}C_I$.
\end{proof}

The next step is to prove the following lemma from which we can then deduce that the $C_I$'s are nuclear provided that the $B_k$'s are nuclear for all $k\in Q$.

\begin{lem}\label{lem-sum} 
Suppose that $B$ is a nuclear $\Cst$\nb-subalgebra of a $\Cst$\nb-algebra~$C$ and $I\subseteq C$ is a nuclear ideal. Then $B+I$ is a nuclear $\Cst$\nb-algebra. 
\end{lem}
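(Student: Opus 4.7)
The plan is to use the standard fact that an extension of nuclear $\Cst$\nb-algebras is nuclear, applied to the obvious short exact sequence having~$I$ as the ideal and $B/(B\cap I)$ as the quotient.

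First I would verify that $B+I$ is actually a $\Cst$\nb-subalgebra of $C$. It is clearly a $*$\nb-invariant subspace, and closedness follows from \cite[Theorem~3.1.7]{Murphy} (the same result invoked at the end of the proof of Proposition~\ref{prop-structure-fpa}). Since $I$ is an ideal of $C$, it is in particular an ideal of $B+I$, so there is a short exact sequence
\[
0\longrightarrow I\longrightarrow B+I\longrightarrow (B+I)/I\longrightarrow 0.
\]
Next I would identify the quotient. The inclusion $B\hookrightarrow B+I$ composed with the quotient map gives a surjective $*$\nb-homomorphism $B\to (B+I)/I$ with kernel $B\cap I$, so the standard isomorphism theorem yields $(B+I)/I\cong B/(B\cap I)$. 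Because~$B$ is nuclear and quotients of nuclear $\Cst$\nb-algebras are nuclear, the quotient $(B+I)/I$ is nuclear.

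Finally, I would apply the stability of nuclearity under extensions: if both the ideal~$I$ and the quotient $(B+I)/I$ in a short exact sequence are nuclear, then so is the middle term $B+I$. This is a classical result (see, e.g., Choi--Effros, or \cite[II.9.6.3(vi)]{Bla}) and finishes the proof. The only point where one has to be a little careful is the closedness of $B+I$, but this is exactly what \cite[Theorem~3.1.7]{Murphy} provides; there is no real obstacle in the argument.
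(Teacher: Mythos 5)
Your proposal is correct and follows essentially the same route as the paper: closedness of $B+I$ via \cite[Theorem~3.1.7]{Murphy}, the short exact sequence $0\to I\to B+I\to (B+I)/I\to 0$, the identification $(B+I)/I\cong B/(B\cap I)$, and stability of nuclearity under quotients and extensions. There is nothing to add.
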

\begin{proof} Notice that $B+I$ is a $\Cst$\nb-subalgebra of $C$ by \cite[Theorem~3.1.7]{Murphy}. The canonical inclusion $I\subseteq B+I$ together with the quotient map $B+I\to (B+I)/I$ produce a short exact sequence $$0\to I\to B+I\to(B+I)/I\to 0.$$ Now $(B+I)/I$ is nuclear because it is isomorphic to $B/(B\cap I)$ and nuclearity passes to quotients. Since~$I$ is nuclear as well and an extension of a nuclear $\Cst$\nb-algebra by a nuclear ideal is again nuclear, we conclude that $B+I$ is also nuclear. 
\end{proof}

\begin{proof}[Proof of Theorem~\ref{thm-nuclearity2}]
By Proposition~\ref{prop-iff-fpa-mu-cutdown}, it suffices to show that the fixed-point algebra \[\Cst(G,P)^{\delta_\mu}=\clsp\{w_pw_q^*: p,q\in P\text{\ and\ }\mu(p)=\mu(q)\}\] is nuclear. To do this, we go back to the analysis of the structure of $\Cst(G,P)^{\delta_\mu}$ in Proposition~\ref{prop-structure-fpa}. Direct limits of nuclear $\Cst$\nb-algebras are nuclear. Since $\Cst(G,P)^{\delta_\mu}=\varinjlim_{I\in\Ii}C_I$, it suffices to show that each $C_I$ is nuclear. We will prove this by induction on $|I|$. For $|I|=1$, we have $C_I=B_k$ for some $k\in I$. Because $B_k$ is a direct limit  $\varinjlim_{n\in\NN}B_{k,n}$, it suffices to show that each $B_{k,n}$ is nuclear. But $B_{k,n}$ is isomorphic to $\Kk(\ell^2(\Lambda_k))\otimes B_e$. Since $B_e$ is nuclear by assumption, each $B_{k,n}$ is nuclear.  Thus $B_k$ is nuclear, as needed. 

Now fix $n\in \NN$, $n>1$ and suppose that $C_{I}$ is nuclear for all $I\in\Ii$ with $|I|=n$. Let $I\in \Ii$ with $|I|=n+1$. Take a minimal element $m\in I$ and consider $J=I\setminus\{m\}$. Then $|J|=n$ and so $C_J$ is nuclear by the induction hypothesis. As in the proof of Proposition~\ref{prop-structure-fpa}, $C_J$ is an ideal in $C_I$ and $C_I=B_m+C_J$. Since $B_m$ is also nuclear, Lemma~\ref{lem-sum} implies that $C_I$ is nuclear as desired.

It follows that $\Cst(G,P)^{\delta_\mu}=\varinjlim_{I\in\Ii}C_I$ is nuclear as well.  Thus $\Cst(G,P)$ is nuclear.

It follows from the nuclearity of $\Cst(G,P)$ that $(G,P)$ is amenable \cite[Theorem~6.42]{Li-book}. (That  \cite[Theorem~6.42]{Li-book} applies was discussed in the proof of Lemma~\ref{lem-subalgebra}.) \end{proof}

\section{Examples}\label{sec:examples}
\subsection{Examples from  \S\ref{sec-control}}

\begin{example}
\label{EE:F}
 Let $\FF$ be the free group on a countable set of generators and let $\FF^+$ be the  unital  free semigroup. Also  let  $\mu\colon\FF\to \ZZ$ be the homomorphism that sends each generator to $1$. We observed in Example~\ref{ex-free-group} that $\mu$ is a controlled map into an abelian group  with $\ker\mu\cap \FF^+=\{e\}$.  Since $\Cst(\ker\mu, \ker\mu \cap \FF^+)\cong\CC$ is nuclear, Theorem~\ref{T:nuclear} implies that $\Cst(\FF, \FF^+)$ is nuclear. (Nica already observed that $(\FF, \FF^+)$ is amenable.)
\end{example}

\begin{example} Consider the subsemigroup $\PP=\{1, bw: w\in\FF^+\}$ of $\FF$ from Example~\ref{ex-scarparo}, and let $\mu\colon\FF\to \ZZ$ be the homomorphism that sends each generator to $1$. Then $\mu$ is a controlled map into an abelian group with $\ker\mu\cap  \PP=\{e\}$. Thus $\Cst(\FF, \PP)$ is nuclear and $(\FF, \PP)$ is amenable by Theorem~\ref{T:nuclear}. 
\end{example}

We already know from \cite[Corollary~A.7]{CaHR} that the  quasi-lattice ordered group $(G,P)$ associated to the Baumslag-Solitar group of Example~\ref{example-BS} is amenable.    We can now use Theorem~\ref{T:nuclear} to see that $\Cst(G,P)$ is nuclear  for all Baumslag-Solitar groups. Together with Proposition~\ref{prop-dilian}, this  shows that Theorem~\ref{T:nuclear} gives new examples of nuclear  semigroup $\Cst$\nb-algebras that do not come from quasi-lattice orders.

\begin{example}
Consider the Baumslag-Solitar groups $G=\langle a, b: ab^c=b^{d}a\rangle$  where either 1) $c,d\geq 1$  or 2) $c\geq 1$ and $d\leq -1$.
Let  $P$ be the semigroup generated by $a$ and $b$.
 We showed in Examples~\ref{example-BS} and~\ref{ex-BS-with-chains} that the height map $\mu\colon G\to \ZZ$ such that $\mu(a)=1$ and $\mu(b)=0$ is a controlled map into an abelian group. Here 
\[
\ker\mu\cap P=\{b^t:t\in\NN\}
\]
is isomorphic to $\NN$. It follows that the subgroup of $G$ generated by $\ker\mu\cap P$ is isomorphic to $\ZZ$ and hence amenable. 
Thus  $\Cst(\ker\mu, \ker\mu \cap P)$ is  nuclear by Corollary~\ref{cor-kernel}. Now Theorem~\ref{T:nuclear} implies that $\Cst(G,P)$ is nuclear and that $(G,P)$ is amenable.
\end{example}

\subsection{HNN extensions, controlled maps and one-relator groups}

Here we consider certain weak quasi-lattice orders arising from HNN extensions of free groups. We were particularly inspired by the analysis of one-relator groups with positive presentation due to Li--Omland--Spielberg \cite{LOS} and the work of an Huef--Raeburn--Tolich on HNN extensions \cite{aHRT} of quasi-lattice orders. Recall that a one-relator group is a group $G=\langle \Sigma; r\rangle$ with generating set~$\Sigma$ and a single defining relator~$r$. We say that the presentation $(\Sigma; r)$ is \emph{positive} if $r$ is a relation of the form $u=v$, where~$u$ and~$v$ are nonempty words in~$\Sigma$. In~\cite{LOS},  Li, Omland and Spielberg analysed when the semigroup $P=\langle \Sigma; u=v\rangle ^+$ defined by the same presentation is right LCM and provided sufficient conditions on the presentation $(\Sigma;u=v)$ for the pair~$(G,P)$ to be a weak quasi-lattice. Under some conditions on the words~$u$ and~$v$,  they found a graph model for the boundary quotient~$\partial \Cst(G, P)$ if $|\Sigma|\geq 3$ and concluded, in particular, that~$\Cst(G, P)$ and $\partial \Cst(G, P)$ are nuclear~ \cite[Corollary~3.10]{LOS}. Here we treat two different classes of presentations $(\Sigma; u=v)$, which are not covered by the results of~\cite{LOS}. If~$|\Sigma|=2$, we obtain nuclearity of the semigroup $\Cst$\nb-algebras of Baumslag--Solitar semigroups as a particular case. More precisely, we will prove the following:

\begin{cor}\label{cor:one-relator-v} Let~$S$ be a nonempty set and let~$\Sigma=S\sqcup\{\sigma\}$. Let~$u$ and $w$ be nonempty words in~$S$ and let~$G=\langle\Sigma; u\sigma=\sigma w \rangle$. Let~$P=\langle \Sigma; u\sigma=\sigma w\rangle^+$ be the semigroup defined by the same relation. Then~$(G,P)$ is an amenable quasi-lattice ordered group and $\Cst(G, P)$ is nuclear.
\end{cor}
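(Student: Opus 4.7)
The plan is to apply Theorem~\ref{T:nuclear} with $K=\ZZ$ and the height map $\mu\colon G\to\ZZ$ sending $\sigma\mapsto 1$ and $s\mapsto 0$ for every $s\in S$. Both sides of the defining relation $u\sigma=\sigma w$ have height~$1$, so $\mu$ is well defined, and $\mu(P)\subseteq\NN$ gives that $\mu$ is order preserving. The group $G$ is the HNN extension $F(S)\ast_{u=w}$ of the free group on~$S$ with stable letter~$\sigma$, so Britton's lemma is available throughout.

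The first substantive step is to extract a unique normal form for elements of~$P$: iterating the rewriting rule $vu\sigma = v\sigma w$ yields, for every $p\in P$ with $\mu(p)=q$, a presentation
\[
p=v_0\sigma v_1\sigma\cdots v_{q-1}\sigma v_q,\qquad v_i\in F(S)^+,
\]
in which no $v_i$ with $i<q$ has $u$ as a suffix, and Britton's lemma then forces uniqueness. Using this normal form and mimicking the Baumslag--Solitar analysis of \cite[Theorem~2.11]{SBS} and \cite[Lemma~3.4]{CaHR}, I would show that $(G,P)$ is quasi-lattice ordered: $x\vee y<\infty$ forces the normal forms of $x$ and $y$ to agree on their initial segments up to $\min\{\mu(x),\mu(y)\}$ layers, with $x\vee y$ being the one having the longer tail. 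In particular $\mu(x\vee y)=\mu(x)\vee\mu(y)$, so axiom~(\ref{cm-1-new}) of Definition~\ref{def-controlled} holds.

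For axiom~(\ref{cm-2-new}), set
\[
\Sigma_q=\{v_0\sigma v_1\sigma\cdots v_{q-1}\sigma:v_i\in F(S)^+,\ v_i\text{ does not end in }u\}
\]
(the normal-form elements with $v_q=e$). Every $x\in\mu^{-1}(q)\cap P$ dominates its truncation in $\Sigma_q$, giving (\ref{cm-3-new}); and if $s,t\in\Sigma_q$ admit a common upper bound $p\in P$, then the first $q$ entries of $p$'s normal form equal both the entries of $s$ and of $t$, so uniqueness of normal forms forces $s=t$, giving (\ref{cm-4-new}). Hence $\mu$ is a controlled map in the sense of the stronger Definition~\ref{def-controlled}, and therefore also in the sense of Definition~\ref{defn-less-controlled}.

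It remains to verify that the subalgebra $B_e=\clsp\{w_\alpha w_\beta^*:\alpha,\beta\in\mu^{-1}(0)\cap P\}$ is nuclear. By Britton's lemma, a positive element of $G$ lies in the subgroup $\langle S\rangle\cong F(S)$ iff it has height~$0$, so $F(S)\cap P=F(S)^+$ and the order on $F(S)$ induced from $G$ coincides with the standard one on $(F(S),F(S)^+)$. The inclusion $F(S)^+\hookrightarrow P$ is therefore Nica covariant for $(F(S),F(S)^+)$, inducing a surjection $\Cst(F(S),F(S)^+)\twoheadrightarrow B_e$. Nuclearity of $\Cst(F(S),F(S)^+)$ from Example~\ref{EE:F} thus passes to $B_e$, and Theorem~\ref{T:nuclear} yields nuclearity of $\Cst(G,P)$ together with amenability of $(G,P)$. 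The main technical burden lies in the normal-form analysis underpinning the quasi-lattice and controlled-map properties, where positivity of both $u$ and $w$ is decisive in preventing the infinite descending chains of Example~\ref{ex-BS-with-chains}.
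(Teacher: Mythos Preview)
Your strategy coincides with the paper's: identify $G$ with the HNN extension of the free group $F(S)$ relative to $\langle u\rangle\cong\langle w\rangle$ with stable letter~$\sigma$, use the height map as a controlled map into $(\ZZ,\NN)$, observe that $\ker\mu\cap P=F(S)^+$, and invoke Theorem~\ref{T:nuclear} together with the nuclearity of $\Cst(F(S),F(S)^+)$ from Example~\ref{EE:F}. The paper is more economical only in that it does not redevelop the normal-form analysis: Proposition~\ref{prop:hnn_extension} verifies the three hypotheses of \cite[Theorem~4.1]{aHRT} (via the coset representatives of Lemma~\ref{lem:left_cosets}) to get the quasi-lattice property, and Proposition~\ref{prop:nuclearity_hnn} cites \cite[Theorem~5.1 and Lemma~5.5]{aHRT} for the controlled-map axioms. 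Your direct rewriting approach would reproduce precisely these results, at the cost of carrying the technical burden you flag.

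One description needs adjusting. The claim that $x\vee y$ is ``the one having the longer tail'' is not correct in general. When $\mu(x)<\mu(y)$ the right statement (from \cite[Lemma~5.5]{aHRT}) is $x\vee y=\mathrm{stem}(y)\,r$ for some $r\in F(S)^+$ that need not be the tail of~$y$; when $\mu(x)=\mu(y)$ one has $\mathrm{stem}(x)=\mathrm{stem}(y)$ and $x\vee y=\mathrm{stem}(x)\,(p\vee_{F(S)^+}q)$ where $p,q$ are the tails. For a concrete counterexample take $S=\{a\}$, $u=w=a$ (so $G\cong\ZZ^2$), $x=\sigma aa$, $y=\sigma\sigma a$: then $x\vee y=\sigma\sigma aa$ equals neither $x$ nor~$y$. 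This imprecision is harmless for what you actually need, namely $\mu(x\vee y)=\mu(x)\vee\mu(y)$ and your description of $\Sigma_q$, both of which are correct as stated.
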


\begin{cor}\label{cor:non-Noetherian} Let~$S$ be a nonempty set and let~$\Sigma=S\sqcup\{\sigma\}$. Let~$u$ and $w$ be nonempty words in~$S$ and let~$G=\langle \Sigma; u\sigma w=\sigma  \rangle$. Let~$P=\langle \Sigma; u\sigma w=\sigma \rangle^+$ be the semigroup defined by the same relation. Then~$(G,P)$ is an amenable weakly quasi-lattice ordered group and $\Cst(G, P)$ is nuclear.
\end{cor}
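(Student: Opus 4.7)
The plan is to generalise the analysis of the Baumslag--Solitar weak quasi-lattice in Example~\ref{ex-BS-with-chains}, which is exactly our situation with $S=\{b\}$, $u=b^d$, $w=b^c$, $\sigma=a$. First I would recognise $G$ as the HNN extension of the free group $F=F(S)$ along $\phi\colon\langle u\rangle\to\langle w^{-1}\rangle$, $u\mapsto w^{-1}$, with stable letter $\sigma$, since the defining relation rearranges as $\sigma^{-1}u\sigma=w^{-1}$. Britton's lemma (the HNN normal form theorem) then gives an embedding $F\hookrightarrow G$ and a reduced form for each element of $G$. Using the rewrites $u\sigma w\rightleftarrows\sigma$ and the derived identity $\sigma w^{-k}=u^k\sigma$, I would establish a normal form for $P$: every nontrivial $p\in P$ has the shape $p=p_0\sigma p_1\sigma\cdots\sigma p_{q-1}\sigma p_q$ with $p_0,\dots,p_{q-1}\in F^+$ satisfying a suitable reducedness condition relative to $(u,w)$, and with $p_q$ ranging over the specific subset of $F$ for which the whole product is positive.

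The main technical step is to prove (QL2). Given $x,y\in P$ with a common upper bound, comparison of their normal forms forces their initial $\sigma$-segments to agree; once these agree, the two elements lie in a linearly ordered chain inside $P$ (built from the descending sequence $\sigma\geq u\sigma\geq u^2\sigma\geq\cdots$ produced by the relation) and a least upper bound can be written down explicitly. This parallels \cite[Lemma~3.4]{CaHR} and \cite[Proposition~2.10]{SBS} and is the most delicate part of the argument.

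I would then define the height map $\mu\colon G\to\ZZ$ by $\mu(\sigma)=1$ and $\mu(s)=0$ for $s\in S$; this is well-defined because both sides of the relation have height~$1$. The explicit description of $\vee$ from the previous step shows $\mu$ is order-preserving and satisfies $\mu(x\vee y)=\mu(x)\vee\mu(y)$ whenever $x\vee y<\infty$, verifying Definition~\ref{defn-less-controlled}(\ref{defn-less-controlled-a}). For (\ref{defn-less-controlled-b}) I take $\Lambda_0=\{e\}$, $S_e=F^+$, and the constant sequence $s^e_n=e$; for $q\geq 1$ I let $\Lambda_q$ index the admissible tuples $(p_0,\dots,p_{q-1})$ arising in the normal form, let $S_\lambda$ consist of the elements of $\mu^{-1}(q)\cap P$ whose first $q$ normal-form slots match~$\lambda$, and set
\[
s^\lambda_n=p_0\sigma p_1\sigma\cdots\sigma p_{q-1}\sigma w^{-n}=p_0\sigma p_1\sigma\cdots p_{q-1}u^n\sigma.
\]
This is a decreasing sequence in $P$ cofinal from below in $S_\lambda$; the verification is the direct analogue of Example~\ref{ex-BS-with-chains}, using $\sigma w^{-n}=u^n\sigma\in P$.

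Finally, the HNN embedding identifies $\ker\mu$ with $F$ and $\ker\mu\cap P$ with $F^+$, so $\Cst(\ker\mu,\ker\mu\cap P)\cong\Cst(F,F^+)$ is nuclear by Example~\ref{EE:F}. Since $\ZZ$ is amenable and $\mu$ is controlled in the sense of Definition~\ref{defn-less-controlled}, Theorem~\ref{T:nuclear} delivers nuclearity of $\Cst(G,P)$, from which amenability of $(G,P)$ follows. The hard part is the normal form analysis together with (QL2); once these are in hand, the verification of the controlled map axioms and the conclusion reduce to a direct extension of the argument already carried out for Baumslag--Solitar semigroups in Example~\ref{ex-BS-with-chains}.
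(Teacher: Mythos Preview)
Your proposal is correct and follows essentially the same route as the paper: identify $G$ with the HNN extension of the free group $\FF$ on $S$ relative to $\phi^-\colon\langle u\rangle\to\langle w\rangle$, $u\mapsto w^{-1}$; characterise membership in $\FF^{+*}$ via normal forms (the paper's Lemma~\ref{lem:normalform_picture}); prove (QL2) by showing that any two elements of $P$ with a common upper bound are comparable (Proposition~\ref{prop:height_map}); verify that the height map is controlled with exactly your choice $s_n^\lambda=\mathrm{stem}\cdot w^{-n}$; and conclude via Theorem~\ref{T:nuclear} using $\ker\mu\cap P=\FF^+$.

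One small correction: the HNN embedding does \emph{not} identify $\ker\mu$ with $F$---the kernel of the height map is the much larger subgroup of elements with total $\sigma$-exponent zero. What is true, and what you actually need, is $\ker\mu\cap P=F^+$; since $\Cst(\ker\mu,\ker\mu\cap P)$ depends only on the positive cone, this suffices to invoke Example~\ref{EE:F}.
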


We should observe that the proof of Corollary~\ref{cor:one-relator-v} is essentially done in~\cite{aHRT}. However, we will include it here since our ideas to prove Corollary~\ref{cor:non-Noetherian} are also based on HNN extensions.

We briefly introduce the definition of a Higman--Neumann--Neumann extension (HNN extension, for short). Let~$H$ be a group, and let~$A$ and~$B$ be sugroups of~$H$ with an isomorphism~$\phi\colon A\to B$. The \emph{HNN extension} of~$H$ relative to $A, B$ and $\phi$ is the group $$H^*\coloneqq \langle H, t : t^{-1}at=\phi(a), a\in A\rangle.$$ The group $H$ is the \emph{base} of~$H^*$ and~$t$ is the \emph{stable letter}. We refer the reader to \cite{lyndon1977combinatorial} for further details on this construction. Following~\cite{aHRT}, we let~$\theta\colon H^*\to \ZZ$ be the \emph{height} map. That is, $\theta$ is the group homomorphism that vanishes on~$H$ and maps~$t$ to~$1$. We let~$X$ and~$Y$ be complete sets of left coset representatives for~$H/A$ and~$H/B$, respectively. Every element $h^*\in H^*$ has a unique representation of the form 
\begin{eqnarray}\label{eq:normalform}\begin{aligned}
h^*=h_0t^{\epsilon_1}h_1t^{\epsilon_2}\cdots h_{k-1}t^{\epsilon_k}h_k\qquad(k\geq 0),
\end{aligned}\end{eqnarray} where $\epsilon_i=\pm 1$, $h_{i-1}\in X$ if $\epsilon_i=1$ and $h_{i-1}\in Y$ if~$\epsilon_i=-1$, and~$h_k\in H$. The representation \eqref{eq:normalform} is the (right) \emph{normal form} of~$h^*$ relative to $X$ and $Y$ (see \cite[Theorem~2.1]{lyndon1977combinatorial}).

Let~$\FF$ be the free group on a set of generators~$S$ and let~$u$ and~$w$ be nonempty words in~$S$. We consider the subgroups of~$\FF$ given by~$A\coloneqq \langle u\rangle$ and~$B\coloneqq \langle w\rangle$. Our goal is to analyse the HNN extensions determined by the group isomorphisms $\phi^+,\phi^-\colon A\to B$ given by $u\mapsto w$ and~$u\mapsto w^{-1}$, respectively. We begin with an appropriate choice of left coset representatives for~$\FF/A$ and~$\FF/B$.

\begin{lem}\label{lem:left_cosets} Let~$\FF$ be the free group on a set of generators~$S$ and let~$A =\langle u \rangle$ be the subgroup of~$\FF$ generated by a nonempty word~$u$ in $S$. Define $$\Omega_u\coloneqq\{\alpha\in\FF^+: \alpha=e\text{ or } \alpha \text{ is a nontrivial right divisor of }u\}.$$ 
\begin{enumerate}
\item\label{lem:left_cosets-a}  For $\alpha\in\Omega_u\setminus\{e\}$, let~$X_{\alpha}$ be the set of words $h$ in~$\FF$ for which the largest common suffix with~$u$ in their reduced form is~$\alpha$ so that, in particular,  $h\cdot u$ involves no cancellation. Also, let~$X_e$ be the set of words~$h\in \FF$ for which the products $h\cdot u$ and~$h\cdot u^{-1}$ involve no cancellation. Then $$X\coloneqq\bigsqcup_{\substack{\alpha\in\Omega_u}} X_\alpha$$ is a complete set of left coset representatives for~$\FF/A$.

\item\label{lem:left_cosets-b}   Suppose that $h\in X$ satisfies $hA\cap\FF^+\neq\emptyset$. Then~$h\in\FF^+$ and~$h\leq_{\FF^+}q$ for all~$q\in hA\cap\FF^+$.
\end{enumerate}
\end{lem}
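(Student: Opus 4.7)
The plan is to handle (a) and (b) in sequence. The key observation underlying both parts is: for every $h \in X$, the product $h \cdot u$ involves no cancellation, so that by induction on $n$ the element $h u^n$ for $n \geq 1$ is the simple concatenation of $h$ with $n$ copies of the positive word $u$; in particular it ends with the full suffix $u$.

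For uniqueness in (a), I would suppose $h, h' \in X$ lie in the same $A$-coset, so $h' = h u^n$ for some $n \in \ZZ$, and argue $n = 0$. If $n \neq 0$, the key observation applied to the element with the larger exponent shows that it ends with the full suffix $u$. But no element of $X$ can end with $u$: an $X_\alpha$ representative with $\alpha \neq e$ would force $\alpha = u$, which is excluded from $\Omega_u$ because $\alpha$ must be a \emph{nontrivial} (proper) right divisor of $u$; and an $X_e$ representative would suffer full cancellation in $h' \cdot u^{-1}$. Hence $n = 0$ and $h = h'$.

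For existence in (a), given a reduced $g \in \FF$ I would produce $n$ with $g u^n \in X$ by strong induction on $|g|$. Based on how the reduced form of $g$ ends, there are essentially three types of cases. Either $g$ already lies in some $X_\alpha$ (take $n=0$); or $g$ ends with the full word $u$ or $u^{-1}$, so that $g u^{-1}$ or $g u$ is strictly shorter and induction applies; or $g$ ends with $\beta^{-1}$ where $\beta$ is a \emph{proper} nonempty prefix of $u$, chosen maximal. In this last subcase, writing $u = \beta u''$ and $g = g'\beta^{-1}$, the product $gu$ reduces to $g' u''$, and the maximality of $\beta$ can be shown to place $gu$ in $X_{u''}$, so $n = 1$ works. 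Verifying that $gu$ indeed lies in $X_{u''}$ (rather than in some different $X_\alpha$ or outside $X$ altogether) is the main technical step; it follows from maximality of $\beta$ ruling out the two ways the largest common suffix of $gu$ with $u$ could extend past $u''$.

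For (b), I would take $q \in hA \cap \FF^+$, so $q = h u^m$ for some $m \in \ZZ$. If $m < 0$, then $h = q u^{-m}$ is a product of two positive words involving no cancellation, hence a positive word ending with the full suffix $u$; this contradicts $h \in X$ by the same argument used in uniqueness. Hence $m \geq 0$, giving $h^{-1} q = u^m \in \FF^+$, i.e., $h \leq_{\FF^+} q$. Finally, since $h \in X$ the reduced form of $h u^m$ is the concatenation of $h$ with $m$ copies of $u$; as $u^m$ consists entirely of positive letters, the positivity of $q$ forces every letter of $h$ to be positive, i.e., $h \in \FF^+$.
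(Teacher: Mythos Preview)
Your proof is correct and follows essentially the same approach as the paper's. For uniqueness in (a) and for (b), your arguments are virtually identical to the paper's: both hinge on the observation that no $h\in X$ can end with the full word~$u$, and that $h\cdot u^m$ is a cancellation-free concatenation for $m\geq 0$. The only organisational difference is in the existence part of (a): the paper first writes $g = h_0 u^m$ with $h_0$ not ending in any nontrivial power of $u$ (stripping all trailing powers at once), and then classifies $h_0$ directly into $X_e$, some $X_\alpha$, or your case~3; your inductive case~2 achieves the same stripping one power at a time. One small remark on your case~3: of the two obstructions to $gu \in X_{u''}$, only the first (that $g'\cdot u''$ might cancel) is ruled out by maximality of~$\beta$; the second (that the common suffix with~$u$ might properly extend $u''$) is ruled out by the reducedness of $g = g'\beta^{-1}$, since $g'$ cannot end in the last letter of~$\beta$.
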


\begin{proof}  We first establish (\ref{lem:left_cosets-a}). Let~$\alpha,\beta\in\Omega_u$. Clearly $X_\alpha\cap X_\beta=\emptyset$ if $\alpha\ne \beta$. Now take~$h\in X_\alpha$ and~$h'\in X_\beta$ and suppose that~$hA=h'A$. Then~$h^{-1}h'\in A$ and we may assume, without loss of generality, that~$$h=h'u^m,\qquad m\geq 0.$$ We claim that~$m=0$. This follows because $h'u$ is in reduced form, and hence there are no cancellations. This implies that  $h$ has $u$ as a suffix, contradicting that $h \in X_\alpha.$
Hence we must have~$m=0$, which gives~$h=h'$.

Now let~$h'\in \FF$ be arbitrary. We will prove that $h'A=hA$ for some~$h\in X$. We can write $h'=h_0 u^m$, where~$m\in \ZZ$, $h_0$ does not end in a nontrivial power of~$u$ and the product~$h_0 \cdot u^m$ is in reduced form. So~$h 'A=h_0A$. If the product~$h_0 \cdot u^{-1}$ is not reduced, then~$h_0\in X_{\alpha}\subseteq X$ for some nontrivial proper right divisor~$\alpha$ of~$u$. Suppose that~$h_0\cdot  u^{-1}$ is reduced. If~$h_0\cdot u $ is also reduced, then~$h_0\in X_e\subseteq X$. Otherwise, i.e. if $h_0u$ is not reduced,  there is a largest proper nontrivial prefix~$\beta$ of~$u$ such that~$\beta^{-1}$ is a suffix of~$h_0$ in its reduced form. Set~$\alpha=\beta^{-1}u$. Let~$h_1\in \FF$ be such that~$h_0=h_1\beta^{-1}$, where the product involves no cancellation. Then~$h_0=h_1(\alpha u^{-1})$ and $h_0A=h_1\alpha A$. Since the product~$h_1\cdot \beta^{-1}$ involves no cancellation, $\alpha$ is indeed the largest common suffix of~$u$ and~$h_1\alpha$. Thus~$h_1\alpha\in X_\alpha$ and~$h'A=h_0A=h_1\alpha A$ as desired.

To prove  (\ref{lem:left_cosets-b}), let~$h\in X$ with~$hA\cap \FF^+\neq\emptyset$. Take~$q\in hA\cap \FF^+$. We will show that~$e\leq h\leq q$ in~$(\FF,\FF^+)$. Let~$m\in \ZZ$ be such that~$h^{-1}q=u^{m}$. We must have~$m\geq 0$ because the product $q\cdot u$ involves no cancellation and~$h$ does not end in a nontrivial power of~$u$ in its reduced form. So~$q=hu^m$ with~$m\geq 0$. Since~$h\in X$, the product~$h\cdot u^m$ involves no cancellation. Therefore, $h\in \FF^+$ and~$h\leq_{\FF^+} q$ as asserted.
\end{proof}

\subsubsection{Case $u\sigma=\sigma w$} We analyse the HNN extension of~$\FF$ relative to $A=\langle u\rangle$, $B=\langle w\rangle$ and the isomorphism $\phi^+\colon A\to B$ that sends~$u$ to~$w$. We begin by showing that~$(\FF^*, \FF^{+*})$ is quasi-lattice ordered by applying Lemma~\ref{lem:left_cosets} and \cite[Theorem~4.1]{aHRT}, which gives the following three sufficient conditions for and HNN-extension $(H^*, P^*)$ as above to be quasi-lattice ordered:
\begin{itemize}
\item[(1)] $\phi(A \cap P) = B \cap P.$
\item[(2)] Every left coset $hA \in H/A$ such that $hA\cap P \neq\emptyset$ has a minimal coset representative $p\in P.$
\item[(3)] for every $x,y \in B$ such that $x \vee y < \infty,$ we have $x \vee y \in B.$
\end{itemize}

\begin{prop}\label{prop:hnn_extension} Let $\FF$ be the free group on a set of generators $S$. Let~$A$ and $B$ be the subgroups of~$\FF$ generated by the nonempty words~$u$ and~$w$ in $S$, respectively. Let $\phi^+\colon A\to B$ be the isomorphism that sends~$u$ to~$w$. Then~$(\FF^*,\FF^{+*})$ is quasi-lattice ordered, where~$\FF^*$ is the HNN extension of~$\FF$ relative to~$A$, $B$ and~$\phi^+$, and~$\FF^{+*}$ denotes the subsemigroup of~$\FF^*$ generated by~$S\cup\{t\}$.

\begin{proof} In order to prove that~$(\FF^*, \FF^{+*})$ is quasi-lattice ordered, we will apply~\cite[Theorem~4.1]{aHRT}. Let~$X$ and~$Y$ be the complete sets of left coset representatives for~$\FF/A$ and~$\FF/B$, respectively, as in Lemma~\ref{lem:left_cosets}. Clearly, $\phi^+(A\cap \FF^+)=B\cap \FF^+$, which is condition (1) of  \cite[Theorem~4.1]{aHRT}. Condition (3) of \cite[Theorem~4.1]{aHRT} is trivially true in our setting. Now suppose that~$h\in X$ satisfies~$hA\cap \FF^+\neq\emptyset$. It follows from Lemma~\ref{lem:left_cosets} that $h\in\FF^+$ and $h\leq_{\FF^+} q$ whenever~$q\in hA\cap \FF^+$. This gives precisely condition (2) of~\cite[Theorem~4.1]{aHRT}. Therefore, $(\FF^+,\FF^{+*})$ is quasi-lattice ordered by \cite[Theorem~4.1]{aHRT}.
\end{proof}
\end{prop}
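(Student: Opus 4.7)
The plan is to apply \cite[Theorem~4.1]{aHRT}, which supplies three sufficient conditions for the HNN extension $(H^*, P^*)$ built from $(H, P)$ and an isomorphism $\phi\colon A\to B$ to be quasi-lattice ordered: (1) $\phi(A\cap P)=B\cap P$; (2) every left coset $hA\in H/A$ with $hA\cap P\neq\emptyset$ has a minimal positive representative; and (3) whenever $x,y\in B$ admit $x\vee y<\infty$ in $H$, the join lies in $B$. In our setting $(H,P)=(\FF,\FF^+)$, $A=\langle u\rangle$, $B=\langle w\rangle$, and $\phi=\phi^+$, so my task reduces to verifying these three items.

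I would dispatch conditions~(1) and~(3) quickly. For~(1), since $u$ and $w$ are nonempty positive words, $A\cap \FF^+$ and $B\cap \FF^+$ consist precisely of the nonnegative powers of $u$ and $w$, and $\phi^+$ sends $u^n$ to $w^n$ for all $n\geq 0$. For~(3), $B$ is cyclic, so any two of its elements have the form $w^m,w^n$ with integers $m,n$; reordering so that $m\leq n$ gives $w^{-m}w^n=w^{n-m}\in \FF^+$, hence $w^m\leq_{\FF^+}w^n$ and $x\vee y=w^n\in B$. Thus in the free group setting these two conditions are essentially automatic.

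The substantive point is condition~(2), and here I would invoke Lemma~\ref{lem:left_cosets} above. Part~(a) of that lemma furnishes an explicit complete set $X$ of left coset representatives for $\FF/A$, keyed by the largest common suffix with $u$ in reduced form. Part~(b) says exactly that whenever $h\in X$ and $hA\cap \FF^+\neq\emptyset$, then already $h\in \FF^+$ and $h\leq_{\FF^+} q$ for every $q\in hA\cap \FF^+$; this $h$ is therefore the minimum element of $hA\cap \FF^+$. Applying the same lemma with $w$ in place of $u$ yields a complete set $Y$ of left coset representatives for $\FF/B$ with the analogous property. Feeding $X$, $Y$, and the two easy verifications above into \cite[Theorem~4.1]{aHRT} gives the conclusion.

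The main technical obstacle in this whole argument has already been absorbed into Lemma~\ref{lem:left_cosets}, which required careful bookkeeping of reduced forms and cancellations in the free group $\FF$. Granted that lemma, the proof of Proposition~\ref{prop:hnn_extension} is essentially an organisational matter: identify the three conditions of \cite[Theorem~4.1]{aHRT}, and check each of them using the material already assembled.
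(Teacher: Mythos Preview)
Your proposal is correct and follows essentially the same route as the paper: both apply \cite[Theorem~4.1]{aHRT}, dispatch conditions~(1) and~(3) as immediate (you supply a little more detail than the paper's ``clearly'' and ``trivially true''), and invoke Lemma~\ref{lem:left_cosets}(b) for condition~(2). The only cosmetic difference is that you mention constructing $Y$ by applying the lemma with $w$ in place of $u$, which the paper leaves implicit.
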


Before proving the next proposition, we introduce some notation. Let~$H^*$ be the HNN extension of $H$ relative to~$A$, $B$ and $\phi\colon A\to B$. Let $h^*\in H^*$ with normal form~$$h^*=h_0t^{\epsilon_1}h_1t^{\epsilon_2}\ldots h_{k-1}t^{\epsilon_k}h_k\qquad(k\geq 0).$$ The \emph{stem} of~$h^*$, denoted by~$\mathrm{stem}(h^*)$, is the element $h_0t^{\epsilon_1}h_1t^{\epsilon_2}\ldots h_{k-1}t^{\epsilon_k}.$ As in~\cite[Theorem~5.1]{aHRT}, we will use that the height map~$\theta\colon\FF^*\to\ZZ$ is controlled to deduce nuclearity of~$\Cst(\FF^{*},\FF^{+*})$.

\begin{prop}\label{prop:nuclearity_hnn}  Let $(\FF^{*},\FF^{+*})$ be as in Proposition~\textup{\ref{prop:hnn_extension}}. Then the height map $\theta\colon\FF^*\to \ZZ$ is controlled and, in particular, $\Cst(\FF^{*},\FF^{+*})$ is nuclear.
\end{prop}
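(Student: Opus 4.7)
The strategy is to apply Theorem~\ref{T:nuclear} with $\mu=\theta$ and $K=\ZZ$. Amenability of $\ZZ$ is automatic, and Proposition~\ref{prop:hnn_extension} already gives that $(\FF^*,\FF^{+*})$ is quasi-lattice ordered, so the natural target is Definition~\ref{def-controlled} (the stronger variant). Once $\theta$ is shown to be controlled, the remaining hypothesis---nuclearity of $\Cst(\ker\theta,\ker\theta\cap\FF^{+*})$---will follow at low cost.

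To verify Definition~\ref{def-controlled}, I would work with the HNN normal form~\eqref{eq:normalform} relative to the coset representatives $X$ and $Y$ built in Lemma~\ref{lem:left_cosets}. Order-preservation is immediate since $\theta(\FF^{+*})\subseteq\NN$. For axiom~(\ref{cm-1-new}), given $x,y\in\FF^{+*}$ with $x\vee y<\infty$, the plan is to show by induction on $\min(\theta(x),\theta(y))$ that the stems of $x$ and $y$ must coincide up to the appropriate height with the stem of any common upper bound; a least common upper bound is then obtained by extending the shorter of $x,y$ by an element of $\FF^+$, yielding $\theta(x\vee y)=\max(\theta(x),\theta(y))$. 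For axiom~(\ref{cm-2-new}), I would identify $\Sigma_n$ with the set of elements of the form $h_0 t h_1 t \cdots h_{n-1} t$ in which each $h_i$ is a minimal positive coset representative (guaranteed by Lemma~\ref{lem:left_cosets}(\ref{lem:left_cosets-b})); condition~(\ref{cm-3-new}) then follows by writing any $x\in\theta^{-1}(n)\cap\FF^{+*}$ in normal form and extracting its minimal stem, and condition~(\ref{cm-4-new}) is a direct consequence of axiom~(\ref{cm-1-new}) applied to distinct stems.

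With $\theta$ controlled, nuclearity of $\Cst(\ker\theta,\ker\theta\cap\FF^{+*})$ follows in a few steps. First, $\ker\theta\cap\FF^{+*}=\FF^+$, because a positive word in $S\cup\{t\}$ of height~$0$ has no occurrences of~$t$. Second, for $\alpha,\beta\in\FF^+$ with $\alpha\vee\beta$ finite in~$\FF^{+*}$, axiom~(\ref{cm-1-new}) gives $\theta(\alpha\vee\beta)=0$, so the join lies in $\FF\cap\FF^{+*}=\FF^+$ and coincides with the join computed inside~$(\FF,\FF^+)$. Consequently, the restriction of the universal Nica-covariant representation of $(\FF^*,\FF^{+*})$ to $\FF^+$ is Nica-covariant for $(\FF,\FF^+)$, and hence induces a surjection $\Cst(\FF,\FF^+)\twoheadrightarrow B_e$. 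Since $\Cst(\FF,\FF^+)$ is nuclear by Example~\ref{EE:F}, so is $B_e$; Lemma~\ref{lem-subalgebra}(\ref{lem-subalgebra-b}) then yields the desired nuclearity, completing the application of Theorem~\ref{T:nuclear}.

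The main obstacle is the normal-form bookkeeping required in axiom~(\ref{cm-1-new}): understanding precisely how the stems of two common upper bounds interact in an HNN extension. This parallels the analysis in~\cite[Theorem~5.1]{aHRT} for HNN extensions of quasi-lattice ordered groups, and the specific structure of the coset representatives provided by Lemma~\ref{lem:left_cosets} should make the adaptation to our free-group setting relatively clean.
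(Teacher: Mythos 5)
Your proposal is correct and follows essentially the same route as the paper: verify Definition~\ref{def-controlled} for the height map via the HNN normal form and Lemma~\ref{lem:left_cosets} (the paper simply cites \cite[Theorem~5.1 and Lemma~5.5]{aHRT} for this step, taking $\Sigma_k$ to be the elements equal to their stem), identify $\ker\theta\cap\FF^{+*}=\FF^+$, and conclude via Theorem~\ref{T:nuclear} using nuclearity of $\Cst(\FF,\FF^+)$. One small correction to your sketch of axiom~(\ref{cm-1-new}): when $\theta(x)<\theta(y)$ the least upper bound has the form $\mathrm{stem}(y)r$ with $r\in\FF^+$, so it is the stem of the element of \emph{greater} height that gets extended by a positive word, not ``the shorter of $x,y$''; the conclusion $\theta(x\vee y)=\max\{\theta(x),\theta(y)\}$ is of course unaffected.
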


\begin{proof}  That the height map is controlled is already established in~\cite[Theorem~5.1]{aHRT} and so we will only indicate here how the proof goes.

 Let $x,y\in \FF^{+*}$ be such that~$x\vee y<\infty$. By \cite[Lemma~5.5]{aHRT}, if~$\theta(x)<\theta(y)$, there is~$r\in\FF^+$ with~$x\vee y= \mathrm{stem}(y)r$, and so~$\theta(x\vee y)=\theta(y)=\max\{\theta(x),\theta(y)\}$. If~$\theta(x)=\theta(y)$, then~$\mathrm{stem}(x)=\mathrm{stem}(y)$ and, in this case, $$x\vee y=\mathrm{stem}(y)(p\vee_{\FF^+}q),$$ where~$x=\mathrm{stem}(x)p$ and~$y=\mathrm{stem}(y)q$. Hence~$\theta(x\vee y)=\theta(x)\vee \theta(y)$. Given~$k\in \NN$, the set of minimal elements~$\Sigma_k\subseteq \FF^{+*}$ is then given by~$$\{x\in\theta^{-1}(k)\cap \FF^{+*}: x=\mathrm{stem}(x)\}.$$ It is then easy to see that~$\theta$ is controlled. Since~$\ker\theta\cap\FF^{+*}=\FF^+$ and~$\Cst(\FF, \FF^+)$ is nuclear (see Example \ref{EE:F}), Theorem~\ref{T:nuclear} implies that~$\Cst(\FF^{*},\FF^{+*})$ is nuclear. This completes the proof of the statement.
\end{proof}

\begin{rmk} Although here we are only considering HNN extensions of free groups, a version of \cite[Theorem~5.1]{aHRT} in which the hypothesis ``$(G,P)$ is amenable" is replaced by ``$\Cst(G, P)$ is nuclear" also follows using that the height map is controlled.
\end{rmk}

Rewriting Proposition~\ref{prop:nuclearity_hnn} in terms of one-relator groups with positive presentation gives Corollary~\ref{cor:one-relator-v}:

\begin{proof}[Proof of Corollary \ref{cor:one-relator-v}] Let~$(\FF^*,\FF^{+*})$ be the quasi-lattice ordered group constructed as in Proposition~\ref{prop:hnn_extension}. We obtain an isomorphism $\FF^*\cong G$ by identifying generators $t\mapsto \sigma$, $s\mapsto s$. In particular, this restricts to an isomorphism $\FF^{+*}\cong P$. So $(G,P)$ is quasi-lattice ordered. That $\Cst(G,P)$ is nuclear then follows from Proposition~\ref{prop:nuclearity_hnn}.
\end{proof}

\subsubsection{Case $u\sigma w=\sigma$} We consider the HNN extension of~$\FF$ relative to~$A$, $B$ and the group isomorphism~$\phi^{-}\colon A\to B$ which sends~$u$ to~$w^{-1}$. In this case, we cannot apply \cite[Theorem~4.1]{aHRT} because~$\phi^-(A\cap\FF^+)\not\subseteq B\cap\FF^+$. However, we can directly prove that~$(\FF^*,\FF^{+*})$ is weakly quasi-lattice ordered. We will need the following technical lemma.

\begin{lem}\label{lem:normalform_picture} Let $\FF$ be the free group on~$S$. Let~$A$ and $B$ be the subgroups of~$\FF$ generated by the nonempty words~$u$ and~$w$, respectively, and let $\phi^-\colon A\to B$ be the isomorphism that sends~$u$ to~$w^{-1}$. Let~$\FF^*$ be the corresponding HNN extension and let~$h^*\in\FF^*$ with normal form $$h^*=h_0t^{\epsilon_1}h_1t^{\epsilon_2}\ldots h_{k-1}t^{\epsilon_k}h_k$$ relative to the complete set of representatives~$X$ and $Y$ for~$\FF/A$ and $\FF/B$ as in Lemma~\textup{\ref{lem:left_cosets}}. Then~$h^*\in\FF^{+*}$ if and only if
\begin{enumerate}

\item\label{lem:normalform_picture-a}  $\epsilon_i=1$ for all~$i$, $h_0\in \FF^+$ and~$h_k=w^mq$ for some~$m\in\ZZ$ and~$q\in \FF^+$;

\item\label{lem:normalform_picture-b}  for $2\leq i\leq k$, the double coset $$Bh_{i-1}A=\{w^mh_{i-1}u^n: m,n\in \ZZ\}$$ has a representative in~$\FF^+$.

\end{enumerate}

\begin{proof} Suppose that~$h^*\in\FF^{+*}$. Then there exist~$l\geq0$ and~$p_0,p_1,\ldots, p_l\in\FF^+$ such that~$$h^*=p_0tp_1t\cdots p_{l-1}tp_l.$$ We can compute the normal form of~$h^*$ by working from left to right using its representation~$p_0tp_1t\cdots p_{l-1}tp_l$ and the relation~$ut=tw^{-1}$. No factor~$t^{-1}$ appears in the normal form of~$p_0tp_1t\cdots p_{l-1}tp_l$, from where we deduce that~$l=k$ and~$\epsilon_i=1$ for all~$1\leq i\leq k$.

The representative of the left coset~$p_0A$ is~$h_0\in X$. Since~$p_0\in h_0A\cap \FF^+$, it follows from item (\ref{lem:left_cosets-b}) of Lemma~\ref{lem:left_cosets} that $h_0\in\FF^+$ and $h_0\leq_{\FF^+}p_0$. We write~$p_0=h_0u^{m_0}$, where~$m_0\geq 0$. Next, $h_1\in X$ is the representative of the left coset~$w^{-m_0}p_1$. So there exists~$n_1\in\ZZ$ such that $h_1=w^{-m_0}p_1u^{n_1}$. Then $$w^{m_0}h_1u^{-n_1}=p_1\in Bh_1A\cap\FF^+.$$ Thus the double coset~$Bh_1A$ has representative~$p_1$ in~$\FF^+$. Similarly, we conclude that the double coset~$Bh_{i-1}A$ has a representative in~$\FF^+$ for all~$2\leq i\leq k$. That $h_k$ has the form~$w^{m_k}p_k$ for some~$m_k\in\ZZ$ follows because $u^{-m_k}tp_k=tw^{m_k}p_k$.

Conversely, let $h^*=h_0t^{\epsilon_1}h_1t^{\epsilon_2}\cdots t^{\epsilon_{k-1}}h_{k-1}t^{\epsilon_k}h_k$ and suppose that conditions (\ref{lem:normalform_picture-a}) and (\ref{lem:normalform_picture-b}) in the statement hold. Let~$\theta\colon\FF^*\to\ZZ$ be the height map. Our proof will be by induction on~$\theta(h^*)=k$.

 If~$k=0$, then~$h^*=h_0\in\FF^+\subseteq \FF^{+*}$. If~$k=1$, let~$p_1\in \FF^+$ and~$m_1\in\ZZ$ be such that~$h_1=w^{m_1}p_1$. If~$m_1\geq 0$, we clearly have $$h^*=h_0th_1=h_0tw^{m_1}p_1\in\FF^{+*}.$$ In case~$m_1<0$, the relation~$ut=tw^{-1}$ tells us that~$h^*\in\FF^{+*}$.

Fix~$k\geq 2$ and suppose that~${h'}^{*}\in\FF^{+*}$ whenever its normal form satisfies  (\ref{lem:normalform_picture-a}) and (\ref{lem:normalform_picture-b}), and~$\theta({h'}^{*})\leq k-1$. Let~$m_k\in\ZZ$ and~$p_k\in\FF^+$ be such that~$h_k=w^{m_k}p_k$. Using that the double coset~$Bh_{k-1}A$ has a representative $p_{k-1}\in\FF^+$, we can find~$m_{k-1}, n_k\in\ZZ$ with~$$p_{k-1}=w^{m_{k-1}}h_{k-1}u^{n_{k-1}}.$$ By the induction hypothesis, we conclude that $$h_0th_1t\cdots tw^{-m_{k-1}}p_{k-1}$$ lies in~$\FF^{+*}.$ We claim that~$u^{-n_{k-1}}tw^{m_{k}}$ belongs to~$\FF^{+*}$ as well. Indeed, if~$-n_{k-1},m_k\geq 0$ we are done. Otherwise, we use the relations~$u^{-1}t=tw$ and $ut=tw^{-1}$ to write~$u^{-n_{k-1}}tw^{m_{k}}$ as a product of~$t$ and a nonnegative power of~$u$ or~$w$. Thus one has $u^{-n_{k-1}}tw^{m_{k}}\in\FF^{+*}$ and so $$h^*=h_0th_1t\cdots th_{k-1}th_k=h_0th_1t\cdots tw^{-m_{k-1}}p_{k-1}u^{-n_{k-1}}tw^{m_{k}}p_k$$ lies in~$\FF^{+*}$, as desired.
\end{proof}
\end{lem}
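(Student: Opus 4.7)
My plan is to prove the two implications separately: the forward direction by a direct computation that transforms an arbitrary positive word expression for $h^*$ into the normal form, and the backward direction by induction on the height $k = \theta(h^*)$. The only extra tool needed beyond the structural facts in Lemma~\ref{lem:left_cosets} is the HNN relation in the form $u^a t = t w^{-a}$ for all $a \in \ZZ$, which follows by iterating $ut = tw^{-1}$.

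For the forward direction, write $h^* = p_0 t p_1 t \cdots t p_l$ with $p_i \in \FF^+$; since no $t^{-1}$ can appear or cancel in such a word, computing the normal form forces $l = k$ and $\epsilon_i = 1$ for all $i$. The left coset $p_0 A$ has representative $h_0 \in X$, and by Lemma~\ref{lem:left_cosets}(b), $h_0 \in \FF^+$ with $p_0 = h_0 u^{m_0}$ for some $m_0 \geq 0$. Pushing $u^{m_0}$ across the next~$t$ gives $p_0 t p_1 = h_0 t(w^{-m_0} p_1)$; continuing, at step $i$ the representative of $(w^{-m_{i-1}} p_i)A$ in $X$ is some $h_i$, whence $p_i = w^{m_{i-1}} h_i u^{-n_i}$. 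This exhibits $p_i$ as a positive representative of $B h_i A$, which is exactly condition~(b). The accumulated $w$-power on the left of the final factor produces $h_k = w^{m_k} p_k$ with $p_k \in \FF^+$, completing~(a).

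For the backward direction I induct on $k$. The case $k = 0$ is trivial, and $k = 1$ reduces to $h_0 t h_1 = h_0 t w^{m_1} p_1$ with $p_1 \in \FF^+$: if $m_1 \geq 0$ this is already positive, and if $m_1 < 0$ the identity $t w^{m_1} = u^{-m_1} t$ rewrites it as a positive word. For $k \geq 2$, use condition~(b) to write $h_{k-1} = w^{-m_{k-1}} p_{k-1} u^{-n_{k-1}}$ with $p_{k-1} \in \FF^+$, and condition~(a) to write $h_k = w^{m_k} p_k$. Then split
\[
h^* = \bigl( h_0 t h_1 \cdots t h_{k-2} t (w^{-m_{k-1}} p_{k-1}) \bigr) \cdot \bigl( u^{-n_{k-1}} t w^{m_k} p_k \bigr).
\]
The first factor still has all entries in $X$ at positions preceding a~$t$, has height $k-1$, and its rightmost (unrestricted) entry $w^{-m_{k-1}} p_{k-1}$ is of the required form $w^m q$ with $q \in \FF^+$, so (a) and (b) are inherited and the inductive hypothesis places it in $\FF^{+*}$. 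For the second factor, $u^{-n_{k-1}} t w^{m_k} = t w^{n_{k-1} + m_k}$, which becomes a positive word after a further application of $t w^{-1} = u t$ when the exponent is negative; multiplying by $p_k \in \FF^+$ keeps the product in $\FF^{+*}$.

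The main subtle point will be, in the forward direction, tracking how the coset representatives $h_i \in X$ emerge sequentially from the naive positive word and ensuring that the witnesses $p_i$ produced at each step indeed realise $Bh_iA$ in $\FF^+$; this is exactly the content of Lemma~\ref{lem:left_cosets}(b). Once that bookkeeping is in place, both directions go through smoothly.
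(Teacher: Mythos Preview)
Your proposal is correct and follows essentially the same approach as the paper: a direct left-to-right computation of the normal form for the forward direction, and induction on the height $k$ for the converse, splitting off the last $t$-block and applying the relation $u^a t = t w^{-a}$ to the tail. The only cosmetic difference is that the paper handles the tail $u^{-n_{k-1}} t w^{m_k}$ by case analysis on the signs of $-n_{k-1}$ and $m_k$ separately, whereas you first combine them into $t w^{n_{k-1}+m_k}$ and then treat the sign of the single exponent; both are equivalent.
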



Next we will prove that $(\FF^*, \FF^{+*})$ is weakly quasi-lattice ordered.

\begin{prop}\label{prop:height_map} Let~$\FF^*$ be the HNN extension of~$\FF$ relative to~$A$, $B$ and~$\phi^-$ as in Lemma~\textup{\ref{lem:normalform_picture}}. Then~$(\FF^*,\FF^{+*})$ is weakly quasi-lattice ordered. Let $\theta\colon\FF^*\to \ZZ$ be the height map. If~$x,y\in\FF^{+*}$ and~$x\vee y<\infty$, then~$x$ and~$y$ are comparable and $$\theta(x\vee y)=\max\{\theta(x),\theta(y)\}.$$ 
\end{prop}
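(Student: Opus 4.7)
The plan is to prove the stronger assertion that any two elements $x,y\in \FF^{+*}$ admitting a common upper bound in $\FF^{+*}$ are in fact comparable. Once comparability is shown, both claims of the proposition follow at once: (QL2) holds with $x\vee y=\max\{x,y\}$, and the identity $\theta(x\vee y)=\max\{\theta(x),\theta(y)\}$ is immediate from $\theta$ being order-preserving on $\FF^{+*}$.

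To prove comparability, I would write
\[
x=h_0th_1t\cdots th_k,\qquad y=h'_0th'_1t\cdots th'_l
\]
in the normal form described by Lemma~\ref{lem:normalform_picture}, and assume without loss of generality that $k\leq l$. Given a common upper bound $z\in \FF^{+*}$ with $z=xp=yq$ and $p,q\in \FF^{+*}$, I would bring $z$ into normal form $z=z_0tz_1t\cdots tz_N$ and analyse how this normal form arises from the two products $xp$ and $yq$. The argument then proceeds by induction on $k=\theta(x)$.

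For the base case $k=0$, the element $x=h_0$ lies in $\FF^+$. Writing the initial positive segment of $p$ as $p_0\in \FF^+$ (before the first $t$ of $p$) and using the coset decomposition from Lemma~\ref{lem:left_cosets}, the product $xp_0\in \FF^+$ rewrites as $z_0u^m$ with $z_0\in X\cap \FF^+$ and $m\geq 0$, which identifies $z_0$. Carrying out the same reduction for $z=yq$ identifies $z_0$ again, this time in terms of $y$'s leading data. Comparing the two decompositions inside the free semigroup $\FF^+$ shows that $x$ and the leading positive part of $y$ are comparable in $\FF^+$ up to trailing powers of $u$; the relation $utw=t$ can then convert any surplus trailing $u$-power into an additional $t$-syllable, yielding $x\leq y$ in $\FF^{+*}$. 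For the inductive step $k\geq 1$, the block $h_0\in \FF^+$ plays the role of $x$ from the base case, forcing $h_0=h'_0$ modulo an absorbable power of $u$; after cancelling the initial factor $h_0t$, the pair $t^{-1}h_0^{-1}x$ and $t^{-1}h_0^{-1}y$ still admits a common upper bound in $\FF^{+*}$ and has strictly smaller height, so the induction hypothesis applies.

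The main obstacle I anticipate is the flexibility introduced by the relation $utw=t$, which already implies $u^n\leq t$ in $\FF^{+*}$ for every $n\geq 0$ and is responsible for the infinite descending chains exhibited in Example~\ref{ex-BS-with-chains}. Consequently, matches between initial normal-form blocks of $x$, $y$, and $z$ are only exact modulo trailing powers of $u$ and leading powers of $w$, and careful bookkeeping through the double-coset condition of Lemma~\ref{lem:normalform_picture}(b) is needed to verify that every such discrepancy can be absorbed using $utw=t$ without destroying the comparability one is trying to establish. Concretely, the step that will take the most work is showing that whenever $z_0u^m=xp_0$ and $z_0u^{m'}=h'_0q_0$ both hold, the resulting relationship between $x$ and $h'_0$ always produces $x\leq y$ (rather than the opposite inequality, which would need to be ruled out by examining the higher blocks).
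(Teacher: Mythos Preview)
Your overall strategy---reducing both claims to the single assertion that any two elements of $\FF^{+*}$ with a common upper bound are comparable---is exactly what the paper establishes. The difficulty lies in your proposed induction.

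The inductive step has a genuine gap. You want to cancel $h_0t$ from $x$, $y$ and $z$ and apply the inductive hypothesis to the shorter elements. But for $t^{-1}h_0^{-1}x=h_1th_2t\cdots th_k$ to lie in $\FF^{+*}$, Lemma~\ref{lem:normalform_picture}(\ref{lem:normalform_picture-a}) requires $h_1\in\FF^+$; yet the hypothesis $x\in\FF^{+*}$ only gives that the double coset $Bh_1A$ contains a positive representative, not that $h_1$ itself is positive. The same objection applies to $t^{-1}h_0^{-1}y$ and $t^{-1}h_0^{-1}z$. So the triple you feed back into the induction need not live in $\FF^{+*}$, and the hypothesis does not apply. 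One can try to repair this by cancelling $h_0tw^{-m}$ for a suitable $m$ instead of $h_0t$, but then the leading blocks of the two truncated elements acquire different $w$-shifts, and the ``$h_0=h'_0$'' matching you rely on no longer holds on the nose; the bookkeeping you flag as ``the main obstacle'' becomes the entire proof.

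The paper sidesteps this by abandoning induction in favour of a two-case split on whether $\theta(x)=\theta(y)$. When the heights agree, uniqueness of normal forms forces $\mathrm{stem}(x)=\mathrm{stem}(y)$ outright (no $u$- or $w$-ambiguity in the first $k$ blocks), reducing the question to comparability of two elements of $\FF^+$ after a single $w$-power adjustment in the last block. When $\theta(x)<\theta(y)$, the paper shows $x<y$ in one pass: it writes down $x^{-1}y$ explicitly and checks the conditions of Lemma~\ref{lem:normalform_picture} block by block, using only that double cosets are unchanged under left multiplication by powers of $w$. This avoids ever needing the truncated elements to lie in $\FF^{+*}$.
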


\begin{proof} Let~$x,y\in \FF^{+*}$ and suppose that there is a common upper bound in~$\FF^{+*}$ for~$x$ and~$y$.

\textbf{Case 1.} Suppose that $\theta(x)=\theta(y)=:k$. Let~$X$ and $Y$ be the usual sets of left coset representatives for~$\FF/A$ and $\FF/B$ obtained from Lemma~\ref{lem:left_cosets}, respectively. By Lemma~\ref{lem:normalform_picture}, we can write~$x=\mathrm{stem}(x)w^{m}p$ and~$y=\mathrm{stem}(y)w^{n}q$, where~$m,n\in\ZZ$ and~$p,q\in \FF^+$. Without loss of generality, assume $n\geq m$. Since~$x$ and~$y$ have an upper bound in~$\FF^{+*}$, there are~$r,s\in\FF^{+*}$ with~$xr=ys$. When putting $xr$ and $ys$ into normal form using the procedure as in Lemma \ref{lem:normalform_picture}, we work from left to right, and hence do not change $\mathrm{stem}(x)$ or $\mathrm{stem}(y)$. Thus  $\mathrm{stem}(x)=\mathrm{stem}(y)$ by uniqueness of the normal form. 
Applying Lemma~\ref{lem:normalform_picture}~(\ref{lem:normalform_picture-b}) to the normal forms of $xr$ and $ys,$   we also deduce that there are~$r_0, s_0\in\FF^+$ with $$w^{m}p r_0A=w^{n}qs_0A.$$ In particular, $p$ and~$w^{n-m}q$ have an upper bound in~$\FF^+$. This implies that~$p$ and $w^{n-m}q$ are comparable in~$\FF^+$. If $w^{n-m}q\leq_{\FF^+}p$, we have~$$y=\mathrm{stem}(y)w^{n}q=\mathrm{stem}(x)w^{m}w^{n-m}q\leq x.$$ If~$p\leq w^{n-m}q$, then~$$x=\mathrm{stem}(x)w^{m}p=\mathrm{stem}(y)w^{m}p\leq y.$$ So~$x$ and~$y$ are comparable and hence~$x\vee y=\mathrm{max}\{x,y\}$. Thus~$\theta(x\vee y)=\theta(x)=\theta(y)$.

\textbf{Case 2.} Suppose that~$\theta(x)\ne \theta(y)$. Without loss of generality,  we assume that $\theta(x)<\theta(y)$. We shall prove that~$x<y$. Let $$y=h_0t\cdots th_{\theta(x)}t\cdots th_{\theta(y)}$$ be the normal form of~$y$ relative to the complete sets of left coset representatives~$X$ and~$Y$. Since there exists an upper bound in~$\FF^{+*}$ for~$x$ and~$y$, it follows that 
$$
y=\mathrm{stem}(x)  h_{\theta(x)}t\dots th_{\theta(y)}.
$$ 
 Let~$m\in\ZZ$ and~$p\in \FF^+$ be such that~$x=\mathrm{stem}(x)w^mp$. Since there exist~$r,s\in\FF^{+*}$ such that~$xr=ys,$ we can find~$r_0\in \FF^+\cap X$ with~$w^mpr_0A=h_{\theta(x)}A$.  Then there is~$n\in \ZZ$ with~$w^mpr_0=h_{\theta(x)}u^n$. Thus 
\[
 x^{-1}y
 =r_0u^{-n}h_{\theta(x)}^{-1}\mathrm{stem}(x)^{-1}y
 =r_0tw^nh_{\theta(x)+1}t\cdots th_{\theta(y)}.
\]
 If $\theta(y)=\theta(x)+1$, we have~$x^{-1}y=r_0tw^nh_{\theta(x)+1}\in\FF^{+*}$ by Lemma~\ref{lem:normalform_picture}, because $y\in\FF^{+*}$. Suppose $\theta(x)+1<\theta(y)$. Observe that the double coset $Bw^nh_{\theta(x)+1}A$ coincides with $Bh_{\theta(x)+1}A$. On the other hand, $Bw^nh_{\theta(x)+1}A$ equals the double coset of the representative of the left coset~$w^nh_{\theta(x)+1}A$. It follows that this latter double coset has a representative in~$\FF^+$ because $Bh_{\theta(x)+1}A$ does so. Proceeding with this argument for all $\theta(x)+1\leq i <\theta(y)$ and using Lemma~\ref{lem:normalform_picture} again, we deduce that $x^{-1}y\in\FF^{+*}$. So $x<y$. Thus $y=x\vee y$ and $\theta(x\vee y)=\theta(y)=\max\{\theta(x),\theta(y)\}$.
\end{proof}

\begin{prop} 
Let~$(\FF^*,\FF^{+*})$ be as in Proposition~\textup{\ref{prop:height_map}}. Then the height map $\theta\colon \FF^*\to\ZZ$ is controlled and, in particular, $\Cst(\FF^{*},\FF^{+*})$ is nuclear. 
\end{prop}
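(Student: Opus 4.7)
The plan is to verify that the height map $\theta\colon \FF^*\to\ZZ$ is a controlled map in the sense of Definition~\ref{defn-less-controlled}, then invoke Theorem~\ref{T:nuclear}. That $\theta$ is order-preserving is immediate since $\theta(\FF^{+*})\subseteq \NN$, and condition~(\ref{defn-less-controlled-a}) follows directly from Proposition~\ref{prop:height_map}, which yields $\theta(x\vee y)=\max\{\theta(x),\theta(y)\}$ whenever $x\vee y<\infty$ in $\FF^{+*}$.

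For condition~(\ref{defn-less-controlled-b}), I would partition $\theta^{-1}(k)\cap\FF^{+*}$ according to the stem of the normal form guaranteed by Lemma~\ref{lem:normalform_picture}. When $k=0$, take $\Lambda_0=\{e\}$, $S_e=\FF^+$ and $s_n^e=e$. When $k\geq 1$, let $\Lambda_k$ be the set of stems $\lambda=h_0th_1t\cdots th_{k-1}t$ arising from elements of $\theta^{-1}(k)\cap\FF^{+*}$, and set
\[
S_\lambda\coloneqq\{x\in\theta^{-1}(k)\cap\FF^{+*}:\mathrm{stem}(x)=\lambda\}.
\]
Uniqueness of the normal form ensures the $S_\lambda$ are pairwise disjoint and exhaust $\theta^{-1}(k)\cap\FF^{+*}$, giving~(\ref{defn-less-controlled-b1}).

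The crucial step is the construction of the decreasing cofinal sequence. For $\lambda\in\Lambda_k$ with $k\geq 1$, I would set $s_n^\lambda\coloneqq \lambda w^{-n}$ for $n\in\NN$. By Lemma~\ref{lem:normalform_picture}, the terminal block $w^{-n}$ is of the permitted form $w^mp$ with $m=-n$ and $p=e$, so $s_n^\lambda\in\FF^{+*}$ with stem $\lambda$, hence $s_n^\lambda\in S_\lambda$. A direct calculation gives $(s_n^\lambda)^{-1}s_m^\lambda=w^{n-m}\in\FF^+$ whenever $m\leq n$, so the sequence is decreasing. For~(\ref{defn-less-controlled-b2}), each $x\in S_\lambda$ has the form $x=\lambda w^mp$ with $m\in\ZZ$ and $p\in\FF^+$ by Lemma~\ref{lem:normalform_picture}; choosing $n\geq -m$ yields $(s_n^\lambda)^{-1}x=w^{n+m}p\in\FF^+$, whence $x\geq s_n^\lambda$.

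Having verified that $\theta$ is controlled, nuclearity of $\Cst(\FF^*,\FF^{+*})$ follows from Theorem~\ref{T:nuclear}, since $\ZZ$ is amenable and $\Cst(\ker\theta,\ker\theta\cap\FF^{+*})$ is nuclear (because $\ker\theta\cap\FF^{+*}=\FF^+$ by Lemma~\ref{lem:normalform_picture}, and $\Cst(\FF,\FF^+)$ is nuclear by Example~\ref{EE:F}). The main obstacle is the construction of the decreasing sequence: unlike the case treated in Proposition~\ref{prop:nuclearity_hnn}, here $S_\lambda$ admits an infinite descending chain $\{\lambda w^{-n}\}_{n\in\NN}$ with no minimum, and only the extra flexibility of Definition~\ref{defn-less-controlled} over the older notion of controlled map allows us to handle this situation by absorbing the chain into the cofinal sequence $s_n^\lambda=\lambda w^{-n}$.
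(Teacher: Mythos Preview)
Your proof is correct and follows essentially the same route as the paper: partition $\theta^{-1}(k)\cap\FF^{+*}$ by stem, take the decreasing cofinal sequence $s_n^\lambda=\lambda w^{-n}$, and conclude via Theorem~\ref{T:nuclear} using that $\ker\theta\cap\FF^{+*}=\FF^+$. The only cosmetic difference is that the paper indexes $\Lambda_k$ explicitly as $(X\cap\FF^+)\times({}_wX)^{k-1}$ rather than as the set of stems, and chooses $n=|m|$ rather than any $n\geq -m$; both choices are equivalent.
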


\begin{proof} By Proposition~\ref{prop:height_map}, we have~$\theta(x\vee y)=\theta(x)\vee\theta(y)$ whenever~$x,y\in\FF^{+*}$ have an upper bound in~$\FF^{+*}$. We will show that~$\theta$ also satisfies item (b) of Definition~\ref{defn-less-controlled}. 

Let~$q\in \NN$. If~$q=0$, we take~$\Lambda_0=\{e\}$. So~$$S_e\coloneqq\theta^{-1}(0)\cap \FF^{+*}=\{x\in\theta^{-1}(0)\cap\FF^{+*}: x\geq e\}=\FF^+$$ and the required sequence $(s_n^e)_{n\in\NN}\subseteq S_e$ is just the constant sequence $s_n^e=e$ for all~$n\in\NN$.

Assume~$q\geq 1$. Set~$${}_w\!X\coloneqq\{h\in X: BhA=BpA~\text{for some } p\in\FF^+\}.$$ Put~$\Lambda_q=(X\cap \FF^+)\times ({}_w\! X)^{q-1}$ (in case~$q=1$, we naturally identify $({}_w\! X) ^{q-1}$ with the unit set). For~$\lambda=(h_0, h_1, \ldots, h_{q-1})\in \Lambda_q$, we let $$S_{\lambda}\coloneqq\{x\in\theta^{-1}(q)\cap\FF^{+*}: \mathrm{stem}(x)=h_0th_1t\cdots th_{q-1}t\}.$$ Then $$\theta^{-1}(q)\cap\FF^{+*}=\bigsqcup_{\substack{\lambda\in\Lambda_q}}S_\lambda.$$ The required decreasing sequence associated to~$\lambda=(h_0, h_1, \ldots, h_{q-1})$ is the sequence whose $n$-th term is $$s_n^{\lambda}\coloneqq h_0t\cdots th_{q-1}tw^{-n}.$$ We claim that $$S_{\lambda}=\bigcup_{n\in\NN}\{x\in\theta^{-1}(q)\cap \FF^{+*}:x\geq s_n^{\lambda}\}.$$ Indeed, given $x\in S_{\lambda}$, there are~$m\in\ZZ$ and~$p\in\FF^+$ with $x=\mathrm{stem}(x)w^mp$ by Lemma~\ref{lem:normalform_picture}. Take~$n=|m|$. Then $$(s_n^\lambda)^{-1}x=w^{|m|}w^mp\in\FF^+\subseteq\FF^{+*}.$$ This proves our claim. We conclude that~$\theta$ is controlled in the sense of Definition~\ref{defn-less-controlled}. 

Because~$\ker\theta\cap\FF^{+*}=\FF^+$ and~$\Cst(\FF, \FF^+)$ is nuclear, we deduce from Theorem~\ref{T:nuclear} that~$\Cst(\FF^{*},\FF^{+*})$ is also nuclear.
\end{proof}

\begin{rmk} Observe that the height map $\theta\colon \FF^*\to\ZZ$ above is not controlled in the sense of Definition~\ref{def-controlled}. Indeed, the infinite descending sequence $$\cdots\leq t w^{-(n+1)}\leq tw^{-n}\leq \cdots$$ is not bounded below by any element~$x\in\theta^{-1}(1)\cap\FF^{+*}$.
\end{rmk}
 The proof of Corollary \ref{cor:non-Noetherian} now follows as in Corollary \ref{cor:one-relator-v}.

\subsection{Semidirect products of weakly quasi-lattice ordered groups}

In this subsection, we analyse the semidirect product of weak quasi-lattice orders. We provide examples in which the semidirect product is again weakly quasi-lattice ordered with a natural positive cone, and study nuclearity of the corresponding semigroup $\Cst$\nb-algebra using controlled maps. Most of our examples involve free groups.

 Let $(N, L)$ and $(H,Q)$ be weak quasi-lattice orders. Suppose that $\varphi\colon H \leq \mathrm{Aut}(N)$ is a subgroup of automorphisms of~$N$, so that we can form the semidirect product
$$G = N \rtimes_\varphi H,$$ where $\varphi\colon H\to\mathrm{Aut}(N),$ $h\mapsto\varphi_h$ is the corresponding action of~$H$ on~$N$. Recall how the
semidirect product is defined: the underlying set is the Cartesian product $N\times H$ of~$N$ and~$H$, and multiplication is given by
 $$(n,h)(n',h')\coloneqq (n\varphi_h(n'), hh').$$

Our first goal is to find sufficient conditions for the semidirect product $G=N\rtimes_{\varphi}H$ to be weakly quasi-lattice ordered as well, with positive cone $P=L\rtimes_\varphi Q$.
 
\begin{lem} Let $(N, L)$ and $(H,Q)$ be as above and suppose that for all $q \in Q$
$$\varphi_q(L) \subseteq L.$$
Then 
$$P = L \rtimes_\varphi Q$$
is a generating subsemigroup of~$G$ such that $P\cap P^{-1} = \{e\}.$
\end{lem}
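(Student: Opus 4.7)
The plan is straightforward: verify the three claims in turn, namely (i) that $P=L\rtimes_\varphi Q$ is closed under the semidirect product multiplication, (ii) that $P\cap P^{-1}=\{e\}$, and (iii) that $P$ generates $G$ as a group. None of the steps should be deep; the only place where the new hypothesis $\varphi_q(L)\subseteq L$ plays an essential role is in closure under multiplication.

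For closure, I would just compute: given $(n,q),(n',q')\in P$, the product is $(n\varphi_q(n'),qq')$. Since $n\in L$, $n'\in L$ and $\varphi_q(L)\subseteq L$ by assumption, we get $\varphi_q(n')\in L$, and then $n\varphi_q(n')\in L$ because $L$ itself is a subsemigroup; also $qq'\in Q$. Note this is exactly where the invariance hypothesis on $L$ is needed, and it is also the one step that would fail if $\varphi_q(L)$ were only in $N$ rather than in $L$.

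For $P\cap P^{-1}=\{e\}$, I would use the standard formula $(n,q)^{-1}=(\varphi_{q^{-1}}(n^{-1}),q^{-1})$. If $(n,q)\in P\cap P^{-1}$, reading off the $H$\nobreakdash-coordinate gives $q,q^{-1}\in Q$, hence $q\in Q\cap Q^{-1}=\{e\}$ since $(H,Q)$ is a weak quasi-lattice; once $q=e$ the $N$\nobreakdash-coordinate condition reduces to $n,n^{-1}\in L$, whence $n=e$ by the analogous property for $(N,L)$. (Note we do not need the invariance hypothesis here, only the defining property of weak quasi-lattice orders.)

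For generation, I would use the factorisation $(n,h)=(n,e)(e,h)$. Because $L$ generates $N$ and $Q$ generates $H$ (as part of the standing setup for weak quasi-lattices), any $n\in N$ is a word in $L\cup L^{-1}$ and any $h\in H$ is a word in $Q\cup Q^{-1}$, so $(n,e)$ and $(e,h)$ lie in the subgroup of $G$ generated by $\{(l,e):l\in L\}\cup\{(e,q):q\in Q\}\subseteq P$. I do not expect a real obstacle anywhere; the lemma is a bookkeeping check designed to isolate the invariance hypothesis on $L$ that will be used later to identify the weak quasi-lattice structure on the semidirect product.
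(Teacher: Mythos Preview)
Your proposal is correct and follows essentially the same approach as the paper. The paper's proof is terser: it simply asserts that generation ``follows directly from the construction'' and gives the same coordinate-by-coordinate argument for $P\cap P^{-1}=\{e\}$; your version is more explicit, in particular supplying the verification of closure under multiplication (where the hypothesis $\varphi_q(L)\subseteq L$ is used), which the paper leaves implicit.
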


In this case we have a partial order on $G$ given by $g_1\leq_P g_2$ if and only if $g_1^{-1}g_2\in P$, and we write:
$$(G,P) = (N,L)\rtimes_\varphi (H,Q).$$
\begin{proof} That $P$ generates $G$ follows directly from the construction. Now suppose $(l,q) \in P\cap P^{-1}.$ Since $(l,q)^{-1}=(\varphi_{q^{-1}}(l^{-1}), q^{-1})$ and $Q\cap Q^{-1}=\{e_q\}$, we must have $q=e_Q$. Hence $\varphi_{q^{-1}}(l^{-1})=l^{-1}\in L\cap L^{-1}$ and so $l=e_L$.
\end{proof}

We will invoke the following fact several times in this subsection.

\begin{lem}\label{lem-leq1}
Let $(N,L)$ and $(H,Q)$ be weak quasi-lattices with $H \leq \mathrm{Aut}(N)$ such that $\varphi_q(L)\subseteq L$ for all $q \in Q$, and let $(G,P) = (N,L)\rtimes_\varphi (H,Q)$ be the semidirect product. If $(l,q) \leq_P (m,r)$ with $l,m \in L$ and $q,r \in Q$, then $l\leq_L m$ and $q \leq_Q r$. 
\end{lem}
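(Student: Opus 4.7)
The plan is to unfold the definition of $\leq_P$ in coordinates and read off the two required inequalities from the two components of the product.

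First, I would write out what the hypothesis $(l,q)\leq_P(m,r)$ says, namely $(l,q)^{-1}(m,r)\in P$. Using the standard formula for inverses in a semidirect product, $(l,q)^{-1}=(\varphi_{q^{-1}}(l^{-1}),q^{-1})$, so multiplying out gives
\[
(l,q)^{-1}(m,r)=\bigl(\varphi_{q^{-1}}(l^{-1})\varphi_{q^{-1}}(m),\,q^{-1}r\bigr)=\bigl(\varphi_{q^{-1}}(l^{-1}m),\,q^{-1}r\bigr).
\]
Since $P=L\rtimes_\varphi Q$, membership in $P$ is precisely the conjunction of $\varphi_{q^{-1}}(l^{-1}m)\in L$ and $q^{-1}r\in Q$.

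The second coordinate immediately gives $q^{-1}r\in Q$, which is the definition of $q\leq_Q r$. For the first coordinate, the key step is to transport $\varphi_{q^{-1}}(l^{-1}m)\in L$ back to $l^{-1}m$ by applying $\varphi_q$. Here I would invoke the standing hypothesis $\varphi_q(L)\subseteq L$ for all $q\in Q$: since $\varphi_{q^{-1}}(l^{-1}m)$ is an element of $L$ and $q\in Q$, applying $\varphi_q$ lands us back in $L$, yielding $l^{-1}m=\varphi_q\bigl(\varphi_{q^{-1}}(l^{-1}m)\bigr)\in L$, i.e.\ $l\leq_L m$.

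The only subtle point, and the one to flag as the main obstacle, is precisely this last manoeuvre: the hypothesis only tells us $\varphi_q(L)\subseteq L$, not that $\varphi_q$ restricts to a bijection of $L$, so one cannot argue by injectivity of $\varphi_{q^{-1}}$ on $L$. Nevertheless, we do not need a bijection; we only need the forward containment applied to the specific element $\varphi_{q^{-1}}(l^{-1}m)\in L$, which is automatic. No properties of $(N,L)$ or $(H,Q)$ beyond being partial orders with positive cones are used, so the argument is essentially a direct computation in the semidirect product.
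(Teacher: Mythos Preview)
Your proof is correct and follows essentially the same approach as the paper: compute $(l,q)^{-1}(m,r)$ in coordinates, read off $q^{-1}r\in Q$ from the second component, and for the first component observe that $\varphi_{q^{-1}}(l^{-1}m)\in L$ implies $l^{-1}m\in\varphi_q(L)\subseteq L$. Your commentary on why only the forward containment $\varphi_q(L)\subseteq L$ is needed (and not bijectivity) is a nice clarification, but the argument itself matches the paper's line for line.
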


\begin{proof} Assume that $(l,q) \leq_P (m,r)$. This entails
$$(l,q)^{-1}(m,r) = (\varphi_{q^{-1}}(l^{-1})\varphi_{q^{-1}}(m), q^{-1}r) \in (L,Q).$$
Hence $q \leq_Q r.$ Also, $\varphi_{q^{-1}}(l^{-1})\varphi_{q^{-1}}(m)= \varphi_{q^{-1}}(l^{-1}m) \in L$, which implies that $l^{-1}m \in \varphi_q(L) \subseteq L.$ Hence $l \leq_L m.$
\end{proof}

\begin{lem}\label{lem-leq2}
Let $(N,L)$ and $(H,Q)$ be weak quasi-lattices with $H \leq \mathrm{Aut}(N)$. Suppose that $\varphi_q(L)=L$ for all $q \in Q.$ Then the semidirect product
$$(G,P) = (N,L) \rtimes_\varphi (H,Q)$$ is weakly quasi-lattice ordered. In this case, $$(l,q)\vee_P(m,r)=(l \vee_L m, q\vee_Q r).$$
\end{lem}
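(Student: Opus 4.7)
The plan is to verify the weak quasi-lattice condition (QL2) for $(G,P)$ by showing that whenever $(l,q)$ and $(m,r)$ in $P$ admit a common upper bound in~$P$, the element $(l\vee_L m, q\vee_Q r)$ is well-defined and serves as their least upper bound. The previous lemma (the one immediately preceding this statement) does most of the work, reducing everything to the component-wise weak quasi-lattice structure on $(N,L)$ and $(H,Q)$.

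First, I would assume $(l,q),(m,r)\in P$ have a common upper bound $(s,t)\in P$. Applying Lemma~\ref{lem-leq1} to both $(l,q)\leq_P(s,t)$ and $(m,r)\leq_P(s,t)$ yields $l,m\leq_L s$ and $q,r\leq_Q t$. Since $(N,L)$ and $(H,Q)$ are weak quasi-lattices, this shows $l\vee_L m$ and $q\vee_Q r$ exist, with $l\vee_L m\leq_L s$ and $q\vee_Q r\leq_Q t$. In particular $(l\vee_L m, q\vee_Q r)\in P$.

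Next I would verify that $(l\vee_L m, q\vee_Q r)$ is itself an upper bound of $(l,q)$ and $(m,r)$. Computing
\[
(l,q)^{-1}(l\vee_L m,\, q\vee_Q r)=\bigl(\varphi_{q^{-1}}(l^{-1}(l\vee_L m)),\; q^{-1}(q\vee_Q r)\bigr),
\]
the second coordinate lies in $Q$ because $q\leq_Q q\vee_Q r$, and the first coordinate lies in $L$ because $l^{-1}(l\vee_L m)\in L$ and the hypothesis $\varphi_q(L)=L$ for all $q\in Q$ implies that $\varphi_{q^{-1}}$, as the inverse bijection of $\varphi_q|_L$, also satisfies $\varphi_{q^{-1}}(L)=L$. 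Hence $(l,q)\leq_P(l\vee_L m, q\vee_Q r)$, and symmetrically for $(m,r)$.

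Finally I would check that this upper bound is least. Given any other upper bound $(s,t)\in P$, Lemma~\ref{lem-leq1} again gives $l,m\leq_L s$ and $q,r\leq_Q t$, hence $l\vee_L m\leq_L s$ and $q\vee_Q r\leq_Q t$. The analogous computation
\[
(l\vee_L m, q\vee_Q r)^{-1}(s,t)=\bigl(\varphi_{(q\vee_Q r)^{-1}}((l\vee_L m)^{-1}s),\; (q\vee_Q r)^{-1}t\bigr)
\]
lies in $P$ by the same reasoning, using $\varphi_{(q\vee_Q r)^{-1}}(L)=L$. This gives $(l\vee_L m, q\vee_Q r)\leq_P(s,t)$, confirming the formula $(l,q)\vee_P(m,r)=(l\vee_L m, q\vee_Q r)$. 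The only subtle point—and the step I would highlight carefully—is the use of $\varphi_q(L)=L$ (rather than merely $\varphi_q(L)\subseteq L$) to ensure that $\varphi_{q^{-1}}$ sends $L$ into $L$ even though $q^{-1}\notin Q$ in general; this is precisely what makes the stronger hypothesis of this lemma (compared to the previous one) necessary.
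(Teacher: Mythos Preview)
Your proof is correct and follows essentially the same approach as the paper's: both use Lemma~\ref{lem-leq1} to obtain the component-wise least upper bounds, verify that $(l\vee_L m, q\vee_Q r)$ is an upper bound via the explicit computation, and then confirm minimality by another application of Lemma~\ref{lem-leq1}, with the key observation that $\varphi_q(L)=L$ forces $\varphi_{q^{-1}}(L)=L$. Your explicit emphasis on why the stronger hypothesis $\varphi_q(L)=L$ (rather than $\varphi_q(L)\subseteq L$) is needed is a nice addition.
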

\begin{proof} Suppose that $(l,q), (m,r)\in P$ have a common upper bound. By Lemma \ref{lem-leq1} and the fact that $(N,L)$ and $(H,Q)$ are weak quasi-lattices, we have $l \vee_L m <\infty$ and $q\vee_Q r<\infty.$ We claim that $(l \vee_L m, q\vee_Q r)\in P$ is the least upper bound of $(l,q)$ and $(m,r)$. Indeed, we have that \begin{equation*}\begin{aligned}(l,q)^{-1}(l \vee_L m, q\vee_Q r)&=(\varphi_{q^{-1}}(l^{-1}),q^{-1})(l \vee_L m, q\vee_Q r)\\&=(\varphi_{q^{-1}}(l^{-1}(l \vee_L m)), q^{-1}(q\vee_Q r)).
\end{aligned}\end{equation*} This belongs to~$L$ because $l\leq_L l\vee_Lm$, $q\leq_Q q\vee_Qr$ and $\varphi_{q^{-1}}(L)=\varphi_{q^{-1}}(\varphi_q(L))=L$. Similarly, $(m,r)\leq_P (l \vee_L m, q\vee_Q r)$.

Now let $(n,s)\in P$ be a common upper bound for $(l,q)$ and $(m,r)$. All we have to show is that $(l \vee_L m, q\vee_Q r)\leq_P(n,s)$. By Lemma \ref{lem-leq1}, we must have $l \vee_L m\leq_L n$ and $q\vee_Q r\leq_Q s$. Using that $\varphi_{(q\vee_Q r)^{-1}}(L)=L$, we obtain $(l \vee_L m, q\vee_Q r)\leq_P(n,s)$. So $(l \vee_L m, q\vee_Q r)$ is the least upper bound of~$(l,q)$ and $(m,r)$ as claimed.
\end{proof}

The assumption $\varphi_q(L)=L$ for all~$q\in Q$ is fairly strong, as we will see later in examples. However, in the situation of the previous lemma, we may use a natural controlled map to deduce nuclearity of~$\Cst(N\rtimes_\varphi H, L\rtimes_{\varphi}Q)$.

\begin{prop}\label{semi-controlled} Let $(N,L)$ and $(H,Q)$ be weak quasi-lattices with~$H \leq \mathrm{Aut}(N)$. Let $$\begin{array}{lccc} \mu\colon & N \rtimes_\varphi H& \rightarrow &H \\
				 &(n,h) &\mapsto & h
				 \end{array} $$ be the natural surjection. Suppose that $x,y\in P=L\rtimes_{\varphi}Q$ have a least upper bound $x\vee_P y$ in $P$. Then $\mu(x\vee_P y)=\mu(x)\vee_Q\mu(y)$. If $\varphi_q(L)=L$ for all $q \in Q,$ then $\mu$
is controlled in the sense of Definition \textup{\ref{def-controlled}}.
\end{prop}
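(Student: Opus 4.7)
The plan is to split the statement into two independent claims: first, the identity $\mu(x \vee_P y) = \mu(x) \vee_Q \mu(y)$ whenever $x \vee_P y$ exists, which I will establish using only the standing hypothesis $\varphi_q(L) \subseteq L$; and second, under the stronger hypothesis $\varphi_q(L) = L$, verification of the remaining axioms in Definition~\ref{def-controlled}.

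For the first claim, write $x = (l,q)$, $y = (m,r)$, and set $(n,s) \coloneqq x \vee_P y$. Applying Lemma~\ref{lem-leq1} to $(l,q), (m,r) \leq_P (n,s)$ yields $q, r \leq_Q s$, so $t \coloneqq q \vee_Q r$ exists in $Q$ and satisfies $t \leq_Q s$. The key step is to produce the same first coordinate with a smaller second coordinate: I will show that $(n,t) \in P$ is again a common upper bound for $x$ and $y$ and that $(n,t) \leq_P (n,s)$, so minimality of $(n,s)$ forces $s = t$. The inequality $(n,t) \leq_P (n,s)$ is immediate from $(n,t)^{-1}(n,s) = (e_N, t^{-1}s) \in P$; the fact that $(l,q) \leq_P (n,t)$ follows by computing $(l,q)^{-1}(n,t) = (\varphi_{q^{-1}}(l^{-1}n), q^{-1}t)$ and noting that $q^{-1}t \in Q$ (since $t \geq_Q q$) while $\varphi_{q^{-1}}(l^{-1}n) \in L$ is already encoded in $(l,q) \leq_P (n,s)$. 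The symmetric check handles $(m,r) \leq_P (n,t)$.

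For the second claim, assume $\varphi_q(L) = L$ for all $q \in Q$; this also gives $\varphi_{q^{-1}}(L) = L$, since $\varphi_{q^{-1}} = \varphi_q^{-1}$ as automorphisms of $N$. The map $\mu$ is a group homomorphism by construction and is order-preserving by Lemma~\ref{lem-leq1}, and condition~(a) of Definition~\ref{def-controlled} is exactly the first claim. For condition~(b), fix $q \in Q$; the fibre $\mu^{-1}(q) \cap P = L \times \{q\}$ inherits a partial order that reduces to the order on $L$, and the computation $(e_N, q)^{-1}(l, q) = (\varphi_{q^{-1}}(l), e_Q) \in P$ (here using $\varphi_{q^{-1}}(L) = L$) shows $(e_N, q) \leq_P (l, q)$ for every $l \in L$. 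Hence $\Sigma_q = \{(e_N, q)\}$, so (b)(i) and (b)(ii) hold trivially.

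The main obstacle is the truncation argument in the first claim: one must observe that the $\varphi$-compatibility condition $\varphi_{q^{-1}}(l^{-1}n) \in L$ encoded in $(l,q) \leq_P (n,s)$ depends only on the $N$-coordinates, not on $s$ itself, which is precisely what allows us to replace $s$ by the smaller group element $t = q \vee_Q r$ while preserving the upper-bound property. Everything else reduces to direct applications of Lemmas~\ref{lem-leq1} and~\ref{lem-leq2}.
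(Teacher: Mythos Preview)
Your proof is correct and follows essentially the same approach as the paper: both arguments show $\mu(x\vee_P y)=\mu(x)\vee_Q\mu(y)$ by observing that $(n,q\vee_Q r)$ is already a common upper bound for $(l,q)$ and $(m,r)$ (since the $L$-condition $\varphi_{q^{-1}}(l^{-1}n)\in L$ is independent of the second coordinate), forcing $s=q\vee_Q r$, and both verify condition~(\ref{cm-2-new}) by taking $\Sigma_q=\{(e_N,q)\}$. The only cosmetic difference is that you deduce $s=t$ via antisymmetry of $\leq_P$ after checking $(n,t)\leq_P(n,s)$ directly, whereas the paper applies Lemma~\ref{lem-leq1} to $(n,s)\leq_P(n,t)$ to get $s\leq_Q t$.
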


\begin{proof}Let $(G,P) = (N,L) \rtimes (H,Q)$. Let $(l,q),(m,r) \in P$ be such that $(l,q) \vee_P (m,r)$ exists. Write $(l,q) \vee_P (m,r)=(n,s)$. In order to prove the first part of the statement, it suffices to show that $s=q\vee_Qr$. We have that $q\vee_Qr$ exists and $q,r\leq_Qq\vee_Qr\leq_Qs$. Also, both $\varphi_{q^{-1}}(l^{-1}n)$ and $\varphi_{r^{-1}}(m^{-1}n)$ belong to $L$. Hence $(n,q\vee_Qr)$ is a common upper bound for $(l,q)$ and $(m,r)$, which gives $(n,s)\leq_P(n,q\vee_Qr)$. Thus $s\leq_Q q\vee_Qr$ and so $s=q\vee_Q r$ as desired. 

Suppose that $\varphi_q(L)=L$ for all $q \in Q.$ So $(G,P)$ is weakly quasi-lattice ordered by Lemma~\ref{lem-leq2}. We have proven that $\mu$ satisfies item \eqref{cm-1-new} of Definition~\ref{def-controlled}. In order to show that $\mu$ also satisfies  \eqref{cm-2-new}, we observe that for all~$q\in Q$,
$$\mu^{-1}(q)\cap P = \{(l,q): l\in L\}.$$ So the required set of minimal elements is $\Sigma_q=\{(e,q)\}$, where $e$ denotes the unit element of~$N$.
This verifies item \eqref{cm-2-new} of Definition~\ref{def-controlled}.
\end{proof}

We now consider examples (and non-examples) coming from free-by-cyclic groups $\FF \rtimes \ZZ$. 

\begin{example} Let $\FF = \langle a,b \rangle$ be the free group on two generators. We let~$\ZZ$ act on $\FF$ via the automorphism $\varphi\colon\FF\to \FF$ given by $\varphi(a)=b$ and $\varphi(b)=a$. In this case we have $\varphi(\FF^+) = \FF^+$ and so $(\FF,\FF^+)\rtimes_\varphi(\ZZ,\NN)$ is a weak quasi-lattice. By Proposition~\ref{semi-controlled}, the canonical projection $\mu\colon\FF\rtimes_\varphi\ZZ\to\ZZ$ is controlled. Since~$(\ker\mu)\cap(\FF^+\rtimes_\varphi\NN)=\FF^+$, Theorem~\ref{thm-nuclearity2} tells us that $\Cst(\FF\rtimes_\varphi\ZZ,\FF^+\rtimes_\varphi\NN)$ is nuclear.
\end{example}

\begin{example} More generally, if $\FF$ is the free group on $n$ generators and $\varphi$ acts by permuting generators, then~$\varphi(\FF^+)=\FF^+$ and $(\FF,\FF^+)\rtimes_\varphi(\ZZ,\NN)$ is weakly quasi-lattice ordered. By the same reasoning as above we conclude that $\Cst(\FF\rtimes_\varphi\ZZ,\FF^+\rtimes_\varphi\NN)$ is nuclear.
\end{example}

Next, we give an example of a free-by-cyclic group $\FF \rtimes_{\varphi} \ZZ$ with $\varphi(\FF^+)\subsetneq\FF^+$ and such that $(\FF,\FF^+)\rtimes_\varphi(\ZZ,\NN)$  is not a weak quasi-lattice order. 

\begin{example}\label{ex:nonexample} Let $\FF$ be freely generated by~$a$ and~$b$, and let~$\varphi$ be defined by $\varphi(a)=ba$, $\varphi(b)=b^2a$. Then $(\FF,\FF^+)\rtimes_\varphi(\ZZ,\NN)$  is not weakly quasi-lattice ordered. To see this, notice that $\varphi^2$ is the automorphism of~$\FF$ given by $$\varphi^2(a)=b^2aba,\qquad\varphi^2(b)=b^2ab^2aba.$$ Take $p=(a,2)$ and $q=(ab,1)$. Thus $p,q\in P=(\FF^+,\NN)$ have $n_1=(ab^2aba,2)$ and $n_2=(ab^2ab^2aba,2)$ as common upper bounds in~$P$. To see that $p$ and $q$ have no least upper bound, suppose that~$(s,m)$ is less than $n_1$ and $n_2$ in~$P$ and greater than $p,q$. Then $m=2$ and $ab\leq_{\FF^+}s\leq_{\FF^+}ab^2ab$ by Lemma~\ref{lem-leq1}. In particular, $s^{-1}(ab^2aba)\not\in \varphi^2(\FF^+)$, contradicting the assumption that~$(s,m)\leq_P n_1$.
\end{example}

In view of Example~\ref{ex:nonexample}, we now present some sufficient conditions for a semidirect product $(\FF,\FF^+)\rtimes_{\varphi}(H,Q)$ to be weakly quasi-lattice ordered.

\begin{prop}\label{prop:sufficient-cond}
Let $\FF$ be a free group on the set of generators $S$ and let $(\FF, \FF^+)$ be the quasi-lattice ordered group obtained from the free semigroup~$\FF^+$ on~$S$. Let $(H,Q)$ be a weak quasi-lattice with~$H \leq \mathrm{Aut}(\FF)$ such that if $q,r\in Q$ and~$q\vee_Q r<\infty$, then $q$ and $r$ are comparable. Suppose further that, for all~$q\in Q$, we have $\varphi_q(\FF^+) \subseteq \FF^+$  and that $\varphi_q(x)$  and $\varphi_q(y)$ have no common divisors whenever~$x,y\in S$ are different generators of~$\FF$. Then 
$$(\FF, \FF^+) \rtimes_\varphi (H,Q)$$
is a weak quasi-lattice. 
\end{prop}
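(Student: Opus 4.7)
The plan is to verify property (QL2) for $(G, P) = (\FF, \FF^+) \rtimes_\varphi (H, Q)$. Given $(l, q), (m, r) \in P$ with a common upper bound $(n, s) \in P$, Lemma~\ref{lem-leq1} yields $l, m \leq_{\FF^+} n$ and $q, r \leq_Q s$. In particular $q\vee_Q r<\infty$, so by hypothesis on $(H,Q)$ the elements $q$ and $r$ are comparable in $Q$; and since $l, m$ are both prefixes of $n$ in the free semigroup, they are also comparable there. Without loss of generality $q \leq_Q r$, and I first treat the sub-case $l \leq_{\FF^+} m$ (the sub-case $m <_{\FF^+} l$ is handled analogously by exchanging the roles of the two prefix codes below). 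From the upper-bound relations one obtains $l^{-1}n \in \varphi_q(\FF^+ \cup \{e\})$ and $m^{-1}n \in \varphi_r(\FF^+ \cup \{e\}) = \varphi_q\bigl(\varphi_{q^{-1}r}(\FF^+ \cup \{e\})\bigr) \subseteq \varphi_q(\FF^+ \cup \{e\})$, using $q \leq_Q r$.

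The key structural input is that the no-common-divisors hypothesis says precisely that $\{\varphi_p(x):x\in S\}$ is a prefix code in $\FF^+$ for every $p \in Q$, so that $\varphi_p(\FF^+ \cup \{e\})$ is a free submonoid of $\FF^+$ with unique factorization. I would factor $l^{-1}n=\varphi_q(\alpha_1)\cdots\varphi_q(\alpha_k)$ in the $\varphi_q$-code and, since $l^{-1}m \in \FF^+$ is a prefix of $l^{-1}n$, write $l^{-1}m=\varphi_q(\alpha_1)\cdots\varphi_q(\alpha_i)\cdot t$ with $t$ a (possibly empty) proper prefix of $\varphi_q(\alpha_{i+1})$. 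If $t=e$, then $l^{-1}m\in\varphi_q(\FF^+\cup\{e\})$ and $(l,q)\leq_P(m,r)$, so $(m,r)$ is the join. If $t\neq e$, the prefix-code property identifies $\alpha_{i+1}$ uniquely from $t$, and I would iteratively extend $l\cdot\varphi_q(\alpha_1)\cdots\varphi_q(\alpha_{i+1})$ by successive $\varphi_q$-pieces $\varphi_q(\alpha_{i+2}), \varphi_q(\alpha_{i+3}), \ldots$ (as read off from the $\varphi_q$-factorization of $l^{-1}n$) until the suffix relative to $m$ first lies in $\varphi_r(\FF^+\cup\{e\})$. The existence of the common upper bound $(n,s)$ ensures termination at some $k'\leq k$, yielding a candidate $n_0 := l\cdot\varphi_q(\alpha_1)\cdots\varphi_q(\alpha_{k'})$.

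By construction $(n_0,r)\in P$ is a common upper bound of $(l,q)$ and $(m,r)$. For minimality one verifies that for any further common upper bound $(n',s')$ the $\varphi_q$-factorization of $l^{-1}n'$ must begin with exactly the pieces $\varphi_q(\alpha_1),\ldots,\varphi_q(\alpha_{k'})$ (the first $i+1$ pieces being forced by the prefix $l^{-1}m$ and the choice of $\alpha_{i+1}$, and the remaining pieces up to $k'$ being forced by the $\varphi_r$-alignment requirement on $m^{-1}n'$), so that $n_0 \leq_{\FF^+} n'$ and moreover $n_0^{-1}n' \in \varphi_r(\FF^+\cup\{e\})$; this gives $(n_0,r)\leq_P(n',s')$ and hence $(l,q)\vee_P(m,r) = (n_0,r)$. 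The hard step I anticipate is precisely this minimality argument: since the hypothesis controls only prefixes of the generator images $\varphi_p(x)$ and not their suffixes, showing that the truncation index $k'$ is intrinsic to $l, m, q, r$ (and not merely an artifact of the chosen upper bound $(n,s)$) will require a delicate comparison of the $\varphi_q$- and $\varphi_r$-prefix-code factorizations, using the refinement $\varphi_r = \varphi_q \circ \varphi_{q^{-1}r}$ in an essential way.
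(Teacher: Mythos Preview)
Your proposal is correct and follows essentially the same route as the paper. The paper normalises by assuming $l\leq_{\FF^+} m$ (rather than $q\leq_Q r$) and, instead of building $n_0$ from a chosen upper bound $(n,s)$, defines the set
\[
W=\{\alpha\in\varphi_r(\FF^+): l^{-1}m\alpha\in\varphi_q(\FF^+)\}
\]
and proves directly that $W$ has a unique minimal element $w$, then sets the join equal to $(mw,\,q\vee_Q r)$. This sidesteps the issue you flag as the ``hard step'': because $W$ is defined purely in terms of $l,m,q,r$, there is no dependence on $(n,s)$ to eliminate. The uniqueness argument for the minimal element is exactly the alternating $\varphi_q$/$\varphi_r$ prefix-code comparison you anticipate, carried out letter by letter. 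In the final minimality step the paper uses one further consequence of the prefix-code hypothesis that you use implicitly but do not state: $\varphi_r(p^{-1}p')\in\FF^+$ if and only if $p^{-1}p'\in\FF^+$, which is what upgrades $n_0\leq_{\FF^+} n'$ to $n_0^{-1}n'\in\varphi_r(\FF^+)$. Your sketch and the paper's proof are the same argument in two packagings; the paper's formulation via $W$ is a little cleaner because the intrinsic nature of the candidate is built in from the start.
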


\begin{proof} Let $l,m\in\FF^+$, $q,r\in Q$ be such that~$(l,q)$ and~$(m,r)$ have a common upper bound in~$(\FF^+,Q)$. In particular, $q\vee_Q r<\infty$ and $l\vee_{\FF^+}m<\infty$. We may suppose, without loss of generality, that~$l$ is a prefix of~$m$. Say $(l,q), (m,r)\leq_P(n,s)$. Then $$(\varphi_{q^{-1}}(l^{-1}n), q^{-1}s), (\varphi_{r^{-1}}(m^{-1}n), r^{-1}s)\in P.$$ Thus there exist $v,v'\in\FF^+$ such that $$\varphi_q(v)=l^{-1}n,\qquad \varphi_r(v')=m^{-1}n,$$ which gives $l\varphi_q(v)=m\varphi_r(v')$. It follows that the set $$W\coloneqq\{\alpha\in\varphi_r(\FF^+): l^{-1}m\alpha\in\varphi_q(\FF^+)\}$$ is nonempty and bounded below by~$e$. We claim that~$W$ has a unique minimal element. Indeed, if~$l^{-1}m\in\varphi_q(\FF^+)$, then~$e$ is the unique minimal element of~$W$. Now suppose that~$l^{-1}m\not\in\varphi_q(\FF^+)$ and let~$w,w'$ be minimal elements of~$W$. Let~$p,p'\in \FF^+$ with~$\varphi_q(p)=l^{-1}mw$ and~$\varphi_q(p')=l^{-1}mw'$. Since~$l^{-1}m$ is a prefix of both~$\varphi_q(p)$ and $\varphi_q(p')$, our assumption that different generators have relatively prime images under~$\varphi_q$ implies that the first letters of~$p$ and~$p'$ coincide. Suppose that~$s_1$ is the first letter of~$p$ and~$p'$. If~$\varphi_q(s_1)<_{\FF^+}l^{-1}m$, then the first letters of~$s_1^{-1}p$ and $s_1^{-1}p'$ must coincide. Proceeding with this argument, we can find a largest prefix~$u$ of~$p$ and~$p'$ so that~$\varphi_q(u)<_{\FF^+}l^{-1}m$. Again the first letter of~$u^{-1}p$, say~$t$, must coincide with the first letter of~$u^{-1}p'$ because $\varphi_q(u^{-1})l^{-1}m$ is a nontrivial prefix of both $\varphi_q(u^{-1}p)$ and $\varphi_q(u^{-1}p')$. We also have~$l^{-1}m<_{\FF^+}\varphi_q(ut)$ because of our assumption that $l^{-1}m\not\in\varphi_q(\FF^+)$. Now~$(l^{-1}m)^{-1}\varphi_q(ut)$ is a prefix of both~$w$ and~$w'$. Using that different elements of~$S$ have relatively prime images under~$\varphi_r$, we deduce that there are~$t_1, v, v'\in\FF^+$ with $w=\varphi_r(t_1v)$, $w'=\varphi_r(t_1v')$. If~$\varphi_r(t_1)=(l^{-1}m)^{-1}\varphi_q(ut)$, then~$v=v'=e$ and~$w=w'$. Otherwise, we have either~$\varphi_r(t_1)<_{\FF^+}(l^{-1}m)^{-1}\varphi_q(ut)$ or~$(l^{-1}m)^{-1}\varphi_q(ut)<_{\FF^+}\varphi_r(t_1)$. The latter inequality requires the first letter of~$v$ be equal to the first letter of~$v'$, while the former inequality implies that the first letters of~$(ut)^{-1}p$ and~$(ut)^{-1}p'$ coincide. Employing this argument finitely many times, we will arrive at~$p=p'$ or $v=v'$. Therefore~$w=w'$ and this proves our claim that~$W$ has a unique minimal element.

Finally, we prove that~$(mw, q\vee r)$ is the least upper bound of~$(l,q)$ and~$(m,r)$. Let~$(n,s)$ be an upper bound for~$(l,q)$ and~$(m,r)$. Then~$s\geq q\vee r$. Moreover, $l^{-1}n=l^{-1}mm^{-1}n\in\varphi_q(\FF^+)$ and~$m^{-1}n\in\varphi_r(\FF^+)$. Hence~$m^{-1}n\in W$ and it follows that $w^{-1}(m^{-1}n)\in \FF^+$. Let~$p'\in\FF^+$ be such that~$m^{-1}n=\varphi_r(p')$. Thus $\varphi_r(p^{-1}p')\in\FF^+$. However, using our hypothesis again we deduce that $\varphi_r(p^{-1}p')\in\FF^+$ if and only if $p^{-1}p'\in\FF^+$. So~$(mw)^{-1}n=w^{-1}m^{-1}n\in\varphi_r(\FF^+)$. By a similar argument, $(mw)^{-1}n\in\varphi_q(\FF^+)$. Here we invoke the fact that  if~$q\vee r<\infty$ then $q\vee r=q$ or $q\vee r = r$ to guarantee that $(mw, q\vee r)\leq (n,s)$ in $(\FF, \FF^+) \rtimes_\varphi (H,Q)$.
\end{proof}

We will now illustrate how Proposition \ref{prop:sufficient-cond} can be used to produce examples of weak quasi-lattices as well as controlled maps.

\begin{example}\label{ex:aut-free-group} Let $\varphi\colon\FF\to \FF$ be given by $\varphi(a)=ab$ and $\varphi(b)=b$. Here $a \notin \varphi(\FF^+),$ and so Lemma \ref{lem-leq2} does not apply to this case. Nevertheless, $(\FF,\FF^+)\rtimes_\varphi (\ZZ,\NN)$ is still a weak quasi-lattice by Proposition~\ref{prop:sufficient-cond}, because $\varphi(a)=ab$ and $\varphi(b)=b$ are relatively prime. 

We will prove that $\Cst(\FF^+\rtimes_\varphi\NN)$ is nuclear using Theorem~\ref{thm-nuclearity2}. To do so, consider the homomorphism $\sigma_a\colon\FF\to\ZZ$ that sends $a$ to~$1$ and $b$ to $0$. Since $\varphi$ does not change the exponent sum of~$a$ in a word, this induces a homomorphism~$\mu\colon\FF\rtimes_{\varphi}\ZZ\to\ZZ^2$ defined by $\mu(w,n)\coloneqq (\sigma_a(w),n)$. We will see that $$\mu\colon (\FF\rtimes_\varphi\ZZ, \FF^+\rtimes_\varphi\NN)\to(\ZZ^2,\NN^2)$$ is controlled. 

We begin by showing that $\mu(x\vee_Py)=\mu(x)\vee\mu(y)$ when it exists. Let~$x=(p,m)$ and $y=(q,n)$ with a common bound $(s,l)$ in~$P=\FF^+\rtimes_\varphi\NN$. In particular, $p$ and~$q$ have a common upper bound in~$\FF^+$ and we may suppose, without loss of generality, that~$p\leq_{\FF^+}q$. That is, $p$ is a prefix of~$q$. Since~$q\leq_{\FF^+}s$ we have $p^{-1}s=p^{-1}qq^{-1}s\in \varphi_m(\FF^+)$. Thus $p^{-1}q$ has reduced form $$b^{i_0}ab^{i_1}a\cdots ab^{i_k},\qquad(k\geq 0)$$ where $i_0, i_k\in \NN$ and~$i_j\geq  m$ for all~$j\neq 0, k$. It follows that the least upper bound for~$(p,m)$ and $(q,n)$ is given by $$(p,m)\vee_P(q,n)=\begin{cases}(q, m\vee_{\NN}n) ,& \text{if } k=0 \text{ or } i_k\geq m \\
(qb^{m-i_k},m\vee_{\NN}n) & \text{else}.\end{cases}$$ From this we deduce that $\mu(x\vee_Py)=\mu(x)\vee\mu(y)$, which is item (\ref{cm-1-new}) of Definition~\ref{def-controlled}.

To see that $\mu$ satisfies item \eqref{cm-2-new} of Definition~\ref{def-controlled}, take $(m,n)\in \NN^2$. Observe that~$\sigma_a(p)$ is precisely the exponent sum of~$a$ in~$p\in\FF$. Then $\mu^{-1}(m,n)\cap P$ consists of all elements of the form $(p,n)\in\FF^+\rtimes_\varphi\NN$ for which the exponent sum $\sigma_a(p)$ is~$m$. Set $$\Sigma_{(m,n)}\coloneqq\{(p,n)\in\FF^+\rtimes_\varphi \NN : \sigma_a(p)=m\text{ and }b \text{ is not a suffix of }p\}.$$ Since $b=\varphi_n^{-1}(b)\in\varphi^{-1}_n(\FF^+)$, every $(q,n)\in\mu^{-1}(m,n)\cap P$ can be written as the product $(p,n)\cdot (b^k,0)$ for some~$(p,n)\in\Sigma_{(m,n)}$ and~$k\in\NN$ . So $(p,n)\leq_P(q,n)$. Moreover, if $(p,n), (q,n)\in\Sigma_{(m,n)}$ and $p\neq q$, then $p\vee_{\FF^+}q=\infty$, because~$\sigma_a(p^{-1}q)=0$ and both~$p$ and~$q$ end in~$a$ and so cannot be prefixes of one another. Hence $(p,n)\vee_P(q,n)=\infty$ and we conclude that~$\mu$ is controlled. 

Since~$(\ker\mu)\cap(\FF^+\rtimes_\varphi\NN)=\langle b\rangle\times\{0\}$, it follows from Theorem~\ref{thm-nuclearity2} that~$\Cst(\FF\rtimes_\varphi\ZZ,\FF^+\rtimes_\varphi\NN)$ is nuclear.

\end{example}

\begin{example} Let $c,d\geq 1$ such that $c+d$ is even. We will construct an action of the Baumslag--Solitar group $\mathrm{BS}(c,-d)=\langle a,b\colon ab^c=b^{-d}a\rangle$ on the free group~$\FF$ with generators~$\{x,y,z\}$.

Let~$\varphi_a, \varphi_b\colon\FF\to\FF$ be given by \begin{equation*}\begin{aligned}\varphi_a(x)&=xy,&\qquad\varphi_a(y)&=y,&\qquad\varphi_a(z)&=zy;&\\\varphi_b(x)&=z,&\qquad\varphi_b(y)&=y,&\qquad\varphi_b(z)&=x.&\end{aligned}\end{equation*} Then $\varphi_a,\varphi_b$ are commuting automorphisms of~$\FF$, and $\varphi^2_b=\id_{\FF}$. Since~$c+d$ is even, $\varphi_b^{c+d}=\id_{\FF}$. Thus~$\varphi_a, \varphi_b$ induce an action $$\varphi\colon\mathrm{BS}(c,-d)\to\mathrm{Aut}(\FF)$$ satisfying $\varphi_p(\FF^+)\subseteq\FF^+$ for all $p\in\mathrm{BS}(c,-d)^+$. Notice that $\varphi_p(x),\varphi_p(y)$ and $\varphi_p(z)$ are relatively prime. Hence $$(\FF,\FF^+)\rtimes_\varphi(\mathrm{BS}(c,-d),\mathrm{BS}(c,-d)^+)$$ is weakly quasi-lattice ordered by Proposition~\ref{prop:sufficient-cond}.

We will now show that $\Cst(\FF^+\rtimes_\varphi\mathrm{BS}(c,-d)^+)$ is nuclear. Consider the homomorphism $\sigma\colon\FF\to\ZZ$ that sends both~$x$ and $z$ to~$1$ and~$y$ to~$0$. Let $\theta\colon\mathrm{BS}(c,-d)\to\ZZ$ be the height map. Set $\mu(u,w)\coloneqq(\sigma(u),\theta(w))$. Then $$\mu\colon\FF\rtimes_\varphi\mathrm{BS}(c,-d)\to\ZZ^2$$ is a group homomorphism because $\varphi$ does not change the exponent sum $\sigma(u)=\sigma_x(u)+\sigma_z(u)$ in a word~$u$. We claim that~$\mu$ is controlled. Let $(r,p), (s,q)\in P=\FF^+\rtimes_\varphi\mathrm{BS}(c,-d)^+$ with a common upper bound in~$P$. We must have $r\vee_{\FF^+}s<\infty$. Without loss of generality, assume that $r$ is a prefix of~$s$. If~$\theta(p)=0$, then $(r,p)\vee_P(s,q)=(s, p\vee q)$ and so $$\mu((r,p)\vee_P(s,q))=\mu(r,p)\vee_{\ZZ^2}\mu(s,q).$$ In case $\theta(p)>0$, the same reasoning as in Example~\ref{ex:aut-free-group} gives a least $m\in \NN$ so that $r^{-1}sy^m\in\varphi_p(\FF^+)$. Then $(r,p)\vee_P(s,q)=(sy^m, p\vee q)$ and so $$\mu((r,p)\vee_P(s,q))=\mu(r,p)\vee_{\ZZ^2}\mu(s,q).$$

It remains to show that~$\mu$ satisfies item \eqref{cm-2-new} of Definition~\ref{defn-less-controlled}. First, for $m\in\NN$, we set $$\Sigma_{m}\coloneqq\{r\in\FF^+:\sigma(r)=m\text{ and }y \text{ is not a suffix of }r\}.$$ Then $r\vee_{\FF^+}s=\infty$ whenever $r, s\in \Sigma_{m}$ and $r\neq s$. Second, because the height map $\theta\colon \mathrm{BS}(c,-d)\to\ZZ$ is controlled, for all $k\in\NN$, there exist a set~$\Lambda_{k}$ and a decreasing sequence $(s_n^{\lambda})_{n\in\NN}\subseteq\mu^{-1}(k)\cap\mathrm{BS}(c,-d)^+$ associated to each~$\lambda\in\Lambda_k$ so that the conditions of Definition~\ref{defn-less-controlled} hold. Finally, for each $(m,k)\in\NN^2$ we set $$\Lambda_{(m,k)}\coloneqq\Sigma_m\times\Lambda_k.$$ Thus $\gamma_1,\gamma_2\in\Lambda_{(m,k)}$ and $\gamma_1\neq \gamma_2$ entails $\gamma_1\vee_P\gamma_2=\infty$. The decreasing sequence associated to~$(r,\lambda)\in\Sigma_m\times\Lambda_k$ has $n$-th term~$(r,s_n^{\lambda})$. Given $(s,p) \in P$ with~$\mu(s,p)=(m,k)$, write $s=ry^l$, where~$r\in\FF^+$ ends in~$x$ or~$z$ and~$l\in\NN$. Because~$y=\varphi_q(y)\in\varphi_q(\FF^+)$ for all~$q\in\mathrm{BS}(c,-d)^+$, it follows that there are~$n\in \NN$ and $\lambda\in\Lambda_k$ such that $(r,s^\lambda_n)\leq_{P}(s,p)$. This completes the proof that $\mu$ also satisfies item \eqref{cm-2-new} of Definition~\ref{defn-less-controlled} and, therefore, is a controlled map. 

Now $(\ker\mu)\cap(\FF^+\rtimes_{\varphi}\mathrm{BS}(c,-d)^+)=\langle y\rangle\times \langle b\rangle$. Hence $\Cst(\FF\rtimes_{\varphi}\mathrm{BS}(c,-d), \FF^+\rtimes_{\varphi}\mathrm{BS}(c,-d)^+)$ is nuclear by Theorem~\ref{T:nuclear}.
\end{example}

We finish this  section by presenting an example with a slightly different flavour. In particular, Lemma~\ref{lem-leq2} and Proposition~\ref{prop:sufficient-cond} do not apply in this case and we have to show the existence of least upper bounds directly.

\begin{example}Let $\FF = \langle a,b \rangle$ be the free group on two generators and let $\varphi$ be the automorphism given by $\varphi(a) = ba$ and $\varphi(b)=b.$ Then the semidirect product $(\FF,\FF^+)\rtimes_\varphi(\ZZ,\NN)$ is weakly quasi-lattice ordered. To see this, let~$(p,m),(q,n)\in P=\FF^+\rtimes_{\varphi}\NN$ with an upper bound in~$P$. This implies $p\vee_{\FF^+}q <\infty$ and we can assume, say, $p\leq q$. Also, there is $r\in \FF^+$ such that $p^{-1}r=p^{-1}qq^{-1}r\in\varphi_m(\FF^+)$. Hence $p^{-1}q$ must have reduced form $$b^{i_0}ab^{i_1}a\cdots ab^{i_k},\qquad(k\geq 0)$$where $i_k\in\NN$ and $i_j\geq m$ for all~$0\leq j\leq k-1$. From this we deduce that~$p^{-1}q$ itself lies in $\varphi_m(\FF^+)$. Thus $(p,m)\vee_P(q,n)=(q,m\vee n)$ and this shows that~$(\FF,\FF^+)\rtimes_\varphi(\ZZ,\NN)$ is weakly quasi-lattice ordered as claimed.

We will now see that the canonical surjection $$\mu\colon \FF \rtimes_\varphi\ZZ\twoheadrightarrow \ZZ$$ is controlled. By Proposition \ref{semi-controlled}, $\mu(x\vee_Py)=\mu(x)\vee_{\ZZ}\mu(y)$ holds for all~$x,y\in P$ such that~$x\vee_P y<\infty$. So we need to establish condition \eqref{cm-2-new} of Definition~\ref{def-controlled}. Let $n\in\NN$. If~$n=0$, then $\mu^{-1}(0)\cap P=\FF^+\times\{0\}$ and we set~$\Sigma_0\coloneqq\{(e_{\FF^+},0)\}$. That is, $\Sigma_0$ is the unit set given by the identity of~$\FF^+\rtimes_\varphi\NN$. Suppose~$n>0$. We let 
$$\Sigma_n\coloneqq\{(\sigma,n)\in\mu^{-1}(n)\cap P: \sigma =\tau a, \tau\in\FF^+ \text{ and }b^n \text{ is not a suffix of }\tau\}.$$

We prove that $\sigma_1=\sigma_2$ whenever $(\sigma_1,n),(\sigma_2,n)\in\Sigma_n$ and $(\sigma_1,n)\vee_P(\sigma_2,n)<\infty$. Indeed, $(\sigma_1,n)\vee_P(\sigma_2,n)<\infty$ entails $\sigma_1\vee_{\FF^+}\sigma_2<\infty$. So we may assume that $\sigma_1$ is a prefix of~$\sigma_2$. Also, there exists $r\in\FF^+$ such that $\sigma_1^{-1}\sigma_2$ is a prefix of $\varphi_n(r)$. In particular, $\sigma_1^{-1}\sigma_2\neq e_{\FF^+}$ would imply that $\sigma_1^{-1}\sigma_2$ ends in~$b^na$ because~$a$ is a suffix of~$\sigma_2$. Since $\sigma_1^{-1}\sigma_2\in\FF^+$ is also a suffix of~$\sigma_2$, we must have $\sigma_1=\sigma_2$, as desired. 

Now let~$(x,n)\in P$. Write $x=\sigma u$, where $(\sigma, n)\in\Sigma_n$ and $u\in\varphi_n(\FF^+)$. More precisely, $u$ is the tail of~$x$ in~$\varphi_n(\FF^+)$. Thus $(x,n)\geq_P(\sigma, n)$ and this completes the proof that $\mu$ is controlled. 

Finally, because $\ker\mu \cap P=\FF^+\times\{0\}$, Theorem~\ref{T:nuclear} tells us that $\Cst(\FF\rtimes_{\varphi}\ZZ, \FF^+\rtimes_{\varphi}\NN)$ is also nuclear.
\end{example}

\subsection{Graph products}

  Here we briefly introduce notation from \cite{CL1} that we need. Let $\Gamma$ denote a graph with vertex set $\Lambda$ and edge set $E(\Gamma)=\{\{I,J\}: I,J\in\Lambda, I\neq J\}$. If $\{I, J\}\in E(\Gamma)$, we say that $I$ and $J$ are \emph{adjacent}.  Let $\{G_I\}_{I\in\Lambda}$ be a family of groups. Then the graph product 
\[
G\coloneqq\Gamma_{I\in\Lambda} G_I
\]
is the quotient of the free product $*_{\Lambda} G_I$ by the smallest normal subgroup containing $x_1x_2x_1^{-1}x_2^{-2}$ for all pairs $x_1\in G_I$ and $x_2\in G_J$ where $I$ and $J$ are adjacent.  A generating set for $G$ is $\sqcup_{I\in\Lambda} G_I\setminus\{e\}$. Given a generator $x$, we write $I(x)$ for the unique vertex $I$ such that $x\in G_I$. 

An \emph{expression} for an element of $x\in G$ is a word $x_1x_2\dots x_l$ in the generators which equals $x$. The graph product relations allow modification of an expression by replacing a subexpression $x_ix_{i+1}$ with $x_{i+1}x_i$ if $I(x_i)$ is adjacent to $I(x_{i+1})$; this replacement is called a \emph{shuffle}. If an expression contains a subexpression $x_ix_{i+1}$ with $I(x_i)=I(x_{i+1})$, then we obtain a shorter expression for $x$ by \emph{amalgamating} the subexpression $x_ix_{i+1}$ of length two into the subexpression $\hat x_i=x_ix_{i+1}$ of length one.  An expression is called \emph{reduced} if its length cannot be reduced by finitely many shuffles  and amalgamation. The \emph{length} $l(x)$ of $x$ is the length of any reduced expression equal to $x$.  An \emph{initial vertex} on $x$ is a vertex $I\in\Lambda$ such that   $x_i\in G_I$ is a generator in a reduced expression $x_1\dots x_{l(x)}$ for $x$  and $I(x_i)$ is adjacent to $I(x_j)$ for $j<i$. If $I$ is an initial vertex on $x$ corresponding to the generator $x_i$, we write $x_I$ for $x_i$; if $I$ is not an initial vertex on $x$ we set $x_I=e$.

Now suppose that each $G_I$ is partially ordered with positive cone $P_I$.   A reduced expression $x_1 x_2\dots x_{l(x)}$ for $x$ is \emph{positive} if $x_i\in P_{I(x_i)}$ for $1\leq i\leq l$. We say that $x$ is \emph{positive} if it has a reduced expression which is positive (and then all reduced expressions of $x$ are positive). We let $P$ denote the subsemigroup of $G$ consisting of positive elements of $G$.  Then $(G,P)$ is a partially ordered group.

Theorem~10 of \cite{CL1} says that a graph product of quasi-lattice ordered groups is a quasi-lattice ordered group, and then \cite[Proposition~13]{CL1} finds an effective algorithm to compute a least upper bound when it exists.  We now prove similar results for weakly quasi-lattice ordered groups. Our  proof is very different to those  in  \cite{CL1}. The proof of  \cite[Theorem~10]{CL1} used the characterisation of quasi-lattice ordered group from item (iv) of \cite[Lemma~7]{CL1} which does not apply to a weak quasi-lattice. The algorithm of \cite[Proposition~13]{CL1} is based on the equality \eqref{eq-algorithm} below, and to prove it \cite[Theorem~10]{CL1} is used. 

\begin{thm}\label{thm graph} A graph product $(G,P)=\Gamma_{I\in\Lambda} (G_I, P_I)$ of weakly quasi-lattice ordered groups is a weakly quasi-lattice ordered group. In particular, if $x,y\in P$ have a common upper bound in $P$ and  $I$ is any vertex in $\Lambda$, then with $x=x_Ix'$ and $y=y_Iy'$ we have
\begin{equation}\label{eq-algorithm}
x\vee y=(x_I\vee y_I)(x'\vee y'),
\end{equation}
and the vertices on $x'\vee y'$ are vertices on $x'$ and $y'$. 
\end{thm}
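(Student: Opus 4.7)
The plan is to prove the statement by strong induction on $n := l(x) + l(y)$, simultaneously establishing the existence of $x \vee y$ in $P$ (so that $(G,P)$ satisfies (QL2)) and the formula \eqref{eq-algorithm} for every vertex $I$. The cornerstone is a lemma on how initial vertices behave under the order: \emph{if $x \leq z$ in $P$ and $J \in \Lambda$, then $x_J \leq z_J$ in $P_J$}. When $J$ is initial on $x$, begin with a reduced expression $x = x_J \cdot (x_J^{-1} x)$ and write $z = x_J \cdot (x_J^{-1} x)(x^{-1} z)$; shuffles and amalgamations reducing this expression can only combine $x_J$ with further elements of $P_J$ brought to position two, so the initial $G_J$-letter of $z$ lies in $x_J \cdot P_J$.

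For the base $n = 0$ the claim is immediate, and when one of $x, y$ equals $e$ the formula is trivial; so assume $x, y$ are both non-trivial and fix a vertex $I$ that is initial on $x$ (say). The key lemma gives $x_I, y_I \leq z_I$ in $P_I$, so (QL2) for $P_I$ furnishes $m := x_I \vee y_I$ with $m \leq z_I$. Write $z_I = m c$ and $z = z_I z_\circ$ with $c \in P_I$ and $z_\circ := z_I^{-1} z \in P$, and set $m_x := x_I^{-1} m$ and $m_y := y_I^{-1} m$ in $P_I$. The main technical step is to produce a common upper bound of $x' := x_I^{-1} x$ and $y' := y_I^{-1} y$ in $P$: using that $I$ is initial on neither $x'$ nor $y'$ together with the graph-product commutation relations, one constructs such an element of $P$ (in the abelian case $P_I = \NN$, which is the one arising in our applications, one may take $(m_x \vee m_y)\, c\, z_\circ$). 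Since $l(x') + l(y') < n$, the induction hypothesis delivers $r := x' \vee y'$, along with the property that every vertex on $r$ is a vertex on $x'$ or on $y'$.

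Set $w := m \cdot r$. A direct computation using $x_I, y_I \leq m$ and $x', y' \leq r$ verifies $w \geq x$ and $w \geq y$. For minimality, any upper bound $z'$ of $\{x, y\}$ in $P$ satisfies $z'_I \geq m$ by the key lemma, and the analogous decomposition of $z'$ together with $x', y' \leq r$ yields $w \leq z'$. Hence $w = x \vee y$, and the vertex-containment from $r$ transfers to $x \vee y$, closing the induction.

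The main obstacle is the construction of a common upper bound of $x'$ and $y'$ from $z$ in the non-abelian setting: the naive choice $m^{-1} z$ dominates $x'$ only when $m_x = e$ (equivalently $y_I \leq x_I$), and symmetrically for $y'$. The remedy exploits that neither $x'$ nor $y'$ has $I$ as an initial vertex, so that the complementary factors $m_x$ and $m_y$ can be threaded through $z_\circ$ using the commutation relations between $G_I$ and the initial vertices of $z_\circ$, producing a single element of $P$ above both. This is also why the vertex-containment clause must be part of the inductive hypothesis: it is what controls the reduction of the product $m \cdot r$ and permits the upper-bound and minimality computations above to go through cleanly.
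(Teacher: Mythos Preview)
Your overall architecture (strong induction on $l(x)+l(y)$, reducing to the factor $P_I$ and the tails $x',y'$) matches the paper's. The genuine gap is in your ``key lemma'' and its consequences: you extract only the inequality $x_J\leq z_J$ from $x\leq z$, but the paper relies on the full strength of \cite[Lemma~11]{CL1}, namely the dichotomy that for $x\leq z$ and an initial vertex $I$ one has \emph{either} $x_I=z_I$ \emph{or every vertex on $x'$ is adjacent to $I$}. This second alternative is exactly what you are missing, and it is what makes the non-abelian step go through without any ad hoc construction.

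Concretely, with the dichotomy in hand the paper shows that $z':=z_I^{-1}z$ (your $z_\circ$) is itself a common upper bound for $x'$ and $y'$: if $x_I=z_I$ this is immediate by left invariance, while if $x_I<z_I$ then all vertices on $x'$ are adjacent to $I$, so $x_I^{-1}z_I$ commutes past $(x')^{-1}$ and one reads off $(x')^{-1}z'\in P$. Your proposed remedy --- threading $m_x,m_y$ through $z_\circ$ via ``commutation relations between $G_I$ and the initial vertices of $z_\circ$'' --- does not work as stated, because there is no reason the initial vertices of $z_\circ$ should be adjacent to $I$; it is the vertices of $x'$ (and $y'$) that are adjacent to $I$, and only in the strict case $x_I<z_I$. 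The same dichotomy is needed again in the minimality step: to show $(x_I\vee y_I)(x'\vee y')\leq w$ for an arbitrary upper bound $w$, one must commute $(x_I\vee y_I)^{-1}w_I$ past $(x'\vee y')^{-1}$, and this is justified precisely because the vertex-containment clause and the dichotomy together force the vertices on $x'\vee y'$ to be adjacent to $I$ in the relevant case. Restricting to abelian $P_I$ dodges the issue but does not prove the theorem.
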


\begin{proof}  We start by proving that  $(G,P)$ is weakly quasi-lattice ordered.  For $n\in\NN$ we set
\[
P_n=\{(x,y)\in P\times P \colon l(x)+l(y)=n\text{\ and\ } x, y\text{\ have a common upper bound}\}.
\]
Notice that if $(x,y)\in P_0$, then $x=y=e$ and $x\vee y=e$; trivially, all the vertices on $x\vee y$ are   vertices on $x$ and $y$. 
We will prove the  result  by induction on $n$. For $n\geq  0$, assume that if $(x,y)\in P_n$, then $x\vee y$ exists and  the vertices on $x\vee y$ are the vertices on $x$ and $y$. 

Let $(x,y)\in P_{n+1}$.  Then  there exists a common upper bound $z\in P$ for $x$ and $y$.  Let $I\in\Lambda$   be  an initial vertex on $x$ or $y$.  We write $x=x_Ix'$, $y=y_Iy'$ and $z=z_Iz'$.   \cite[Lemma~11]{CL1} applies to a graph product of partially ordered groups; applying it to $x\leq z$ gives
\begin{enumerate}
\item[(i)] $x_I\leq z_I$,  and
\item[(ii)] either $x_I= z_I$ or  the vertices on $x'$ are adjacent to $I$. 
\end{enumerate}
It follows  that $x_I\leq z_I$ and $y_I\leq z_I$. Since $(G_I, P_I)$ is a weakly quasi-lattice ordered group we have $x_I\vee y_I\leq z_I$.

We claim that $z'$ is a common upper bound for $x'$ and $y'$. If $x_I=z_I$, then 
\[
x\leq z\Longrightarrow x_Ix'\leq  z_Iz'\Longrightarrow x'\leq z'
\]
 using left invariance.

Next suppose that $x_I<z_I$. Then the vertices on $x'$ are adjacent to $I$. In particular, $I$ is not a vertex on $x'$, as otherwise we could amalgamate and thus shorten the expression.  Since $x\leq z$ we have 
\[
 x^{-1}z=(x')^{-1}x_I^{-1}z_Iz'=x_I^{-1}z_I (x')^{-1}z'\in P.
\]
There exists $v_I\in P_I$ and $v'$ in $P$ such that $v:=x^{-1}z=v_Iv'$. Let $w_1\dots w_{l(w)}$ be a reduced expression for $(x')^{-1}z'$. We now want to compare the two expressions
\[
x_I^{-1}z_I w_1\dots w_{l(w)}=v_Iv'
\]
for $x^{-1}z$. For $1\leq i\leq l(w)$, if $I(w_i)=I$, then $w_i$ is in $P_I$ because $I$ is not a vertex on $(x')^{-1}$. Equating $(x^{-1}z)_I$ in both expressions, we see that $x_I^{-1}z_I\leq v_I$. Now $w_1\dots w_{l(w)}=(x_I^{-1}z_I)^{-1}v_Iv'\in P$.  Thus $(x')^{-1}z'\in P$ and $x'\leq z'$, as claimed.  

It follows from a similar argument that also $y'\leq z'$. 
 
Since $I$ is an initial vertex on $x$ or $y$ we have $l(x')+l(y')<l(x)+l(y)$.
By the induction hypothesis, $x'\vee y'$ exists and the vertices on $x'\vee y'$ are the vertices on $x'$ and $y'$. 

We claim that  the right-hand-side 
\begin{equation*}
(x_I\vee y_I)(x'\vee y')
\end{equation*}
of \eqref{eq-algorithm}
is a least upper bound for $x$ and $y$.  We first show that $x,y\leq (x_I\vee y_I)(x'\vee y')$.  We  again consider the two cases arising from  \cite[Lemma~11]{CL1}: firstly, if $x_I=z_I$, then $x_I\vee y_I=z_I=x_I$. Secondly, if $x_I<z_I$ then $I$ is adjacent to the vertices of $x'$. Thus
\begin{align*}
x^{-1}(x_I\vee y_I)(x'\vee y')&=(x')^{-1}x_I^{-1}(x_I\vee y_I)(x'\vee y')\\
&=\begin{cases}(x')^{-1}(x'\vee y')&\text{if $x_I=z_I$}\\
x_I^{-1}(x_I\vee y_I)(x')^{-1}(x'\vee y')&\text{if $x_I<z_I$},
\end{cases}
\end{align*}
which in both cases is in $P$.  Thus $x\leq (x_I\vee y_I)(x'\vee y')$, and similarly $y\leq (x_I\vee y_I)(x'\vee y')$. 
Thus $(x_I\vee y_I)(x'\vee y')$ is a common upper bound for $x$ and $y$, as claimed.

Next, suppose that $w\in P$ is a common upper bound for $x$ and $y$. We need to show that  $w\geq (x_I\vee y_I)(x'\vee y')$.  Write $w=w_Iw'$.  Since $x\leq w$, by  \cite[Lemma~11]{CL1}, either $x_I=w_I$ or $I$ is adjacent to every vertex on $x'$, and similarly for $y$. As above, we have   $x', y'\leq w'$. 
 
 First, suppose that $x_I=w_I$ or $y_I=w_I$. Then 
\begin{align*}
\big((x_I\vee y_I)(x'\vee y')\big)^{-1}w&=(x'\vee y')^{-1}(x_I\vee y_I)^{-1}w_Iw'\\
&=(x'\vee y')^{-1}w_I^{-1}w_Iw'=(x'\vee y')^{-1}w'\in P.
\end{align*}
Thus  $(x_I\vee y_I)(x'\vee y')\leq w$ as needed.

Second, suppose that  $x_I<w_I$ and  $y_I< w_I$. Recall that  the vertices  on $x'$ and $y'$ are adjacent to $I$, and hence so are the vertices on  $x'\vee y'$ and $(x'\vee y')^{-1}$. Thus 
\begin{align*}
\big((x_I\vee y_I)(x'\vee y')\big)^{-1}w
&=(x'\vee y')^{-1}\big((x_I\vee y_I)^{-1}w_I\big)w'\\\
&=\big((x_I\vee y_I)^{-1}w_I\big)\big((x'\vee y')^{-1}w'\big);
\end{align*}
this is in $P$ because both $(x_I\vee y_I)^{-1}w_I$ and $(x'\vee y')^{-1}w'$ are in $P$. Thus $(x_I\vee y_I)(x'\vee y')\leq w$ and hence is a least upper bound for $x$ and $y$, as claimed. 
We have proved that $(G,P)$ is a weak quasi-lattice, and that if $I$ is an initial vertex on $x$ or $y$, then \eqref{eq-algorithm} holds.
On the other hand,  if $I$ is not an initial vertex of $x$ or $y$, then $x_I=e=y_I$, and  \eqref{eq-algorithm} holds trivially.
\end{proof}

Let  $\Gamma_{I\in\Lambda} (G_I, P_I)$ be a graph product of partially ordered groups $(G_I, P_I)$. By the universal property of the free product there is a unique group homomorphism $\phi: *_\Lambda G_I \to \oplus_{I\in\Lambda}G_I$ extending the inclusion of $G_I$ in $*_\Lambda G_I$.  Since $\phi(G_I)$ and $\phi(G_J)$ commute if $I\neq J$, it follows that $\phi$ factors through the quotient $\Gamma_{I\in\Lambda} G_I$. Thus we have a  homomorphism
\begin{equation}\label{defn-phi}
\phi: \Gamma_{I\in\Lambda} (G_I, P_I)\to\bigoplus_{I\in\Lambda}(G_I, P_I),
\end{equation}
also denoted by $\phi$,  which extends the inclusion of $G_I$ in $\Gamma_{I\in\Lambda} G_I$.

 Let $I\in\Lambda$ and for $x\in P$ write $x=x_Ix'$. Then $\phi(x)=\phi(x_I)\phi(x')=x_I\phi(x')$ and $\phi(x)_I=x_I\phi(x')_I$. 

Now suppose that $(G,P)=\Gamma_{I\in\Lambda} (G_I, P_I)$ is  a graph product of partially  ordered groups $(G_I, P_I)$. It is proved in \cite[Proposition~19]{CL1} that if $(G,P)$ is  a quasi-lattice ordered group, then $\phi$ is a controlled map with $\ker\phi\cap P=\{e\}$. We now prove the analogous result when $(G,P)$ is only a weakly quasi-lattice ordered group.

\begin{prop}\label{prop graph controlled map}  Let $(G,P)=\Gamma_{I\in\Lambda} (G_I, P_I)$ be a graph product of  weakly quasi-lattice ordered groups. Let $\phi: \Gamma_{I\in\Lambda} (G_I, P_I)\to\bigoplus_{I\in\Lambda}(G_I, P_I)$ be the homomorphism defined at \eqref{defn-phi}.  Let $x,y\in P$ such that $x\vee y<\infty$. Then
\begin{enumerate}
\item\label{prop-controlled-a} $\phi(x)\vee\phi(y)=\phi(x\vee y)$, and 
\item\label{prop-controlled-b}  $\phi(x)=\phi(y)\Longrightarrow x=y$. 
\end{enumerate}
In particular, $\phi$ is a controlled map.
\end{prop}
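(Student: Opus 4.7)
The plan is to establish (a) and (b) by simultaneous induction on $n = l(x) + l(y)$, feeding in the recursive formula for least upper bounds provided by Theorem~\ref{thm graph}. The base case $n = 0$ forces $x = y = e$ and both claims are immediate. For the induction step, pick any vertex $I \in \Lambda$ that is initial on at least one of $x, y$, and write $x = x_I x'$, $y = y_I y'$ as in the proof of Theorem~\ref{thm graph}. That proof already gives $x' \vee y' < \infty$ and $x \vee y = (x_I \vee y_I)(x' \vee y')$, so applying the homomorphism $\phi$ yields
\[
\phi(x \vee y) = (x_I \vee y_I) \cdot \phi(x' \vee y'),
\]
where $x_I \vee y_I$ is viewed inside the $I$-th summand of $\bigoplus_I (G_I, P_I)$.

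For part (a), least upper bounds in the direct sum are computed componentwise, and for $J \ne I$ one has $\phi(x)_J = \phi(x')_J$ and $\phi(y)_J = \phi(y')_J$, so the induction hypothesis applied to $(x', y')$ takes care of the $J \ne I$ components. On the $I$-th component the task reduces to showing
\[
(x_I \phi(x')_I) \vee (y_I \phi(y')_I) = (x_I \vee y_I)(\phi(x')_I \vee \phi(y')_I) \quad \text{in } G_I.
\]
To do this I will invoke \cite[Lemma~11]{CL1} for each of $x \le x \vee y$ and $y \le x \vee y$: it gives that either $x_I = (x \vee y)_I = x_I \vee y_I$, or else all vertices on $x'$ are adjacent to $I$, so that $x'$ contains no $G_I$-generator and $\phi(x')_I = e$ (and symmetrically for $y$). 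A short case analysis on the resulting dichotomies, using left-invariance in $P_I$ and the implication $y_I \le x_I \Rightarrow y_I \le x_I \phi(x')_I$, handles each situation and confirms the identity.

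For part (b), assume $\phi(x) = \phi(y)$ and rerun the same case analysis on the $I$-th component. The only potentially dangerous case is $x_I = (x \vee y)_I$ strictly larger than $y_I$ (or its mirror image), because there the relation $x_I \phi(x')_I = y_I$ places $\phi(x')_I = x_I^{-1} y_I$ in $P_I \cap P_I^{-1} = \{e\}$, which forces $x_I = y_I$ and rules out the strict inequality. In all other cases $x_I = y_I$ is read off directly. With $x_I = y_I$ established, cancel to get $\phi(x') = \phi(y')$, and since $x' \vee y' < \infty$ the induction hypothesis gives $x' = y'$, and hence $x = y$.

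Finally, I read off that $\phi$ is a controlled map in the sense of Definition~\ref{def-controlled}. Condition (\ref{cm-1-new}) is part (a). For condition (\ref{cm-2-new}), note that if $y \le x$ in $P$ and $\phi(y) = \phi(x) = q$, then $x \vee y = x < \infty$, so (b) gives $y = x$; thus every element of $\phi^{-1}(q) \cap P$ is minimal and $\Sigma_q = \phi^{-1}(q) \cap P$, making (\ref{cm-3-new}) trivial. Condition (\ref{cm-4-new}) is exactly (b) rephrased: $\sigma, \tau \in \Sigma_q$ with $\sigma \vee \tau < \infty$ satisfy $\phi(\sigma) = \phi(\tau)$, so $\sigma = \tau$. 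The main subtlety throughout is the graph-theoretic bookkeeping on the $I$-th component; without the structural input from \cite[Lemma~11]{CL1} forcing $\phi(x')_I = e$ whenever $x_I$ is strictly below $(x \vee y)_I$, neither the cancellation step in (b) nor the key identity in (a) would go through.
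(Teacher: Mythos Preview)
Your argument is correct and uses the same structural inputs as the paper (the recursive formula of Theorem~\ref{thm graph} and the dichotomy from \cite[Lemma~11]{CL1}), but the organisation differs in two places worth noting.

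For part~(\ref{prop-controlled-a}) you compute coordinatewise in $\bigoplus_I G_I$, reducing the $I$-th coordinate to the identity $(x_I\vee y_I)(\phi(x')_I\vee\phi(y')_I)=(x_I\phi(x')_I)\vee(y_I\phi(y')_I)$ and disposing of it by the four-case split coming from the Lemma~11 dichotomy. The paper instead stays in the graph product and argues globally: in the case $x_I=x_I\vee y_I$ and $y_I\neq e$ it uses left-invariance to rewrite $x\vee y=y_I(y_I^{-1}x_Ix'\vee y')$, which drops the total length and lets induction apply directly without isolating the $I$-coordinate. Both routes work; yours is more bookkeeping-heavy but perhaps more transparent about where each coordinate lands. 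One small point: you write ``$x_I=(x\vee y)_I=x_I\vee y_I$'' in the first branch of the dichotomy. The equality $(x\vee y)_I=x_I\vee y_I$ is not obvious a priori, but it does follow in that branch from $x_I\le x_I\vee y_I\le (x\vee y)_I=x_I$, so the statement is fine as written.

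For part~(\ref{prop-controlled-b}) the paper avoids induction entirely: writing $x\vee y=xu=yv$ and applying~(\ref{prop-controlled-a}) gives $\phi(x)=\phi(x\vee y)=\phi(x)\phi(u)$, so $\phi(u)=e$; then the observation $u_I\le\phi(u)_I$ for every $I$ forces $u=e$, and likewise $v=e$. This is shorter than your inductive case analysis, though yours is also valid.
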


\begin{proof} We first observe that $\phi$ is order-preserving. Suppose that $g,h\in G$ with $g\leq h$. Then $h^{-1}g\in P$ and $\phi(h)^{-1}\phi(g)=\phi(h^{-1}g)\subseteq\phi(P)=\oplus P_I$. Thus $\phi(g)\leq \phi(h)$.

For  (\ref{prop-controlled-a}) we induct on $l(x)+l(y)$. When $l(x)+l(y) = 0$ we have $x=y=e$ and $\phi(x)\vee\phi(y)=e=\phi(e)=\phi(x\vee y)$.  Let $n\geq 0$ and  assume that if $v, w\in  P$ with $v\vee w<\infty$ and $l(v)+l(w)\leq n$, then $\phi(v)\vee\phi(w)=\phi(v\vee w)$. 

Suppose that $x,y\in P$ with $x\vee y<\infty$ and $l(x)+l(y)=n+1$. Let $I\in\Lambda$ be an initial vertex on $x$. Write $x=x_Ix'$ and $y=y_Iy'$, and notice that $l(x')<l(x)$. 
We  again consider the cases arising from  \cite[Lemma~11]{CL1}. First, suppose that $x_I<x_I\vee y_I$ and that $y_I<x_I\vee y_I$. Then the vertices on $x'$ and $y'$ are adjacent to $I$.  We have 
\begin{align*}
\phi(x\vee y)&=\phi\big((x_I\vee y_I)(x'\vee y')\big) \quad\text{(by Theorem~\ref{thm graph})}\\
&=\phi(x_I\vee y_I)\phi(x'\vee y')\\
&=\phi(x_I\vee y_I)\big(\phi(x')\vee \phi(y')\big)\quad\text{(by the induction hypothesis)}\\
&=\big(\phi(x_I)\vee \phi(y_I)\big)\big(\phi(x')\vee \phi(y')\big).\\
\intertext{
Since the vertices on $\phi(x')$ and $\phi(y')$ are the vertices on $x'$ and $y'$, and $I$ is adjacent to these, we get}
\phi(x\vee y)&=\big(\phi(x_I)\vee \phi(y_I)\big)\vee \big(\phi(x')\vee \phi(y')\big)\\
&=\big(\phi(x_I)\vee \phi(x')\big)\vee  \big(\phi(y_I)\vee  \phi(y')\big)\\
&=\phi(x_I)  \phi(x') \vee \phi(y_I)  \phi(y')\\
&=\phi(x)\vee\phi(y). 
\end{align*}

Second, suppose that $x_I=x_I\vee y_I$ or that $y_I=x_I\vee y_I$. We assume, without loss of generality, that  $x_I=x_I\vee y_I$. 
If $y_I\neq e$, then $l(y')<l(y)$ and 
\begin{align*}
\phi(x\vee y)&=\phi(x_Ix'\vee  y_Iy')\\
&=\phi\big(y_I(y_I^{-1}x_Ix'\vee y')\big)\quad\text{(by left invariance)}\\
&=\phi(y_I)\phi(y_I^{-1}x_Ix'\vee y')\\
&=\phi(y_I)\big(\phi(y_I^{-1}x_Ix')\vee \phi(y')\big) \quad\text{(by the induction hypothesis)}\\
&=\phi(x_Ix')\vee \phi(y_Iy')\\
&=\phi(x)\vee\phi(y).
\end{align*}
 If $y_I=e$, then $y=y'$. It follows from \cite[Lemma~11]{CL1} applied to $y\leq x\vee y$ that   $I$ is adjacent to the vertices of $y$. Now
\begin{align*}
\phi(x\vee y)&=\phi\big((x_I\vee y_I)(x'\vee y')\big)\\
&=\phi\big((x_I )(x'\vee y)\big)\\
&=\phi(x_I)\big(\phi(x')\vee\phi(y)\big) \quad\text{(by the induction hypothesis, since $l(x')<l(x)$)}\\
&=\phi(x_I) \phi(x')\vee\phi(x_I)\phi(y)\\
&=\phi(x)\vee\phi(x_I)\vee\phi(y)\\
&=\phi(x)\vee\phi(y)
\end{align*}
since $\phi(x_I)\leq \phi(x)$. 
We have now proved (\ref{prop-controlled-a}). 

For (\ref{prop-controlled-b}), suppose that $\phi(x)=\phi(y)$.  Write $x\vee y=xu=yv$ where $u, v\in P$.  By~(\ref{prop-controlled-a}) we have
\[\phi(x)=\phi(x)\vee\phi(y)=\phi(x\vee y)=\phi(x)\phi(u).
\]
Thus $\phi(u)=e$. But  $u_I\leq\phi(u)_I$ for all $I\in\Lambda$. So $\phi(u)=e$ implies $u=e$. Similarly, $v=e$ and hence $x=y$.  This gives~(\ref{prop-controlled-b}), and it follows that $\phi$ is a controlled map. 
\end{proof}

\begin{cor}
 Let $(G,P)=\Gamma_{I\in\Lambda} (G_I, P_I)$ be a graph product of  weakly quasi-lattice ordered groups.  Suppose that  $G_I$ is an amenable  group for every $I\in\Lambda$. Then
 $(G,P)$ is  amenable as a weakly quasi-lattice ordered group and   $\Cst(G, P)$ is nuclear. 
\end{cor}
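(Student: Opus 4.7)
The plan is to apply Theorem~\ref{T:nuclear} to the controlled map produced in Proposition~\ref{prop graph controlled map}. Concretely, view $\oplus_{I\in\Lambda}(G_I,P_I)$ as a weak quasi-lattice with coordinate-wise order, and consider the canonical homomorphism
\[
\phi\colon (G,P)=\Gamma_{I\in\Lambda}(G_I,P_I)\to \bigoplus_{I\in\Lambda}(G_I,P_I)
\]
defined at \eqref{defn-phi}. By Proposition~\ref{prop graph controlled map}, $\phi$ is a controlled map in the sense of Definition~\ref{def-controlled}, and hence also in the sense of Definition~\ref{defn-less-controlled}.

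Next I would verify the two remaining hypotheses of Theorem~\ref{T:nuclear}. First, the direct sum $\bigoplus_{I\in\Lambda} G_I$ is a direct limit of finite direct products of the amenable groups $G_I$, and so is amenable. Second, I claim $\ker\phi\cap P=\{e\}$: if $x\in P$ satisfies $\phi(x)=e$, then $x$ and $e$ trivially have $x$ as a common upper bound so $x\vee e<\infty$; since $\phi(x)=e=\phi(e)$, item~(\ref{prop-controlled-b}) of Proposition~\ref{prop graph controlled map} forces $x=e$. Consequently $\Cst(\ker\phi,\ker\phi\cap P)\cong\CC$, which is nuclear.

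All hypotheses of Theorem~\ref{T:nuclear} are therefore in place, and we conclude that $\Cst(G,P)$ is nuclear and that $(G,P)$ is amenable as a weakly quasi-lattice ordered group. There is essentially no obstacle here: the work has already been done in Theorem~\ref{thm graph} and Proposition~\ref{prop graph controlled map}, and the only minor point to check is that a restricted direct sum of amenable groups is amenable, which is standard.
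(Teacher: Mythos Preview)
Your proof is correct and follows essentially the same approach as the paper: use Proposition~\ref{prop graph controlled map} to obtain the controlled map $\phi$ into the amenable group $\bigoplus_{I\in\Lambda} G_I$, observe that $\ker\phi\cap P=\{e\}$ so that $\Cst(\ker\phi,\ker\phi\cap P)\cong\CC$ is nuclear, and then apply Theorem~\ref{T:nuclear}. You supply a little more detail than the paper does (in particular, you justify $\ker\phi\cap P=\{e\}$ explicitly via item~(\ref{prop-controlled-b})), but the argument is the same.
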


\begin{proof} Since each $G_I$ is amenable, so is the direct sum $\oplus G_I$. Now by Proposition~\ref{prop graph controlled map}, $\phi$ is a controlled map into an amenable group. Since $\ker\phi\cap P=\{e\}$, $\Cst(G, P)$ is nuclear and $(G,P)$ is amenable by Theorem~\ref{T:nuclear}. 
\end{proof}

\section{Amenability}\label{sec-amenability}
Recall from \S2 that $(G,P)$ is amenable in the sense of Nica if the Toeplitz representation $T:P\to B(\ell^2(P))$ induces an isomorphism $\pi_T: \Cst(G,P)\to \Tt(G, P)$.    We can now extend the amenability theorem of \cite{aHRT} to weak quasi-lattices admitting a controlled map in the sense of Definition~\ref{defn-less-controlled}. The proof ideas are the ones from \cite[Theorem~3.2]{aHRT} adjusted to the new definition. Since nuclearity of $\Cst(G,P)$ implies that $(G,P)$ is amenable by  \cite[Theorem~6.42]{Li-book}, Theorem~\ref{thm-amenability} is of interest only when we don't already know that $\Cst(G,P)$ is nuclear.

\begin{thm}\label{thm-amenability}  Let $(G,P)$ and $(K,Q)$ be weakly quasi-lattice ordered groups.
Suppose that $\mu\colon(G,P)\to (K,Q)$ is a controlled map in the sense of Definition~\textup{\ref{defn-less-controlled}}, that $K$ is amenable and that $(\ker\mu, \ker\mu\cap P)$ is amenable. Then  $(G,P)$ is amenable. 
\end{thm}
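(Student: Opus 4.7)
My plan is to adapt the proof of Theorem~\ref{thm-nuclearity2} to obtain faithfulness of the Nica conditional expectation in place of nuclearity. By the characterisation of amenability recalled at the end of~\S\ref{sec-background}, $(G,P)$ is amenable if and only if the Nica expectation $E\colon \Cst(G,P)\to \clsp\{w_xw_x^*:x\in P\}$ is faithful. Since $K$ is amenable, the conditional expectation $\Psi_\mu$ from~\eqref{expectation-from-mu} is faithful, and a direct check on spanning elements shows that $E=E_0\circ\Psi_\mu$, where $E_0\coloneqq E|_{\Cst(G,P)^{\delta_\mu}}$. Hence it suffices to prove that $E_0$ is faithful on $\Cst(G,P)^{\delta_\mu}$.

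I would then invoke the structural decomposition from Proposition~\ref{prop-structure-fpa} to write $\Cst(G,P)^{\delta_\mu}=\varinjlim_{I\in\Ii}C_I$, reducing the problem to showing that $E_0|_{C_I}$ is faithful for each finite $\vee$-closed $I\subseteq Q$, which I would establish by induction on $|I|$. For the base case $|I|=1$, so $C_I=B_k=\varinjlim_n B_{k,n}$ with $B_{k,n}\cong \Kk(\ell^2(\Lambda_k))\otimes B_e$. By Lemma~\ref{lem-subalgebra}(\ref{lem-subalgebra-a}), which uses amenability of $(\ker\mu,\ker\mu\cap P)$, we have $B_e\cong \Cst(\ker\mu,\ker\mu\cap P)$, and under this identification $E_0|_{B_{k,n}}$ becomes the tensor product $\mathrm{diag}_{\Lambda_k}\otimes E^{\ker}$ of the diagonal projection on $\Kk(\ell^2(\Lambda_k))$ with the Nica expectation $E^{\ker}$ on $\Cst(\ker\mu,\ker\mu\cap P)$. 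Amenability of the kernel gives $E^{\ker}$ faithful; together with nuclearity of $\Kk(\ell^2(\Lambda_k))$ and the faithfulness of the diagonal, the tensor product expectation is faithful on each $B_{k,n}$ and, passing to the direct limit, also on $B_k$.

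For the inductive step with $|I|=n+1$, I would pick a minimal element $m\in I$ and set $J=I\setminus\{m\}$. By Proposition~\ref{prop-structure-fpa}(\ref{prop-structure-fpa-bii}), $C_J$ is a closed two-sided ideal of $C_I$ with $C_I=B_m+C_J$ and $C_I/C_J\cong B_m/(B_m\cap C_J)$. The inductive hypothesis yields $E_0|_{C_J}$ faithful and the base case yields $E_0|_{B_m}$ faithful; the standard fact that a conditional expectation restricting faithfully to a closed ideal and inducing a faithful expectation on the quotient is itself faithful then reduces the problem to verifying that $E_0$ descends to a faithful expectation on the quotient $B_m/(B_m\cap C_J)$, and this is the main obstacle.

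To handle this obstacle I would exploit the minimality of $m$ in the $\vee$-closed set $I$: for every $k\in J$ we have $k\nleq m$, since $k\leq m$ with $k\ne m$ would mean $k<m$, contradicting minimality. In the Toeplitz representation on $\ell^2(P)$, this implies that for any $x\in P$ with $\mu(x)=m$ and any $p,q\in P$ with $\mu(p)=\mu(q)\in J$, the operator $T_pT_q^*$ annihilates $e_x$, because $T_q^*e_x\ne 0$ would require $x\geq q$ and hence $m\geq\mu(q)\in J$, contrary to minimality. This Toeplitz analysis should yield the geometric separation $\clsp\{w_xw_x^*:\mu(x)=m\}\cap C_J=\{0\}$, from which the faithfulness of $E_0|_{B_m}$ transfers to faithfulness of the induced expectation on $B_m/(B_m\cap C_J)$, completing the induction and hence the proof.
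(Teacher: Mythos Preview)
Your reduction $E=E_0\circ\Psi_\mu$ and your use of Proposition~\ref{prop-structure-fpa} match the paper's opening moves, and the identification of $E_0|_{B_{k,n}}$ with $\mathrm{diag}_{\Lambda_k}\otimes E^{\ker}$ under $B_{k,n}\cong\Kk(\ell^2(\Lambda_k))\otimes B_e$ is correct. The genuine gap is the phrase ``passing to the direct limit'': faithfulness of a conditional expectation does \emph{not} in general survive inductive limits. Knowing $E_0|_{B_{k,n}}$ faithful for every~$n$ does not give $E_0|_{B_k}$ faithful, and knowing $E_0|_{C_I}$ faithful for every $I\in\Ii$ does not give $E_0$ faithful on $\varinjlim_I C_I$. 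If $a\in B_k$ is positive with $E_0(a)=0$ and $a_n\to a$ with positive $a_n\in B_{k,n}$, you only obtain $E_0(a_n)\to 0$, not $E_0(a_n)=0$, so you cannot conclude $a_n\to 0$ without a uniform estimate relating $\|a_n\|$ to $\|E_0(a_n)\|$---and none is available here.

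The paper sidesteps this by proving faithfulness of the Toeplitz $*$-homomorphism $\pi_T$ on $\Cst(G,P)^{\delta_\mu}$ instead: injectivity of a $*$-homomorphism is equivalent to being isometric, and isometry \emph{does} pass to closures of increasing unions. Lemma~\ref{lem-isometric} shows $\pi_T(\cdot)|_{H_k}$ is isometric on each $B_{k,n,F}$ (using amenability of the kernel through Lemma~\ref{lem-subalgebra}(\ref{lem-subalgebra-a}) and the matrix-unit description), hence on $B_k$. Proposition~\ref{prop-Toeplitz-rep-faithful} then runs precisely your minimality observation---$k\not\leq l$ forces $\pi_T(B_k)H_l=\{0\}$---to peel off the summands $\pi_T(a_k)$ of an element of $C_I$ one minimal level at a time, giving $\pi_T$ isometric on each $C_I$ and therefore on the limit. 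Finally the always-faithful diagonal expectation $\Delta$ on $B(\ell^2(P))$ and the intertwining $\Delta\circ\pi_T=\pi_T\circ E$ convert faithfulness of $\pi_T$ on the fixed-point algebra into faithfulness of $E$ there. Your Toeplitz remark in the last paragraph is thus exactly the right ingredient; it just needs to feed a $*$-homomorphism argument rather than a direct-limit-of-expectations argument.
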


For $k\in Q$ we consider the subspaces
\[
H_k\coloneqq\clsp\{e_{s_n^\lambda \alpha}\colon \lambda\in\Lambda_k, n\in\NN, \alpha\in\ker\mu\cap P\}.
\]
Notice that $H_k=\ell^2(\mu^{-1}(k)\cap P)$, and in particular, that $H_e=\ell^2(\ker\mu\cap P)$.

\begin{lem}\label{lem-isometric} Let $(G,P)$ and $(K,Q)$ be weakly quasi-lattice ordered groups.
Suppose that $\mu\colon(G,P)\to (K,Q)$ is a controlled map in the sense of Definition~\textup{\ref{defn-less-controlled}}. Suppose that $(\ker\mu,\ker\mu\cap P)$ is amenable. 
Then $\pi_T(\cdot)|_{H_k}$ is isometric on $B_k$ for all  $k\in Q$. 
\end{lem}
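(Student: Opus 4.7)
The plan is to exploit the inductive-limit structure of $B_k$ from Proposition~\ref{prop-structure-fpa} and reduce to the amenability of $(\ker\mu,\ker\mu\cap P)$. First, I would verify that $H_k=\ell^2(\mu^{-1}(k)\cap P)$ is invariant under $\pi_T(B_k)$: for $p,q\in P$ with $\mu(p)=\mu(q)=k$ and $y\in \mu^{-1}(k)\cap P$, if $T_pT_q^*e_y\neq 0$ then $y=qy'$ with $y'\in \ker\mu\cap P$, and $T_pT_q^*e_y=e_{py'}$ with $\mu(py')=k$. So $\pi_T(\cdot)|_{H_k}\colon B_k\to B(H_k)$ is a well-defined contractive $*$-homomorphism, and it suffices to prove the reverse inequality $\|b\|\leq \|\pi_T(b)|_{H_k}\|$ on each $B_{k,n}$ of Proposition~\ref{prop-structure-fpa}.

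Fix $n\in\NN$ and consider the closed subspace
\[
W_n:=\bigoplus_{\lambda\in\Lambda_k} T_{s_n^\lambda}(H_e)\subseteq H_k,
\]
together with the unitary $U\colon W_n\to \ell^2(\Lambda_k)\otimes H_e$ sending $e_{s_n^\lambda \alpha}$ to $\delta_\lambda\otimes e_\alpha$. Using Lemma~\ref{lem-vee-empty}, I would check that $T_{s_n^\rho}^*e_{s_n^{\rho'}\alpha}$ vanishes unless $\rho=\rho'$ (in which case it equals $e_\alpha\in H_e$), since $s_n^\rho\leq s_n^{\rho'}\alpha$ would force $s_n^\rho\vee s_n^{\rho'}<\infty$. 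This shows that $W_n$ is invariant under $\pi_T(B_{k,n})$ and that, under $U$ and the isomorphism $B_{k,n}\cong \Kk(\ell^2(\Lambda_k))\otimes B_e$ from Proposition~\ref{prop-structure-fpa}(\ref{prop-structure-fpa-aii}), the restricted representation $\pi_T|_{W_n}$ is carried to $\mathrm{id}_{\Kk}\otimes (\pi_T|_{H_e})$.

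By Lemma~\ref{lem-subalgebra}(\ref{lem-subalgebra-a}), the amenability of $(\ker\mu,\ker\mu\cap P)$ gives $B_e\cong \Cst(\ker\mu,\ker\mu\cap P)$ via the universal representation, and tells us that $\pi_T|_{H_e}$ is faithful on $B_e$ (its image is the Toeplitz algebra of $\ker\mu\cap P$). Since $\Kk(\ell^2(\Lambda_k))$ is nuclear, $\mathrm{id}_{\Kk}\otimes (\pi_T|_{H_e})$ is faithful, hence isometric, on $\Kk\otimes B_e$. Therefore for every $b\in B_{k,n}$ we get
\[
\|b\|=\|\pi_T(b)|_{W_n}\|\leq \|\pi_T(b)|_{H_k}\|\leq \|b\|,
\]
so $\pi_T(\cdot)|_{H_k}$ is isometric on each $B_{k,n}$. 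Since $B_k=\overline{\bigcup_n B_{k,n}}$ by Proposition~\ref{prop-structure-fpa}(\ref{prop-structure-fpa-aiv}) and $\pi_T(\cdot)|_{H_k}$ is continuous, isometry propagates to all of $B_k$.

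The main obstacle is the bookkeeping in the middle step: I need to use Lemma~\ref{lem-vee-empty} carefully to show that $T_{s_n^\rho}^*T_{s_n^{\rho'}}=0$ whenever $\rho\neq\rho'$ (so that $W_n$ really splits as an orthogonal direct sum on which $B_{k,n}$ acts as matrix units tensored with the $B_e$-action on $H_e$), and to confirm that passing from $n$ to $n+1$ via the inclusion $B_{k,n}\subseteq B_{k,n+1}$ is compatible with the identifications $W_n\hookrightarrow W_{n+1}$.
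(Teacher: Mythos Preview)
Your argument is correct and follows essentially the same route as the paper: reduce to the inductive pieces $B_{k,n}$ of Proposition~\ref{prop-structure-fpa}, use the matrix-unit structure to factor through $\pi_T|_{H_e}$ on $B_e$, and invoke amenability of $(\ker\mu,\ker\mu\cap P)$ via Lemma~\ref{lem-subalgebra}(\ref{lem-subalgebra-a}) to see that $\pi_T|_{H_e}$ is faithful on $B_e$. The only packaging difference is that the paper works one level finer, on $B_{k,n,F}\cong M_F(\CC)\otimes B_e$ for finite $F\subseteq\Lambda_k$, and extracts matrix entries by hand: given $a=\sum_{\lambda,\rho\in F} w_{s_n^\lambda}a_{\lambda,\rho}w_{s_n^\rho}^*$ with $\pi_T(a)|_{H_k}=0$, one computes $T_{s_n^\eta}^*\pi_T(a)T_{s_n^\xi}=\pi_T(a_{\eta,\xi})$ and uses that $T_{s_n^\xi}$ carries $H_e$ into $H_k$ to conclude $\pi_T(a_{\eta,\xi})|_{H_e}=0$, hence $a_{\eta,\xi}=0$. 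This avoids the appeal to nuclearity of $\Kk$ and the explicit subspace $W_n$, but is otherwise the same computation you carry out in tensor form.

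One remark: the compatibility of $W_n\hookrightarrow W_{n+1}$ with $B_{k,n}\subseteq B_{k,n+1}$ that you flag as an obstacle is not actually needed. Once $\pi_T(\cdot)|_{H_k}$ is isometric on each $B_{k,n}$ separately, it is isometric on the dense union $\bigcup_n B_{k,n}$ and hence on its closure $B_k$; no coherence between the subspaces $W_n$ is required.
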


\begin{proof} We start with $k=e$. 
Let $\alpha, \beta\in \ker\mu\cap P$. Then $\mu(\alpha\beta)=e=\mu(\alpha^{-1}\beta)$. Thus $T_\alpha|: H_e\to H_e$ with $T_\alpha \epsilon_\beta =\epsilon_{\alpha\beta}=S_\alpha \epsilon_\beta$ and $T_\alpha^*\epsilon_\beta=S_\alpha^*\epsilon_\beta$. Thus
\[
\pi_T(\cdot)|_{H_e}: B_e\to \Tt(\ker\mu, \ker\mu\cap P)
\]
is well-defined. We write $S$ for the Toeplitz representation of $\ker\mu\cap P$ on $H_e=\ell^2(\ker\mu\cap P)$.  Since $\big(\ker\mu,\ker\mu\cap P\big)$ is amenable, $\pi_S$ is faithful, and  the restriction of $w$ to $\ker\mu\cap P$ induces an isomorphism $
\pi_{w|}: \Cst(\ker\mu, \ker\mu \cap P)\to B_e$ by  Lemma~\ref{lem-subalgebra}(\ref{lem-subalgebra-a}).
We have $\pi_S=\pi_T(\cdot)|_{H_e}\circ \pi_{w|}$. Now $\pi_T(\cdot)|_{H_e}$ has to be faithful on $B_e$ as well.

\medskip

Next we consider $k\neq e$.  By Proposition~\ref{prop-structure-fpa}, $B_k=\varinjlim B_{k,n}$ and $B_{k,n}=\varinjlim B_{k,n,F}$. Thus it suffices to show that $\pi_T(\cdot)|_{H_k}$ is isometric on $B_{k,n,F}$ for $n\in\NN$ and finite subsets $F$ of $\Lambda_k$.

We start by showing that $H_k$ is invariant for $\pi_T(B_{k,n,F})$ so that $\pi_T(\cdot)|_{H_k}$  makes sense. 
We have 
\begin{align}
B_{k,n,F}&=\lsp\{w_{s^\lambda_n} Dw^*_{s^{\rho}_n}\colon \lambda,\rho\in F \text{\ and\ }D\in B_e\}\label{no-closure}\\
&=\clsp\{w_{s^\lambda_n\alpha}   w^*_{s^{\rho}_n\beta}\colon \lambda,\rho\in F\text{ and }\alpha,\beta\in \ker\mu\cap P\}.
\notag
\end{align}
For a spanning element $w_{s^\lambda_n\alpha}   w^*_{s^{\rho}_n\beta}$ of $B_{k,n,F}$ and a spanning element $e_{s_m^\sigma \gamma}$ of $H_k$ we have
\[
\pi_T(w_{s^\lambda_n\alpha}   w^*_{s^{\rho}_n\beta})e_{s_m^\sigma \gamma}
=\begin{cases}
e_{s^\lambda_n\alpha(s^{\rho}_n\beta)^{-1}s_m^\sigma \gamma}
&\text{if $s^{\rho}_n\beta\leq s_m^\sigma \gamma$ (and then $\rho=\sigma$)}\\
0&\text{else.}
\end{cases}
\]
Since $\mu\big(\alpha(s^{\rho}_n\beta)^{-1}s_m^\sigma \gamma\big)=e$ we see that $\pi_T(w_{s^\lambda_n\alpha}   w^*_{s^{\rho}_n\beta})e_{s_m^\sigma \gamma}\in H_k$. It follows that $H_k$ is invariant for $\pi_T(\cdot)$.

Now suppose that $a\in B_{k,n,F}$ and $\pi_T(a)|_{H_k}=0$. Then $a=\sum_{\lambda,\rho\in F}w_{s_n^\lambda} a_{\lambda,\rho}w_{s_n^\rho}$ for some $a_{\lambda,\rho}\in B_e$. (Here we use the property that there is no closure at \eqref{no-closure}.)
Fix $\eta, \xi\in F$. Then
\[
T_{s_n^\eta}^*\pi_T(a)T_{s_n^\xi}=\pi_T(w_{s_n^\eta}^*aw_{s_n^\xi})=\pi_T(a_{\eta,\xi}).
\]
Since $T_{s_n^\xi}$ is an isometry from $H_e$ to $H_k$ and $\pi_T(a)|_{H_k}=0$, it follows that $\pi_T(a)T_{s_n^\xi}|_{H_e}=0$.
Thus $\pi_T(a_{\eta,\xi})|_{H_e}=0$. But we proved that $\pi_T(\cdot)|_{H_e}$ is faithful on $B_e$ above.
 Thus $a_{\eta,\xi}=0$. It follows that $a=0$. Thus $\pi_T(\cdot)|_{H_k}$ is faithful on $B_{k,n,F}$, and hence is isometric on $B_{k,n, F}$.  
\end{proof}

\begin{prop}\label{prop-Toeplitz-rep-faithful} Let $(G,P)$ and $(K,Q)$ be weakly quasi-lattice ordered groups.
Suppose that $\mu\colon(G,P)\to (K,Q)$ is a controlled map in the sense of Definition~\textup{\ref{defn-less-controlled}}. If $(\ker\mu,\ker\mu\cap P)$ is amenable, then $\pi_T$ is faithful on $\Cst(G,P)^{\delta_\mu}$.
\end{prop}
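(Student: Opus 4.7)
The plan is to exploit the direct-limit structure of the fixed-point algebra established in Proposition~\ref{prop-structure-fpa}: since $\Cst(G,P)^{\delta_\mu}=\varinjlim_{I\in\Ii}C_I$ and a $*$-homomorphism between $\Cst$-algebras is faithful if and only if it is isometric, it suffices to show that $\pi_T$ is isometric on each $C_I$. I would prove this by induction on $|I|$. The base case $|I|=1$ gives $C_I=B_k$ for some $k\in Q$, which is exactly the content of Lemma~\ref{lem-isometric}: $\pi_T(\cdot)|_{H_k}$ is isometric, hence faithful, on $B_k$.

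The key observation driving the inductive step is the following vanishing property. For a spanning element $w_pw_q^*\in B_k$ with $\mu(p)=\mu(q)=k$ and a basis vector $e_y\in H_{k'}$, the operator $T_pT_q^*$ sends $e_y$ to $e_{pq^{-1}y}$ when $q\leq y$ and to $0$ otherwise; but $q\leq y$ requires $q^{-1}y\in P$, so $\mu(q^{-1}y)=\mu(q)^{-1}\mu(y)=k^{-1}k'\in Q$, i.e.\ $k\leq k'$ in $Q$. By linearity and continuity, $\pi_T(B_k)|_{H_{k'}}=0$ whenever $k\not\leq k'$. For the inductive step, given $I\in\Ii$ with $|I|=n+1$, pick a minimal element $m\in I$ and set $J=I\setminus\{m\}$. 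From the proof of Proposition~\ref{prop-structure-fpa}, $C_J$ is a closed two-sided ideal in $C_I$ with $C_I=B_m+C_J$, and $|J|=n$. Since $m$ is minimal in $I$, no $k\in J$ satisfies $k\leq m$, so $\pi_T(C_J)|_{H_m}=0$ by the key observation. Now if $a\in C_I$ satisfies $\pi_T(a)=0$, choose any decomposition $a=a_m+a_J$ with $a_m\in B_m$ and $a_J\in C_J$. Restricting $\pi_T(a)$ to $H_m$ annihilates the $C_J$-term, leaving $\pi_T(a_m)|_{H_m}=0$; Lemma~\ref{lem-isometric} then forces $a_m=0$. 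Thus $a=a_J\in C_J$, and the inductive hypothesis applied to $C_J$ gives $a=0$.

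The main point requiring care is extending the vanishing observation from the spanning monomials $w_pw_q^*$ to the closed subalgebra $B_k$ and then to the finite sum $C_J$; this is a routine continuity argument using that $C_J=\sum_{k\in J}B_k$ by Proposition~\ref{prop-structure-fpa}(\ref{prop-structure-fpa-bii}). The non-uniqueness of the decomposition $a=a_m+a_J$ is not an obstacle, as any such decomposition is enough: the argument first kills $a_m$ using the invariant subspace $H_m$ and then reduces to the inductive hypothesis on $C_J$.
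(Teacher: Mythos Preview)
Your proposal is correct and follows essentially the same strategy as the paper: reduce to each $C_I$ via the direct limit, establish the vanishing $\pi_T(B_k)H_{k'}=0$ for $k\not\leq k'$, and then peel off minimal elements using Lemma~\ref{lem-isometric}. The only cosmetic difference is that you formalize the peeling-off as induction on $|I|$ with the decomposition $C_I=B_m+C_J$, whereas the paper iterates directly over minimal elements of $I$ using the finite sum $C_I=\sum_{k\in I}B_k$; the underlying argument is the same.
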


\begin{proof} By Proposition~\ref{prop-structure-fpa}, $\Cst(G,P)^{\delta_\mu}=\varinjlim_{I\in\Ii}C_I$. So it suffices to show that $\pi_T$ is isometric on each $C_I$. Let $a\in C_I$ such that $\pi_T(a)=0$. Since $C_I=\sum_{k\in I} B_k$, there exist $a_k\in B_k$ such that $a=\sum_{k\in I} a_k$. Then $0=\sum_{k\in I}\pi_T(a_k)$.

We claim that $k\not\leq l$ implies that $\pi_T(B_k)H_l=\{0\}$. We prove the contrapositive. Suppose that $\pi(B_k)H_l\neq \{0\}$. Then there exists $q\in \mu^{-1}(k)\cap P$ and a spanning vector $e_{s_n^\lambda \alpha}\in H_l$ such that $\pi_T(w_q^*)e_{s_n^\lambda \alpha}\neq 0$.  We have 
\[
\pi_T(w_q^*)e_{s_n^\lambda \alpha}=
\begin{cases}
e_{q^{-1}s_n^\lambda \alpha}&\text{if $q\leq s_n^\lambda \alpha$}\\
0&\text{else.}
\end{cases}
\]
Here $s_n^\lambda\in \mu^{-1}(l)\cap P$ and $\alpha\in\ker\mu\cap P$.
So $\pi_T(B_k)H_l\neq \{0\}$ implies  $k=\mu(q)\leq\mu (s_n^\lambda \alpha)=\mu (s_n^\lambda)=l$. This proves the claim.

Now let $l_1$ be a minimal element in $I$. Then
\[0=\big(\sum_{k\in I}\pi_T(a_k)\big)H_{l_1}=\pi_T(a_{l_1})H_{l_1}.
\]
Since $(\ker\mu,\ker\mu\cap P)$ is amenable, Lemma~\ref{lem-isometric} implies that $\pi_T(\cdot)|_{H_{l_1}}$ is isometric on $B_{l_1}$. Thus $a_{l_1}=0$. 
Now let $l_2$ be a minimal element of $I\setminus\{l_1\}$. Repeat the above argument
to get $a_{l_2} = 0$. Since $I$ is finite, we conclude that $a=0$. Thus $\pi_T$ is faithful on $C_I$, and hence is isometric on $C_I$. It follows that $\pi_T$ is isometric on $\Cst(G,P)^{\delta_\mu}$. 
\end{proof}

\begin{proof}[Proof of Theorem~\ref{thm-amenability}] We need to show that the conditional expectation \[E:\Cst(G,P)\to\clsp\{w_pw_p^*\colon p\in P\}\] is faithful. Since $K$ is amenable, the conditional expectation $\Psi_\mu\colon\Cst(G,P)\to \Cst(G,P)^{\delta_\mu}$ is faithful by \cite[Lemma~3.5]{aHRT}. We claim that it suffices to show that $E$ is faithful on $\Cst(G,P)^{\delta_\mu}$. To see this, let $a\in \Cst(G,P)$ and suppose that $E(a^*a)=0$. We observe that $E=E\circ \Psi_\mu$. Since $\Psi_\mu$ is positive, there exists $b\in \Cst(G,P)^{\delta_\mu}$ such that  $\Psi_\mu(a^*a)=b^*b$. Then
$0=E(a^*a)=E\circ \Psi_\mu(a^*a)=E(b^*b)$.
If $E$ is faithful on $\Cst(G,P)^{\delta_\mu}$, we get $b=0$. Then $\Psi_\mu(a^*a)=0$ implies $a=0$ because $\Psi_\mu$ is faithful. This proves the claim. 

Let $P_p$ be the orthogonal projection onto the subspace  $\lsp\{e_p\}$. There is a conditional expectation $\Delta$ on $B(\ell^2(P))$ such that $\Delta(T)=\sum_{p\in P}P_pTP_p$. It is easy to see that $\Delta$ is faithful and, by computing on generators, that
\[
\Delta\circ\pi_T=\pi_T\circ E.
\]
Now suppose that $b\in \Cst(G,P)^{\delta_\mu}$ such that $E(b^*b)=0$.  Then
$0=\pi_T(E(b^*b))=\Delta(\pi_T(b^*b))$. Since $\Delta$ is faithful, $\pi_T(b^*b)=0$. By Proposition~\ref{prop-Toeplitz-rep-faithful} the Toeplitz representation is faithful on $\Cst(G,P)^{\delta_\mu}$, and hence $b^*b=0$. Thus $b=0$, and $E$ is faithful on $\Cst(G,P)^{\delta_\mu}$. As mentioned above, this implies that $E$ is faithful.  
\end{proof}

\appendix
\section{Characterising nuclearity using the conditional expectation}

\begin{prop}\label{prop-iff-diagonal}
Let $(G,P)$ be a weakly quasi-lattice ordered group.   Then $\Cst(G, P)$ is nuclear if and only if, for every unital $\Cst$\nb-algebra $A$, the conditional  expectation $E_{A,\max}:A\otimes_\maxt \Cst(G, P)\to A\otimes \clsp\{w_p w_p^*:p\in P\}$ is faithful.
\end{prop}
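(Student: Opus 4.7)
My plan is to exploit the commutative diagram
\[
\begin{array}{ccc}
A \otimes_\maxt \Cst(G,P) & \xrightarrow{\;\pi_A\;} & A \otimes_\mint \Cst(G,P) \\
\downarrow E_{A,\maxt} & & \downarrow E_{A,\mint} \\
A \otimes_\maxt D & \xrightarrow{\;\pi_A^D\;} & A \otimes_\mint D,
\end{array}
\]
where $D\coloneqq\clsp\{w_pw_p^*:p\in P\}$ and $\pi_A, \pi_A^D$ are the canonical surjections. Before chasing, I would record three preliminary observations: the projections $w_pw_p^*$ pairwise commute by Nica covariance, so $D$ is abelian and hence nuclear, forcing $\pi_A^D$ to be an isomorphism; because $E$ is a completely positive conditional expectation, both $E_{A,\maxt}=\id_A\otimes_\maxt E$ and $E_{A,\mint}=\id_A\otimes_\mint E$ are conditional expectations, and the existence of the retract $E_{A,\maxt}$ forces $A \otimes_\maxt D$ to inject into $A \otimes_\maxt \Cst(G,P)$; and the diagram commutes, as one checks on elementary tensors $a \otimes w_pw_q^*$ and extends by continuity.

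For the direction ``$E_{A,\maxt}$ faithful for all $A$ $\Rightarrow$ $\Cst(G,P)$ nuclear'', I would take $x \in A \otimes_\maxt \Cst(G,P)$ with $\pi_A(x) = 0$ and chase $x^*x$ through the diagram. From $\pi_A^D(E_{A,\maxt}(x^*x)) = E_{A,\mint}(\pi_A(x^*x)) = 0$ and invertibility of $\pi_A^D$, I get $E_{A,\maxt}(x^*x) = 0$, and faithfulness then forces $x = 0$. Hence $\pi_A$ is injective for every unital $A$, which is the definition of nuclearity of $\Cst(G,P)$.

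For the reverse implication, nuclearity of $\Cst(G,P)$ implies amenability of $(G,P)$ by \cite[Theorem~6.42]{Li-book}, so the Toeplitz representation identifies $\Cst(G,P)$ with $\Tt(G,P) \subseteq B(\ell^2(P))$ and maps $E$ to the spatial expectation $E_T(T) = \text{SOT-}\sum_{p\in P} P_p T P_p$, where $P_p$ is the rank-one projection onto $\CC e_p$. Nuclearity also collapses $\otimes_\maxt$ with $\otimes_\mint$, so $E_{A,\maxt}$ corresponds to $\id_A \otimes_\mint E_T$. Faithfully representing $A$ on a Hilbert space $\mathcal{H}$, the latter becomes $T \mapsto \text{SOT-}\sum_p (1_{\mathcal{H}} \otimes P_p) T (1_{\mathcal{H}} \otimes P_p)$ inside $B(\mathcal{H} \otimes \ell^2(P))$; vanishing on $T^*T$ yields $T(1_{\mathcal{H}} \otimes P_p) = 0$ for every $p$ and hence $T = 0$. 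The main technical point is carefully aligning the various tensor products and the identification $\Cst(G,P) \cong \Tt(G,P)$ under nuclearity; the backward implication is by contrast a pure diagram chase.
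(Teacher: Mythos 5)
Your proof is correct. The forward-from-faithfulness direction (faithfulness of $E_{A,\maxt}$ for all $A$ implies nuclearity) is essentially the paper's argument: the paper writes $E_{A,\maxt}=E_{A,\mint}\circ q_A$ and reads off injectivity of $q_A$, which is exactly your diagram chase; your explicit remark that $D=\clsp\{w_pw_p^*\}$ is abelian, hence nuclear, so that $\pi_A^D$ is an isomorphism, is a point the paper leaves implicit when it writes $A\otimes D$ without decoration. The other direction is where you genuinely diverge. Both arguments start from nuclearity $\Rightarrow$ amenability of $(G,P)$ via \cite[Theorem~6.44]{Li-book} and from $q_A$ being an isomorphism, but then the paper proves an abstract lemma (Lemma~\ref{lem-expectation}): for \emph{any} faithful conditional expectation $E\colon C\to B$, the induced map $E_{A,\mint}$ is faithful, shown by turning $C$ into a Hilbert $B$\nb-module with inner product $\langle y_1,y_2\rangle=E(y_1^*y_2)$ and tensoring imprimitivity bimodules. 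You instead use amenability to pass to the Toeplitz representation and realise $\id_A\otimes_\mint E$ concretely as $S\mapsto\text{SOT-}\sum_p(1\otimes P_p)S(1\otimes P_p)$ on $B(\mathcal H\otimes\ell^2(P))$, where faithfulness is immediate from positivity of each summand and $\sum_p P_p=1$. The paper's route buys a reusable general fact about arbitrary faithful expectations and minimal tensor products; yours is more elementary and self-contained but is tied to the existence of a spatial implementation of $E$ by a family of orthogonal rank-one projections, which here is supplied exactly by amenability. Both are complete proofs of the proposition.
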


For the proof of Proposition~\ref{prop-iff-diagonal} we need the following lemma. 

\begin{lem}\label{lem-expectation} Let $A$ and  $C$ be $\Cst$\nb-algebras, and let $B$ be a $\Cst$\nb-subalgebra of $C$.  Suppose that  $E:C\to B$ is a conditional expectation. Then $\id\odot E: A\odot C\to A\odot B$  extends to  conditional expectations 
 \[
E_{A, \min}:A\otimes_\mint C\to A\otimes_\mint B\text{\ and \ } E_{A, \max}:A\otimes_\maxt C\to A\otimes_\maxt B.
\] 
If $E$ is faithful, then so is $E_{A, \min}$. 
\end{lem}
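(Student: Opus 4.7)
\emph{Existence.} By Tomiyama's theorem $E$ is completely positive and contractive. Tensoring a CP contractive map with $\id_A$ yields a CP contractive map on both the minimal and maximal tensor products, so $\id_A \odot E$ extends to CP contractive maps $E_{A,\mint}\colon A \otimes_\mint C \to A \otimes_\mint B$ and $E_{A,\maxt}\colon A \otimes_\maxt C \to A \otimes_\maxt B$. Since each extension acts as the identity on elementary tensors $a \otimes b \in A \odot B$, by linearity and continuity each is a contractive projection onto $A \otimes_\mint B$, respectively $A \otimes_\maxt B$. A second application of Tomiyama's theorem (contractive projections onto $C^*$-subalgebras are conditional expectations) then gives that $E_{A,\mint}$ and $E_{A,\maxt}$ are conditional expectations.

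\emph{Faithfulness.} Suppose $E$ is faithful and let $x \in A \otimes_\mint C$ satisfy $E_{A,\mint}(x^*x) = 0$. For each state $\varphi$ on $A$ the right slice map $R_\varphi \colon A \otimes_\mint C \to C$, determined on elementary tensors by $R_\varphi(a \otimes c) = \varphi(a)\, c$, is a well-defined CP contractive map. Computing on elementary tensors,
\[
R_\varphi\bigl(E_{A,\mint}(a \otimes c)\bigr) = \varphi(a)\, E(c) = E\bigl(R_\varphi(a \otimes c)\bigr),
\]
so by continuity $E \circ R_\varphi = R_\varphi \circ E_{A,\mint}$ on all of $A \otimes_\mint C$. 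Therefore $E\bigl(R_\varphi(x^*x)\bigr) = 0$, and since $R_\varphi(x^*x) \ge 0$, faithfulness of $E$ forces $R_\varphi(x^*x) = 0$ for every state $\varphi$ on $A$.

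To conclude, fix faithful representations $\pi_A\colon A \to B(H_A)$ and $\pi_C\colon C \to B(H_C)$, so that $\pi_A \otimes \pi_C$ is a faithful representation of $A \otimes_\mint C$ on $H_A \otimes H_C$. For any unit vectors $\xi_A \in H_A$, $\xi_C \in H_C$ with associated vector states $\varphi$ and $\psi$, we have
\[
(\varphi \otimes \psi)(x^*x) = \psi\bigl(R_\varphi(x^*x)\bigr) = 0,
\]
so $\bigl\| (\pi_A \otimes \pi_C)(x)(\xi_A \otimes \xi_C) \bigr\|^2 = 0$. As simple tensors are total in $H_A \otimes H_C$, the operator $(\pi_A \otimes \pi_C)(x)$ vanishes on a dense subspace and hence is zero; faithfulness of $\pi_A \otimes \pi_C$ then gives $x = 0$.

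The main technical point is the last separation step, which relies on both the spatial definition of $\otimes_\mint$ and the elementary observation that an operator vanishing on all simple tensors vanishes; the failure of an analogous statement for $\otimes_\maxt$ is precisely why the lemma only asserts faithfulness for the minimal tensor product.
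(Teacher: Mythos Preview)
Your proof is correct. The existence argument is essentially identical to the paper's: both invoke Tomiyama's theorem to get that $E$ is completely positive and contractive, extend $\id \odot E$ to both tensor products via the standard CP-tensor machinery, and then observe that the resulting contractive idempotents onto the subalgebras are conditional expectations.

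For faithfulness, however, you take a genuinely different route. The paper argues via Hilbert $\Cst$\nb-modules: it equips $Y = C$ with the $B$\nb-valued inner product $\langle y_1, y_2\rangle = E(y_1^* y_2)$, takes the external tensor product with the standard $A$\nb-$A$ imprimitivity bimodule, and then identifies the resulting inner product on $A \odot C$ with $E_{A,\min}(z^* z)$; definiteness of the imprimitivity-bimodule inner product then forces $z = 0$. Your argument instead uses slice maps to reduce to faithfulness of $E$ itself, and then separates points by evaluating against vector states coming from a concrete spatial realisation $\pi_A \otimes \pi_C$ of $A \otimes_\mint C$. Your approach is more elementary in that it avoids the imprimitivity-bimodule machinery and appeals only to the spatial definition of $\otimes_\mint$; the paper's approach, on the other hand, packages the argument into a single structural statement (the external tensor product of imprimitivity bimodules is again one) and makes the role of the $B$\nb-valued inner product $E(\,\cdot^{\,*}\,\cdot\,)$ more transparent. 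Both arguments make essential use of the spatial nature of $\otimes_\mint$, which is why neither extends to $\otimes_\maxt$ --- a point you note explicitly.
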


\begin{proof}
Since $E$ is a conditional expectation, it is a completely positive contractive map by \cite[Theorem~1.5.10]{B-O}. Now both  $\id$ and $E$ are completely positive maps, and so by \cite[Theorem~3.5.3]{B-O}, $\id\odot E$ extends to completely positive maps $E_{A,\min}$ and  $E_{A,\max}$ of norm $1$. Then $E_{A,\min}$  and  $E_{A,\max}$ are linear idempotents of norm $1$ that act as the identity map on $A\otimes_\mint B$ and $A\otimes_\maxt B$, respectively, and hence are conditional expectations.

Now suppose that $E$ is faithful. We will show that $E_{A, \min}$ is faithful. Let $X$ be the standard $A$-$A$ imprimitivity bimodule. Let $Y\coloneqq C$ be the right $B$-module with action $y\cdot b\coloneqq yb$ for all $y\in Y$ and $b\in B$. Since $E$ is faithful,  $\langle y_1\,,\, y_2\rangle \coloneqq E(y_1^*y_2)$ is a $B$-valued inner product and turns $Y$ into a right Hilbert $B$-module.  View $Y$ as a  $\Kk(Y)$-$B$ imprimitivity bimodule. By \cite[Proposition~3.36]{tfb} $X\odot Y$ completes to give an $A\otimes\Kk(Y)$-$A\otimes_\mint B$ imprimitivity bimodule with right inner product characterised by
$
\langle x_1\otimes y_1\,,\,  x_2\otimes y_2\rangle = \langle x_1\,,\, x_2\rangle \otimes \langle y_1\,,\, y_2\rangle 
$
for $x_1, x_2\in A$ and $y_1, y_2\in C$. In particular,
\begin{align*}
\langle x_1\otimes y_1\,,\,  x_2\otimes y_2\rangle =x_1^*x_2\otimes E(y_1^*y_2)&=\id\otimes E(x_1^*x_2\otimes y_1^*y_2)\\
&= E_{A,\mathrm{min}}(x_1^*x_2\otimes y_1^*y_2).
\end{align*}
It follows that for $z\in A\otimes_\mint C$ we have 
$
\langle z\,,\,  z\rangle =E_{A,\min}(z^*z).
$
Thus $E_{A,\min}(z^*z)=0$ implies $z=0$, and $E_{A,\min}$ is faithful.
\end{proof}

\begin{proof}[Proof of Proposition~\ref{prop-iff-diagonal}] 
Let  $A$ be a unital  $\Cst$\nb-algebra and write $q_A:A\otimes_\maxt \Cst(P)\to A\otimes_\mint \Cst(P)$ for the quotient map. By Lemma~\ref{lem-expectation},   $\id\odot E$ 
extends to conditional expectations 
\begin{gather*}
E_{A,\max}:A\otimes_\maxt \Cst(G, P)\to A\otimes \clsp\{w_p w_p^*:p\in P\}\\
E_{A,\min}:A\otimes_\mint \Cst(G, P)\to A\otimes \clsp\{w_p w_p^*:p\in P\},
\end{gather*}
and $E_{A,\min}$ is faithful if $E$ is. We have   $E_{A,\max}=E_{A,\min}\circ q_A$. 

First, suppose that $\Cst(G, P)$ is nuclear and fix a unital $\Cst$\nb-algebra $A$.  Since $(G,P)$ is a weak quasi-lattice, it follows from   \cite[Theorem~6.44]{Li-book} that $(G,P)$ is amenable. Thus $E$ is faithful, and then so is $E_{A,\min}$ by Lemma~\ref{lem-expectation}. Since $\Cst(G, P)$ is nuclear,  $q_A$ is injective.   Thus $E_{A,\max}=E_{A,\min}\circ q_A$ is faithful.

Second, suppose that, for every unital $\Cst$\nb-algebra $A$, the expectation $E_{A,\max}$ is faithful. Since $E_{A,\max}=E_{A,\min}\circ q_A$ we must have that $q_A$ injective. Thus $A\otimes_\maxt \Cst(G, P)$ is isomorphic to $A\otimes_\mint \Cst(G, P)$, and $\Cst(G, P)$ is nuclear by \cite[Lemma~B.42]{tfb}. 
\end{proof}

\subsection*{Acknowledgments} The authors are  grateful to the referee for their careful reading and useful comments.

\end{document}